\documentclass[11pt]{article}
\usepackage[margin=1in]{geometry}
\usepackage{comment}
\usepackage{graphicx} 
\usepackage{tikz}
\usepackage{authblk}
\usepackage{hyperref}
\usepackage{amsmath,amssymb,amsthm}
\usepackage{cleveref}
\usepackage{thmtools}
\usepackage{xcolor}
\usepackage{todonotes}
\usepackage{float}
\hypersetup{colorlinks=true,linkcolor=blue,filecolor=blue,citecolor=blue,urlcolor=blue}
\declaretheorem[name=Theorem, numberwithin=section]{theorem}
\declaretheorem[name=Lemma, sibling=theorem]{lemma}
\declaretheorem[name=Proposition, sibling=theorem]{proposition}
\declaretheorem[name=Definition, sibling=theorem]{definition}
\declaretheorem[name=Corollary, sibling=theorem]{corollary}
\declaretheorem[name=Conjecture, sibling=theorem]{conjecture}

\declaretheorem[name=Question, sibling=theorem]{question}
\declaretheorem[name=Claim, numbered=yes]{claim}

\declaretheorem[name=Observation, sibling=theorem]{observation}

\DeclareMathOperator{\diam}{diam}

\renewcommand{\hat}[1]{\widehat{#1}}
\newcommand{\say}[1]{``#1''}

\newcommand{\ora}[1]{\overrightarrow{#1}}
\definecolor{dark-green}{rgb}{0.2, 0.5, 0.2}

\renewenvironment{proof}{\par \noindent \textbf{Proof.} }{\hfill$\Box$\medskip}
\newenvironment{proofof}[1]{\par \noindent \textbf{Proof of #1.}
}{\hfill$\lozenge$\medskip}

\title{Inversion diameter and $2$-edge-colored homomorphisms\thanks{AR was supported by Polish National
Science Centre SONATA BIS-12 grant number 2022/46/E/ST6/00143.}}
\author[1]{Carmen Arana}
\author[2]{Thomas Bellitto}
\author[1,3]{Hector Buffière}
\author[4]{Quentin Chuet}
\author[5]{Théo Pierron}
\author[6]{Amadeus Reinald}

\affil[1]{Université Paris Cité, CNRS, IRIF, Paris, France. }
\affil[2]{Sorbonne Université, CNRS, LIP6, Paris, France}
\affil[3]{Centre d'Analyse et de Mathématique Sociales, CNRS, EHESS, Paris, France.}
\affil[4]{LISN, Université Paris-Saclay, Gif-sur-Yvette, France.}
\affil[5]{Univ Lyon, UCBL, CNRS, INSA Lyon, LIRIS, UMR5205, F-69622 Villeurbanne, France.} 
\affil[6]{University of Warsaw, Poland}
\affil[ ]{\small \textit{Email addresses:} \href{mailto:arana@irif.fr}{\tt{arana@irif.fr}},
\href{mailto:thomas.bellitto@lip6.fr}{\tt{thomas.bellitto@lip6.fr}}, \href{mailto:Hector.Buffiere@irif.fr}{\tt{buffiere@irif.fr}},  \href{mailto:quentin.chuet@lisn.fr}{\tt quentin.chuet@lisn.fr},
\href{mailto:theo.pierron@univ-lyon1.fr}{\tt theo.pierron@univ-lyon1.fr}, \href{mailto:reinald@mimuw.edu.pl}{\tt reinald@mimuw.edu.pl}
}

\date{}

\begin{document}

\maketitle
\begin{abstract}
    In an oriented graph, the \emph{inversion} of a subset of vertices $X$ is the operation reversing the direction of every arc with both endpoints in $X$. Given a graph $G$, the \emph{inversion distance} between two orientations $G$ is the minimum number of inversions transforming one into the other.
    The \emph{inversion diameter} $\diam(G)$ is the maximum such distance over all pairs of orientations of $G$.
    Through an equivalent formulation of inversions over $2$-edge-colorings of $G$, we introduce the use of homomorphism-universal 2-edge-colored graphs to obtain bounds on the inversion diameter of various classes of graphs.

    Our first result upper bounds the inversion diameter by a linear function of the acyclic chromatic number, improving on the previous quadratic dependency.
    We then consider the inversion diameter of planar graphs, exhibiting a lower bound of $6$, as well as new lower and upper bounds for those of a given girth, in particular settling the girth $7$ case.
    We then show that any triangle-free graph $G$ with maximum degree $\Delta$ satisfies $\diam(G) \leq \Delta+\log\Delta$, making progress on the conjecture of Havet et al. that $\diam(G) \leq \Delta$.
    Finally, we prove a general result about subdivisions: if a graph has inversion diameter $k$, any of its subdivisions has inversion diameter at most $k+\log k + 5$.
\end{abstract}

\section{Introduction}
Given an oriented graph $\ora G$, and a subset of vertices $X\subseteq V(\ora G)$, the {\em inversion} of $X$ is the operation reversing an arc between two vertices if and only if both belong to $X$.
Inversions were introduced by Belkhechine~\cite{belkhechine2009indecomposabilite} who investigated sequences of inversions turning an oriented graph acyclic.
For this purpose, inversions generalize feedback arc sets, as the deletion of a set of arcs has the same effect as the successive inversions of each pair of endpoints.
The {\em inversion number} of an oriented graph is the minimal number of inversions required to obtain an acyclic orientation.
The study of inversion number was initiated by Belkhechine et al.~\cite{BBBP}, who obtained estimates and structural obstructions for tournaments. It has gained a renewed interest recently, through both algorithmic and asymptotic considerations~\cite{alon, aubian, bang-jensenInversionNumberOriented2022a,duronkArc25,bangjensen2025makingorientedgraphacyclic}.

While the inversion number can be seen as the \say{distance} from an oriented graph towards \textsl{any} acyclic orientation, we are here interested in a finer measure.
Considering two orientations of the same graph, what is the minimum number of inversions required to obtain one from the other?
This question, which can be seen as a reconfiguration problem, was raised recently by Havet et al.~\cite{havet}.
Specifically, given a \textsl{labeled} graph $G$, and two orientations $\ora{G_1},\ora{G_2}$ of $G$, the \emph{(inversion) distance} between $\ora{G_1}$ and $\ora{G_2}$ is defined as the minimum number of inversions required to obtain $\ora{G_2}$ from $\ora{G_1}$.
We stress that all (oriented) graphs here are labeled, meaning that in the above, it is not sufficient to obtain a graph isomorphic to $\ora{G_2}$. Note also that this distance is indeed symmetric, as $\ora{G_1}$ can be obtained from $\ora{G_2}$ by the same sequence used from $\ora{G_1}$ to $\ora{G_2}$.
Then, the {\em inversion diameter}, denoted $\diam(G)$, is defined as the largest inversion distance between any two orientations of $G$.
The authors of~\cite{havet} investigated several structural aspects of inversion diameter, its relation to other parameters, as well as bounds for sparse classes of graphs.

An important result of~\cite{havet} is that the parameter $\diam$ is functionally equivalent to three (already tied) other parameters: the star chromatic number $\chi_s$, the acyclic chromatic number $\chi_a$ and the oriented chromatic number $\chi_o$.
\begin{theorem}[Theorem 1.2 in~\cite{havet}]\label{thm:havet-functional-equiv}
    The parameters $\diam,\chi_a,\chi_s,\chi_o$ are functionally equivalent, and in particular: $\diam \leq \frac{1}{3}(\chi_s^2+\chi_s+1), \frac{2}{3}(\chi_a^2+\chi_a+1), \chi_o^2-1$.
\end{theorem}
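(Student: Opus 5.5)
The plan is to work with the edge-coloring reformulation and treat the upper and lower bounds separately. Fix a labeled graph $G$ and two orientations $\ora{G_1},\ora{G_2}$, and call an edge \emph{red} if the two orientations disagree on it and \emph{blue} otherwise. A sequence of inversions $X_1,\dots,X_k$ is recorded by giving each vertex $v$ the vector $x_v\in\mathbb{F}_2^k$ with $(x_v)_t=1$ iff $v\in X_t$. The sequence turns $\ora{G_1}$ into $\ora{G_2}$ exactly when $\langle x_u,x_v\rangle=1$ on red edges and $0$ on blue edges. So $\diam(G)\le k$ holds iff every $2$-edge-coloring of $G$ admits such a labeling in $\mathbb{F}_2^k$.

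\textbf{Upper bounds.} The strategy is to map every $2$-edge-colored version of $G$ homomorphically (colors preserved) to one fixed target $H$ that depends only on the coloring parameter. If $H$ has a realizing labeling in $\mathbb{F}_2^k$, pulling that labeling back along the homomorphism gives $\diam(G)\le k$.

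For a star coloring with $s$ colors, each pair of color classes induces a star forest. Choose a center for every star, and for every non-center $v$ of class $i$ and every other class $j$, record one bit $b_{v,j}$ equal to the color of the unique edge from $v$ to its center in class $j$. The colored graph then maps to a target whose vertices are pairs (class, bit-vector), and in which the color of an edge is read off from the leaf's bit. The two remaining steps are:
\begin{itemize}
\item show this target is realizable with two inversions per pair of classes, which already gives a quadratic bound;
\item share inversions across pairs to reach $\frac13(\chi_s^2+\chi_s+1)$, which should resemble a projective-plane style covering of the pairs.
\end{itemize}
For acyclic colorings, the $\chi_a$ bound follows from the standard bound $\chi_s\le 2\chi_a-ish$ refined to star colorings with about $2\chi_a$ colors, inserted into the $\chi_s$ bound. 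For an oriented coloring into a tournament $T$ on $\chi_o$ vertices, colour each vertex by its pair of images under colorings of both orientations. The edge color is then determined by the pair of image pairs, and a target on $\chi_o^2$ vertices should be realizable in dimension $\chi_o^2-1$ by a generic linear-algebra argument: any symmetric $0/1$ pattern on $n$ points with zero diagonal is realizable in dimension about $n-1$.

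\textbf{Functional equivalence.} This also needs the converse, namely that $\chi_a$ is bounded by a function of $\diam$. The first step is counting: $\diam$ is monotone under taking subgraphs, and if $\diam(G)\le k$ then all $2^{|E|}$ orientations are reachable from a fixed one by one of at most $2^{kn}$ inversion sequences. Hence every subgraph has $|E|\le k|V|$, so densities are bounded.

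Bounded density alone does not bound $\chi_a$. The plan is to apply the same counting to $1$-subdivision-type structures, showing that $G$ cannot contain a dense $(\le 1/2)$-shallow topological minor, and then invoke the known characterization that bounded $\nabla_{1/2}$ implies bounded $\chi_a$ and $\chi_s$. The main obstacle is this lower-bound direction: transferring the inversion count on $G$ to a density bound on its shallow minors. I would try to encode each red/blue pattern on a subdivided clique as distinct colorings of the original edges, forcing more than $2^{kn}$ patterns.
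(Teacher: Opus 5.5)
The paper quotes this theorem from Havet et al.\ without proving it, so there is no in-paper argument to compare with. Judged on its own, your proposal has a genuine gap exactly where you expected one, and the stated constants for $\chi_s$ and $\chi_a$ are not reached.

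\textbf{The lower-bound direction.} For functional equivalence you need $\chi_o$ (or $\chi_a$) bounded in terms of $\diam$. Your route through $\le\frac12$-shallow topological minors cannot be driven by counting, for two reasons.
\begin{itemize}
\item If $S$ is $H$ with every edge subdivided once, then $|E(S)|=2|E(H)|<2|V(S)|$. So $|E(S)|\le k|V(S)|$ says nothing once $k\ge 2$.
\item No encoding can beat $2^{kn}$ either. Once the branch vertices get pairwise distinct nonzero vectors, every coloring of the half-edges $uw,wv$ is realized by solving the two independent equations $\mathbf{w}\cdot\mathbf{u}=a$ and $\mathbf{w}\cdot\mathbf{v}=b$. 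So the branch vectors carry no information about the coloring.
\end{itemize}
Consistently, the paper's closing remark gives $\diam(S(K_n))\le\log n+O(1)$ for the $1$-subdivision $S(K_n)$. Hence any bound on $\tilde\nabla_{1/2}$ in terms of $\diam$ must be at least exponential, which no inequality of the form $|E|\le k|V|$ can produce.

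The missing idea is short and avoids sparsity theory.
\begin{itemize}
\item Your counting shows $G$ is $2k$-degenerate for $k=\diam(G)$. Fix a proper coloring $c$ with $2k+1$ colors and the reference orientation $D_0$ directing every edge from the smaller to the larger color.
\item Any orientation $D$ is $D_0$ after $k$ inversions, recorded by vectors $\mathbf{x}_v\in\mathbb{F}_2^k$.
\item Then $v\mapsto(c(v),\mathbf{x}_v)$ is a homomorphism of $D$ into the oriented graph on $[2k+1]\times\mathbb{F}_2^k$ where, for $a<b$, $(a,\mathbf{x})\to(b,\mathbf{y})$ if $\mathbf{x}\cdot\mathbf{y}=0$, and $(b,\mathbf{y})\to(a,\mathbf{x})$ otherwise.
\end{itemize}
This gives $\chi_o\le(2\diam+1)2^{\diam}$. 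Combined with your $\chi_o^2-1$ bound and the known ties among $\chi_o,\chi_a,\chi_s$, this yields functional equivalence.

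\textbf{The explicit bounds.} Your $\chi_o^2-1$ argument is sound once ``generic linear algebra'' is replaced by a sequential argument. On vertices $v_1,\dots,v_n$, invert $\{v_i\}\cup\{v_j: j>i,\ v_iv_j \text{ currently colored } 1\}$ for $i=1,\dots,n-1$. This reaches the zero coloring, so every $2$-edge-colored graph on $n$ vertices has norm at most $n-1$. A greedy choice of independent vectors can stall on isotropic subspaces over $\mathbb{F}_2$, which is why the sequential argument is needed.

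For $\chi_s$ you only obtain $\binom{\chi_s}{2}$; one inversion per pair of classes already suffices, since star forests have norm $1$. Your target must also record, for each other class, whether the vertex is a center or a leaf; otherwise its edge colors are not well defined. The bound $\frac13(\chi_s^2+\chi_s+1)$ beats one inversion per pair only when $\chi_s\ge 6$. Reaching it needs an explicit construction, for instance inversion sets meeting several classes whose side effects on other pairs cancel. ``Should resemble a projective-plane covering'' does not supply one.

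The $\chi_a$ step rests on a false premise: no bound $\chi_s=O(\chi_a)$ holds. The known bound $\chi_s\le\chi_a(2\chi_a-1)$ is quadratic. Graphs of maximum degree $\Delta$ satisfy $\chi_a=O(\Delta^{4/3})$, while some have $\chi_s=\Omega(\Delta^{3/2}/\sqrt{\log\Delta})$. Even granting $\chi_s\le 2\chi_a$, substitution gives $\frac13(4\chi_a^2+2\chi_a+1)$, not $\frac23(\chi_a^2+\chi_a+1)$. That the $\chi_a$ bound is exactly twice the $\chi_s$ bound suggests instead rerunning the pair-of-classes argument with forests (norm at most $2$) in place of star forests (norm $1$).
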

The above already tells us that the inversion diameter is bounded on classes where these parameters are, in particular planar~\cite{borodinAcyclicColoringsPlanar1979} and bounded degree~\cite{fertinAcyclicColoringGraphs2005}  graphs.
Still, this gives no indication on the optimality, and it is interesting to obtain bounds specific to $\diam$.
Letting $\Delta(G)$ be the maximum degree of $G$, Havet et al.~\cite{havet} showed that $\diam(G) \leq 2 \Delta(G) -1$, and conjectured that this bound can essentially be halved:
\begin{conjecture}[Conjecture 6.4 in \cite{havet}]\label{conj:delta}
    Every graph $G$ satisfies $\diam(G) \leq \Delta(G)$.
\end{conjecture}
They showed a bound of $\Delta + \lceil \log \Delta \rceil -1$ for bipartite graphs,
but the general case remains one of the main open problems regarding the inversion diameter.
Another approach initiated in~\cite{havet} is to give specific bounds for other sparse classes, such as planar graphs, proper minor-closed classes, and graphs of bounded treewidth.
We will be particularly interested in the first problem, where the authors show a bound of $12$ on the inversion diameter of general planar graphs.
Moreover, they show that this bound can be improved for planar graphs of large girth, for which they give (loose) lower bounds (see also~\Cref{table:bounds}).
A further restriction is the case of outerplanar graphs, which was settled by Wang et al.~\cite{wang2024inversion} with a tight upper bound of $4$.
With respect to other sparse classes, we note that Havet et al. show upper bounds for graphs excluding a fixed minor, and a bound of $2t$ for graphs of treewidth at most $t$, which was shown to be tight by Wang et al.~\cite{wang2024inversion}. 

\paragraph{Contributions and techniques.}
We investigate inversion diameter through several problems, in particular motivated by~\cite{havet}.
Our results are based on inversions of $2$-edge colored graphs, which give rise to an equivalent diameter.
Namely, given a graph $G$ along with a $2$-edge-coloring, the \emph{inversion} of a set of vertices $X$ consists of inverting the color of all edges with both endpoints in $X$ (i.e. all edges of $G[X]$). The resulting diameter, as is noted in~\Cref{ssec:2edgecol}, is equal to $\diam$.
We rely on two tools to give bounds on this diameter and therefore the inversion diameter of various classes.
The first, which we introduce, is the use of homomorphisms-universal $2$-edge-colored graphs for upper bounds on $\diam$. The second tool is a reformulation of $\diam$ in terms of linear algebra, which was observed in~\cite{havet} (\Cref{obs:main}). This allows us to derive both lower and upper bounds.

Our starting observation is that inversions of $2$-edge colored graphs are well-behaved under homomorphisms (\Cref{obs:diam-preserved-homo}).
In particular, one can bound the inversion diameter of a class $\mathcal{C}$, by bounding the diameter of a single \say{universal} graph for $\mathcal{C}$, called a target.
Specifically, a \emph{target} for $\mathcal{C}$ is a fixed $2$-edge-colored graph, onto which every $2$-edge-colored graph in $\mathcal{C}$ admits a homomorphism. Then, any sequence of inversions on the target can be lifted into a sequence for the original graph.

Section~\ref{sec:chia} shows our first application of this framework, by giving a linear upper bound on $\diam$ in terms of $\chi_a$ and $\chi_s$, improving on the quadratic dependency of~\Cref{thm:havet-functional-equiv}.
\begin{restatable}{theorem}{thmacyclic}
\label{thm:acyclic}
    For any graph $G$, $\diam(G) \leq \max(4\chi_a(G)-7,2) \leq \max(4 \chi_s(G) - 7,2)$.
\end{restatable}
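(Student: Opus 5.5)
The plan is to use the target framework of \Cref{obs:diam-preserved-homo}. We work throughout in the $2$-edge-colored setting of \Cref{ssec:2edgecol}, so $\diam(G)$ is the maximum over $2$-edge-colorings of $G$ of the number of inversions needed to make the coloring monochromatic. The second inequality is immediate from $\chi_a\le\chi_s$. For the first, fix an acyclic coloring of $G$ with $k=\chi_a(G)$ classes $V_1,\dots,V_k$; for $k\le 2$, $G$ is a forest and the bound $2$ should follow directly (or from the small cases of the construction below). Then:
\begin{enumerate}
\item Build a universal $2$-edge-colored target $T_k$ for all $2$-edge-colored graphs with this acyclic $k$-coloring, and show every such graph maps to $T_k$.
\item Deduce $\diam(G)\le \diam(T_k)$ via \Cref{obs:diam-preserved-homo}.
\item Bound $\diam(T_k)$ using the linear-algebra reformulation \Cref{obs:main}.
\end{enumerate}

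\textbf{Step 1: the target.} Let $V(T_k)=\{(i,s): i\in[k],\ s\in\{0,1\}^{[k]\setminus\{i\}}\}$. Join $(i,s)$ and $(j,t)$ for $i\ne j$ by an edge colored $s_j+t_i \pmod 2$. Given a $2$-edge-coloring $c$ of $G$, we map $u\in V_i$ to $(i,s(u))$ and choose the coordinates $s_j(u)$ one class pair at a time. For each pair $i<j$, the graph $G[V_i\cup V_j]$ is a forest; root each of its trees. Every non-root vertex $u$ has a unique parent $p$, lying in the other class. We set $u$'s coordinate indexed by $p$'s class so that $s_{\cdot}(u)+s_{\cdot}(p)=c(up)$, proceeding top-down and setting roots arbitrarily. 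Each coordinate of $u$ is used by exactly one forest and fixed by at most one parent constraint, so this is consistent. Hence every edge of $G$ is mapped to an edge of the same color, giving the homomorphism.

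\textbf{Step 3: bounding $\diam(T_k)$.} By \Cref{obs:main}, it suffices to find vectors $x_{(i,s)}\in\mathbb{F}_2^m$ with $m=4k-7$ such that
\[ \langle x_{(i,s)},x_{(j,t)}\rangle = s_j+t_i \quad\text{for all } i\ne j. \]
The right-hand side is a sum of two terms, each depending on one endpoint and the other endpoint's class. So the natural first attempt uses a block of coordinates per class: $u=(i,s)$ carries its bits $s_j$ in slot $j$ and an indicator of its own class $i$ in a paired slot. Then the cross pairing $c_j(x)d_j(y)+d_j(x)c_j(y)$ produces exactly $s_j+t_i$. This pairing is symplectic rather than the standard dot product, and it can be simulated with standard inner products at a cost of roughly $3$ coordinates per class, since
\[ x_1y_2+x_2y_1=(x_1+x_2)(y_1+y_2)+x_1y_1+x_2y_2. \]
The diagonal terms $x_1y_1+x_2y_2$ are then corrected by a few shared coordinates. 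I would also use the freedom to add a fixed vector, and to leave one class's bits unused since $s\in\{0,1\}^{[k]\setminus\{i\}}$, to shave the count.

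\textbf{Main obstacle.} The hard step is Step 3: getting exactly $4k-7$ rather than a crude $O(k)$ such as $6k$. I would first try induction on $k$, adding one class at a time. The existing vectors are extended by $4$ new coordinates. The new coordinates must realize the pairing between the new class and the old classes, cancel the spurious diagonal terms, and leave old–old inner products untouched. The base case is $k=3$, which needs $5$ dimensions and should be checked by hand. I expect the base case and the exact bookkeeping of the cancellations to be where the constant $-7$ is decided.
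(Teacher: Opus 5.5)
\textbf{The gap is Step 3.} Your Steps 1 and 2 are fine. Under $\alpha_j=(-1)^{s_j}$ your $T_k$ is exactly the Zielonka graph $(SZ_k,\pi_k)$, since the color $\frac12(1-\alpha_j\beta_i)$ becomes $s_j+t_i$. Your top-down forest argument is the standard proof of the Raspaud--Sopena homomorphism theorem that the paper quotes.

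Step 3 carries all the quantitative content ($\|\pi_k\|\le 4k-7$ for $k\ge 3$), and you do not prove it. The sketch also does not reach that bound:
\begin{itemize}
\item Three coordinates $(c_j,d_j,c_j+d_j)$ per class reproduce the symplectic pairing exactly, with no correction needed. But this costs $3k$, which exceeds $4k-7$ for $k\le 6$.
\item If you keep fewer coordinates, e.g.\ only $c_j+d_j$, the error term on $X_i\times X_j$ is $\sum_{l\notin\{i,j\}}s_lt_l$. This has rank $k-2$ (restrict to unit vectors $s,t$), so it cannot be cancelled by ``a few shared coordinates''.
\item The induction adding $4$ coordinates per class from a $5$-dimensional base is only numerology matching $4k-7$. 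Neither the base case nor the extension step is constructed.
\end{itemize}
The paper handles this step combinatorially instead. It uses $2k-2$ inversions to make every vertex outside a subgraph $G_k$ a twin of a vertex in $G_k$, deletes twins, and then shows $\|\sigma_k\|\le 2k-5$ using two inversions per class.

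\textbf{How your idea can be completed.} Your symplectic idea works, and more cheaply than you expect. For each class $l$, keep the two coordinates $(c_l,c_l+d_l)$. For $u=(i,s)$ this is $(s_l,s_l)$ if $l\ne i$ and $(0,1)$ if $l=i$. For $i\ne j$:
\begin{itemize}
\item slot $l\notin\{i,j\}$ contributes $2s_lt_l=0$;
\item slot $i$ contributes $t_i$;
\item slot $j$ contributes $s_j$.
\end{itemize}
So the inner product is $s_j+t_i$, and $\|\pi_k\|\le 2k$. Combinatorially, for each $l$ you invert $A_l$ and then $A_l\cup V_l$, which flips exactly the edges between $V_l$ and $A_l$.

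This gives $2k\le 4k-7$ for every $k\ge 4$, which is stronger than the stated bound. For $k\le 2$, the assignment $(1,s)\mapsto(1,s_2)$, $(2,t)\mapsto(t_1,1)$ is a $2$-inversion.

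At $k=3$, however, the construction yields $6>5$, so that case needs its own argument. For instance, set
$O_1=(1,0,0,0)$, $O_2=(0,1,0,0)$, $O_3=(0,0,1,1)$, $P_1=(1,1,1,1)$, $P_2=(0,1,1,0)$, $P_3=(1,0,0,1)$.
Then the assignment
$(1,s)\mapsto O_1+s_2P_1+s_3P_3$, $(2,t)\mapsto O_2+t_1P_1+t_3P_2$, $(3,r)\mapsto O_3+r_1P_3+r_2P_2$
is a $4$-inversion of $SZ_3$, as one checks on all $48$ edges. This is enough to close the case $k=3$.
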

\noindent
The second inequality comes from the fact that $\chi_a \leq \chi_s$. We believe these bounds are not tight and could still be improved, at the cost of a more technical proof. However, observe that a linear bound is best possible since cliques of size $k$ have inversion diameter as well as star and acyclic chromatic number $k$.
The main remaining question is whether this linear dependency holds for $\chi_o$ as well, as the upper bound on $\chi_a$ in terms of $\chi_o$ is cubic~\cite{kostochka1997acyclic}.

Then, \Cref{sec:planar} deals with the inversion diameter of planar graphs, and in particular those of a prescribed girth, following~\cite{havet}. There, we improve lower and upper bounds on $\diam$, as summarized in~\Cref{table:bounds}. These results are obtained using both homomorphisms  and the linear algebra expression of $\diam$.

\begin{table}[!ht]
\centering
\begin{tabular}{|c|c|c|}
\hline
girth & lower bound & upper bound \\
\hline
3       &{\color{red} 6}   & 12 \cite{havet}       \\
4       & {\color{red} 4}  & {\color{red} 10}     \\ 
5       &  3               & {\color{red} 7}      \\
6       &   3              & {\color{red} 5}      \\ 
7       &  3               & {\color{red} 3}      \\
8       & 3 \cite{havet}   & 3 \cite{havet}       \\
\hline
\end{tabular}
\caption{Lower and upper bounds on the inversion diameter of planar graphs of a given girth. Previously known bounds are in black, while our improvements are highlighted in red.}\label{table:bounds}
\end{table}

In~\Cref{sec:triangle-free}, we inch closer to~\Cref{conj:delta} for triangle-free graphs, using the algebraic formulation of $\diam$, combined with standard linear algebra. 
\begin{restatable}{theorem}{thmtrianglefree}
\label{thm:main}
    Every triangle-free graph $G$ satisfies $\diam(G)\leq \Delta(G)+\lfloor\log\Delta(G)\rfloor$.
\end{restatable}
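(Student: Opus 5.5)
The natural route is the linear-algebra reformulation of $\diam$ (\Cref{obs:main}). An inversion of $X$ flips exactly the edges of $G[X]$. So, over $\mathbb{F}_2$, a sequence of $k$ inversions $X_1,\dots,X_k$ assigns to each vertex $v$ a vector $x_v\in\mathbb{F}_2^k$, the indicator of the sets containing $v$. The edge $uv$ is flipped iff $\langle x_u,x_v\rangle=1$. Hence $\diam(G)\le k$ holds iff for every target function $c:E(G)\to\mathbb{F}_2$ there are vectors $(x_v)_{v\in V}$ in $\mathbb{F}_2^k$ with $\langle x_u,x_v\rangle=c(uv)$ for all edges $uv$. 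I set $k=\Delta+\lfloor\log\Delta\rfloor$ and build the vectors greedily, one vertex at a time, in an arbitrary order $v_1,\dots,v_n$.

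When $v=v_i$ is processed, let $N^-(v)$ be its already-embedded neighbours, with $d=|N^-(v)|\le\Delta$. We need $x_v$ solving the affine system $\langle x_v,x_u\rangle=c(uv)$ for $u\in N^-(v)$. This system is solvable for every right-hand side iff the vectors $\{x_u:u\in N^-(v)\}$ are linearly independent. This alone only gives the trivial bound, so I strengthen the induction hypothesis. The invariant to maintain is that \emph{for every vertex $w$, the vectors of the already-embedded neighbours of $w$ are linearly independent}, and moreover that they stay far enough from spanning everything so that later neighbours can be added.

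Consider the choice of $x_v$. The solutions form an affine subspace $S$ of dimension $k-d\ge\lfloor\log\Delta\rfloor$ once independence at $v$ holds. For each neighbour $w$ of $v$ not yet embedded, with current embedded neighbourhood set $A_w$ (which includes nothing from $N^-(v)$ adjacent to... ), we need $x_v\notin\mathrm{span}(A_w)$. Triangle-freeness is used here: $N(v)$ is an independent set, so the constraints defining $S$ involve vectors $x_u$ with $u\in N(v)$, while the forbidden spans $\mathrm{span}(A_w)$ for $w\in N(v)$ are built from neighbours of $w$. Those neighbours are non-neighbours of the vertices $u\in N^-(v)$... Concretely, I would count: $x_v$ must avoid at most $\Delta-d$ subspaces $\mathrm{span}(A_w)$, and each intersects $S$ in a proper affine subspace whenever some element of $S$ lies outside it. Each bad intersection then has at most $|S|/2$ points. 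A union bound fails, so I would instead pick $x_v$ by an averaging or structured argument. The $\lfloor\log\Delta\rfloor$ extra dimensions give $|S|\ge 2^{\lfloor\log \Delta\rfloor}>\Delta/2$ points. The plan is to show that the set of points of $S$ lying in $\mathrm{span}(A_w)$ is small, say at most one point, by arranging that $A_w$ together with the constraint vectors of $S$ is independent. The extra $\log\Delta$ coordinates then leave room to dodge all at most $\Delta-1$ such points once $2^{\lfloor\log\Delta\rfloor+ (\Delta-d)}$ is compared with the count.

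The main obstacle is exactly this last step. One must find the right invariant, likely a joint independence condition on $\{x_u:u\in N^-(v)\}\cup\{x_a:a\in A_w\}$, that triangle-freeness makes preservable. One must also verify the counting closes with exactly $\lfloor\log\Delta\rfloor$ spare dimensions. If the greedy order fails, I would first try processing vertices so that each vertex has at most $\Delta-1$ earlier neighbours, as in a degeneracy ordering, handling a $\Delta$-regular component by reserving its last vertex. This mirrors the $\Delta+\lceil\log\Delta\rceil-1$ bipartite argument of~\cite{havet}.
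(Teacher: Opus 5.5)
There is a genuine gap. As written, your proposal is a plan rather than a proof: it stops at exactly the step you flag as the main obstacle. Moreover, the invariant you propose for that step cannot be maintained. You ask that $\{x_u : u\in N^-(v)\}\cup\{x_a : a\in A_w\}$ be linearly independent. Triangle-freeness makes these two vertex sets disjoint, but it says nothing about their vectors, and there can be up to $(\Delta-1)+(\Delta-1)=2\Delta-2$ of them. For instance, take a $\Delta$-regular triangle-free graph in which the last two vertices $v,w$ of your arbitrary order are adjacent. You would then need $2\Delta-2$ independent vectors in $\mathbb{F}_2^{\Delta+\lfloor\log\Delta\rfloor}$, which is impossible once $\Delta\ge 5$. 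More generally, a forward greedy that fixes each vector once and keeps every pending vertex solvable for \emph{every} right-hand side pays for independence on both sides of an edge. That route leads to bounds of order $2\Delta$, like the $2\Delta-1$ of \cite{havet}, not to $\Delta+\lfloor\log\Delta\rfloor$.

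The missing idea is to re-choose the neighbours' vectors instead of fixing them once, and to require only consistency, not independence. Argue by induction on $|V(G)|$. Delete a vertex $u$ with neighbours $v_1,\dots,v_k$, take a $t$-inversion of $G-u$ with $t=\Delta+\lfloor\log\Delta\rfloor$, and freeze all vectors outside $N[u]$. By triangle-freeness the $v_i$ are pairwise non-adjacent. So the set $V_i$ of admissible vectors for $v_i$ is the solution set of at most $\Delta-1$ equations with frozen coefficients. It is non-empty because the current $\mathbf{v_i}$ lies in it. Hence $V_i=\mathbf{v_i}+E_i$ with $\dim E_i\ge t-\Delta+1$, that is, $\dim E_i^\perp\le\Delta-1$. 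Now choose $\mathbf{u}$ outside $\bigcup_i E_i^\perp$. This is possible because the union has at most $\Delta\, 2^{\Delta-1}<2^{t}$ elements, which holds since $\Delta<2^{\lfloor\log\Delta\rfloor+1}$. Then $\mathbf{u}$ is orthogonal to no $E_i$, so by Lemma~\ref{lem:main-aux} each $\mathbf{v_i}$ can be replaced by some $\mathbf{v_i'}\in V_i$ with $\mathbf{u}\cdot\mathbf{v_i'}=\pi(uv_i)$. These replacements do not interfere with one another, since the $v_i$ are pairwise non-adjacent and their other neighbours are frozen. In short, the edges at $u$ are satisfied by the neighbours, each of which has at least $t-\Delta+1$ free dimensions, while $\mathbf{u}$ only needs to be generic. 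The $\lfloor\log\Delta\rfloor$ spare dimensions pay exactly for the union bound over at most $\Delta$ subspaces of dimension at most $\Delta-1$.
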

\noindent
This asymptotically matches and extends the bound obtained by~\cite{havet} when $G$ is bipartite.

In Section~\ref{sec:subd}, we rely on the linear algebra expression of the inversion diameter to study its behavior with respect to subdivisions.
Namely, we show that subdividing a graph can only increase $\diam$ by a logarithmic term. 
\begin{restatable}{theorem}{thmsubd}
\label{thm:subd}
    Let $H$ be a subdivision of $G$, then $\diam (H) \le \diam (G) + \log(\diam(G))+5$.
\end{restatable}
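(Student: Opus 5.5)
Throughout I use the linear-algebra reformulation of \Cref{obs:main}. Inverting sets $X_1,\dots,X_k$ flips edge $uv$ exactly when $\langle x_u,x_v\rangle=1$ over $\mathbb F_2$, where $x_v\in\mathbb F_2^k$ is the indicator vector of the sets containing $v$. Hence $\diam(G)\le k$ if and only if for every target $f:E(G)\to\mathbb F_2$ there are vectors $(x_v)_{v\in V(G)}$ in $\mathbb F_2^k$ with $\langle x_u,x_v\rangle=f(uv)$ for every edge $uv$.

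Let $k=\diam(G)$ and fix a target $f$ on $E(H)$. Each edge $uv$ of $G$ either stays an edge of $H$ or becomes a path $u=w_0,w_1,\dots,w_m=v$ with $m\ge 2$. The plan is to work in dimension $k+r$ with $r=\lceil\log(2k+2)\rceil+O(1)\le \log k+5$. The original vertices get vectors $z_v=(x_v,c_v)$, and the subdivision vertices are then filled in greedily along each path.

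\textbf{Step 1: choose the extra coordinates $c_v$.} A counting argument bounds $\chi(G)$. Any subgraph $G'$ of $G$ has $\diam(G')\le k$, so all $2^{|E(G')|}$ targets on $G'$ are realised by vector assignments in $\mathbb F_2^{k}$. There are at most $2^{k|V(G')|}$ such assignments, so $|E(G')|\le k|V(G')|$. Thus $G$ is $2k$-degenerate and $\chi(G)\le 2k+1$. Take a proper colouring of $G$ and give the colour classes distinct \emph{nonzero} vectors $c_v\in\mathbb F_2^{r'}$ with $r'=\lceil\log(2k+2)\rceil$. Pad them with a few more coordinates, set to zero for now, to be used as free room later.

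\textbf{Step 2: choose $x_v$ using $\diam(G)\le k$.} Define the modified target $g(uv)=f(uv)+\langle c_u,c_v\rangle$ on the edges $uv$ of $G$ that are not subdivided, and set $g$ arbitrarily on the other edges. The assumption $\diam(G)\le k$ gives vectors $x_v\in\mathbb F_2^k$ realising $g$. Then $z_v=(x_v,c_v)$ satisfies $\langle z_u,z_v\rangle=f(uv)$ on unsubdivided edges. Moreover every $z_v$ is nonzero, and adjacent vertices have $z_u\ne z_v$ because their colours differ. Over $\mathbb F_2$ this means $z_u,z_v$ are linearly independent for every edge $uv$ of $G$.

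\textbf{Step 3: fill in each subdivided path.} Treat the paths independently, since internal vertices of distinct paths are nonadjacent in $H$.

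For $m=2$ we need a single $y$ with $\langle z_u,y\rangle=f(w_0w_1)$ and $\langle z_v,y\rangle=f(w_1w_2)$. This linear system is solvable because $z_u,z_v$ are independent.

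For $m\ge3$ we choose $y_1,\dots,y_{m-2}$ one by one. Each $y_i$ must satisfy the single linear condition $\langle y_{i-1},y_i\rangle=f(w_{i-1}w_i)$ with $y_0=z_u$. In addition we require $y_i\notin\{0\}$ and $y_{m-2}\notin\{0,z_v\}$. Finally we pick $y_{m-1}$ solving two linear conditions against the independent pair $y_{m-2},z_v$.

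Each greedy choice is made from an affine subspace of codimension at most $1$ in dimension $k+r\ge 3$. Such a subspace has at least $2^{k+r-1}\ge 4$ elements, and we avoid at most $2$ forbidden vectors. Nonzeroness of $y_{i-1}$ guarantees the condition is consistent when the target value is $1$. This is where the spare padding coordinates are used.

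\textbf{Main obstacle.} The delicate part is Step 1/2: choosing the colour vectors so that independence holds on \emph{every} edge of $G$ while the unsubdivided edges keep their exact prescribed inner products. Folding $\langle c_u,c_v\rangle$ into the modified target $g$ handles the second requirement. The first requires the $O(\log k)$ bound on the colouring dimension, which comes from the degeneracy counting argument. Tracking the constants so that the total overhead is at most $\log k+5$ is the remaining bookkeeping.
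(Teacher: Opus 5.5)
Your proof is correct, but it takes a different route from the paper's.

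\textbf{The paper's route.} The paper works with explicit inversion sequences, in three phases.
First, $\lceil\log\chi(G)\rceil$ inversions, indexed by the bits of a proper colouring of $G$, make the two end-edges $uu'$ and $v'v$ of every subdivided path equal in colour. The paper gets $\chi(G)\le 2\diam(G)+1$ from the degeneracy bound of Havet et al.; your counting argument re-derives the same bound.
Second, two inversions on the path forest induced by the subdivision vertices, whose inversion diameter is at most $2$, make every path monochromatic.
Third, an optimal inversion sequence for the induced colouring of $G$ is lifted to $H$: all internal vertices of a path are added whenever both of its ends are inverted.

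\textbf{Your route.} You stay entirely in the vector formulation of \Cref{obs:main}. The colour-code coordinates $c_v$ are appended to a $k$-inversion of the corrected target $g$. Folding $\langle c_u,c_v\rangle$ into $g$ on unsubdivided edges is what lets the extra coordinates coexist with exact inner products there. The only job of the $c_v$ is to make the endpoint vectors of each subdivided edge nonzero and distinct, hence linearly independent. That independence is exactly what the greedy filling of each path needs.

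\textbf{Comparison.} Your approach makes the ``two inversions on the path forest'' unnecessary. It yields
\[
\diam(H)\le \diam(G)+\lceil\log(2\diam(G)+2)\rceil\le \diam(G)+\log\diam(G)+3,
\]
which is slightly better than the stated bound. It also makes the paper's closing remark immediate: when every edge is subdivided, you can drop the $x_v$ part and keep only a constant number of coordinates beyond the colour code. The paper's version is more modular. It reuses the forest result as a black box, and its lifting of inversion sequences to monochromatically subdivided colourings is a clean statement in its own right.

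\textbf{A minor remark.} The padding coordinates are never needed. Whenever $\log\diam(G)$ is defined we have $k\ge 1$, so $r'\ge\lceil\log 4\rceil=2$ and $k+r'\ge 3$. Each affine hyperplane $\{y:\langle y_{i-1},y\rangle=b\}$ therefore already has at least $4$ elements, which is enough to avoid $\mathbf{0}$ and $z_v$.
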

We have no indication that this result should be tight, and we leave as an open question whether it could be lowered
to $\diam(G) + c$ for some constant $c>0$. 
\begin{question}
    Is there a constant $c$ such that for every graph $G$ and every subdivision $H$ of $G$, then $\diam(H)\leq \diam (G)+c$?
\end{question}

\section{Preliminaries}\label{sec:preliminaries}


We will use the notation $[l,r]:=\{m\in\mathbb{N}\mid l \leq m \leq r\}$, which will be simplified to $[r]$ when $l=1$.
We denote by $\mathbb{F}_2$ the 2-element field, and by $\cdot$ the standard dot-product on $\mathbb{F}_2$-vector spaces.

\subsection{Inversions of 2-edge-colored graphs}\label{ssec:2edgecol}

We will denote a colored graph by $(G,\pi)$, where 
$\pi:E(G)\to\mathbb{F}_2$.
We define the \emph{norm} $\|\pi\|$ of $\pi$ as the minimum length of an inversion sequence turning $(G, \pi)$ into the constant edge-coloring of $G$ in which every edge receives color 0.

While there is no canonical bijection between orientations of $G$ and its $2$-edge colorings, the latter can still be used to capture inversion distances between orientations, and thus $\diam$.
Indeed, consider two orientations $\ora{G_1}$ and $\ora{G_2}$ of a graph $G$. We define the coloration $\pi$ by letting, for every $uv\in E(G)$, $\pi(uv) = 0$ if the orientation of $uv$ is the same in $\ora{G_1}$ and $\ora{G_2}$, and $\pi(uv) = 1$ otherwise.
Then, observe that $\|\pi\|$ is precisely the (oriented) inversion distance between $\ora{G_1}$ and $\ora{G_2}$. In particular, we get:
\begin{observation}
    \label{obs:norm}
    For every graph $G$, $\diam(G)$ is the maximum norm of a $2$-edge-coloring of $G$. 
\end{observation}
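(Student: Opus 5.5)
The plan is to set up the correspondence between pairs of orientations and 2-edge-colorings explicitly, and then check that inversion sequences transfer through it in both directions. Fix a labeled graph $G$. For two orientations $\ora{G_1},\ora{G_2}$ of $G$, let $\pi_{1,2}$ be the coloring defined above: $\pi_{1,2}(uv)=1$ exactly when $uv$ is oriented differently in $\ora{G_1}$ and $\ora{G_2}$.

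The key step is the following invariance. Let $\ora{G_1}'$ be obtained from $\ora{G_1}$ by inverting a set $X$, and let $\pi'$ be obtained from $\pi_{1,2}$ by inverting the same $X$ in the colored sense. Then $\pi_{1',2}=\pi'$. To see this, note that an edge $uv$ has its orientation flipped in $\ora{G_1}'$ if and only if $u,v\in X$. That is exactly when its \say{agreement with $\ora{G_2}$} status flips, which is exactly when its color flips in $\pi'$.

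By induction, a sequence $X_1,\dots,X_k$ turns $\ora{G_1}$ into $\ora{G_2}$ if and only if the same sequence turns $\pi_{1,2}$ into the coloring of the final orientation relative to $\ora{G_2}$. That final coloring is the all-$0$ coloring precisely when the final orientation equals $\ora{G_2}$. Hence the inversion distance between $\ora{G_1}$ and $\ora{G_2}$ equals $\|\pi_{1,2}\|$. Both quantities are finite, since inverting each pair $\{u,v\}$ with $\pi(uv)=1$ works.

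Finally, I take maxima on both sides. Every distance between two orientations is the norm of some coloring, so $\diam(G)\le\max_\pi\|\pi\|$. For the converse, every coloring $\pi$ arises as some $\pi_{1,2}$: fix any orientation $\ora{G_1}$ and let $\ora{G_2}$ reverse exactly the edges with $\pi(uv)=1$. This gives equality.

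There is no real obstacle here. The only point needing care is that the orientation-side and coloring-side effects of an inversion coincide edge by edge. This holds because both are determined solely by whether both endpoints lie in $X$.
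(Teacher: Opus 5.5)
Your proof is correct and takes the same approach as the paper. The paper associates to $\ora{G_1},\ora{G_2}$ the coloring marking the edges on which they disagree and simply asserts that its norm equals their inversion distance. You additionally verify the edge-by-edge invariance under inversions and note that every coloring arises from some pair of orientations, which is needed for the reverse inequality and which the paper leaves implicit.
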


\subsection{Linear algebra expression of \texorpdfstring{$\diam$}{diam}}

We now recall the linear algebraic expression of $\diam$ from~\cite{havet}.
Throughout the paper, bold letters will denote vectors, and when assigning vectors to vertices of a graph, vertex $u$ gets assigned vector $\mathbf{u}$. 
Given a $2$-edge coloring $\pi$ of $G$, a \emph{$t$-inversion} for $\pi$ is the assignment of some $\mathbf{u}\in\mathbb{F}_2^t$ to each $u \in V(G)$, such that $\pi(uv)=\mathbf{u\cdot v}$ for every $uv\in E(G)$.
\begin{observation}[Observation 2.2 in~\cite{havet}]
\label{obs:main}
For every graph $G$ and every positive integer $t$, $\diam(G)\leq t$ if and only if every 2-edge-coloring $\pi:E(G)\to\mathbb{F}_2$ admits a $t$-inversion.
\end{observation}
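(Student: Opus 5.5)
The statement to prove is \Cref{obs:main}: $\diam(G)\le t$ if and only if every $2$-edge-coloring of $G$ admits a $t$-inversion. By \Cref{obs:norm} this amounts to showing that, for a fixed coloring $\pi$, we have $\|\pi\|\le t$ exactly when $\pi$ admits a $t$-inversion. The plan is to set up a dictionary between sequences of $t$ inversions and assignments of vectors in $\mathbb{F}_2^t$ to the vertices.

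First I will fix a sequence of inversions $X_1,\dots,X_t$ and assign to each vertex $u$ its indicator vector $\mathbf{u}\in\mathbb{F}_2^t$, with $\mathbf{u}_i=1$ exactly when $u\in X_i$. An edge $uv$ has its color flipped by the inversion of $X_i$ precisely when $u,v\in X_i$, that is, when $\mathbf{u}_i\mathbf{v}_i=1$. Inversions commute, since each one just adds a fixed $\mathbb{F}_2$-valued function on the edges. Hence after the whole sequence the color of $uv$ has become $\pi(uv)+\sum_i \mathbf{u}_i\mathbf{v}_i=\pi(uv)+\mathbf{u\cdot v}$. The sequence therefore turns $\pi$ into the all-zero coloring if and only if $\pi(uv)=\mathbf{u\cdot v}$ for every edge, which is the definition of a $t$-inversion.

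Conversely, any assignment of vectors in $\mathbb{F}_2^t$ defines the sets $X_i=\{u:\mathbf{u}_i=1\}$, and the same computation applies to them. So sequences of exactly $t$ inversions that zero out $\pi$ are in bijection with $t$-inversions of $\pi$.

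It remains to handle sequences of length at most $t$ rather than exactly $t$. A shorter sequence can be padded with empty inversions, $X_i=\emptyset$, which change nothing. Thus $\|\pi\|\le t$ if and only if $\pi$ admits a $t$-inversion, and taking the maximum over all $\pi$ via \Cref{obs:norm} gives the equivalence. There is no real obstacle here; the only point needing care is the commutativity of inversions over $\mathbb{F}_2$, which holds because each inversion adds a fixed edge function.
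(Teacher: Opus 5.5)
Your proof is correct. It is the standard argument for this observation: the dictionary $\mathbf{u}_i=1 \iff u\in X_i$, the identity that the final color of $uv$ is $\pi(uv)+\mathbf{u}\cdot\mathbf{v}$, and padding with empty inversions. The paper gives no proof of its own (it recalls the statement from Havet et al.), but it uses this same correspondence implicitly, e.g.\ in the proof of \Cref{obs:diam-preserved-homo}.
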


\subsection{Bounding \texorpdfstring{$\diam$}{diam} using homomorphisms}

Here, we show our main use for the $2$-edge-colored formulation of $\diam$, namely its behavior under homomorphisms.
Given two edge-colored graphs $(G,\pi)$ and $(H,\pi')$, we say that $f:V(G)\to V(H)$ is a \emph{homomorphism} if $f(u)f(v)$ is an edge of $H$ whenever $uv$ is an edge of $G$, and moreover, $\pi(uv)=\pi'(f(u)f(v))$.
We write $(G,\pi)\to (H,\pi')$ when there exists such a homomorphism.
The crucial observation then is that norm is non-decreasing under homomorphisms of $2$-edge colored graphs.
Indeed, a $t$-inversion in the image of a homomorphism can be lifted to an inversion in its pre-image, as shown below.

\begin{observation}\label{obs:diam-preserved-homo}
    If $(G,\pi)\to (H,\pi')$, then $\|\pi\|\leq \|\pi'\|$.
\end{observation}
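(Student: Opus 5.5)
Let $f:V(G)\to V(H)$ be a homomorphism from $(G,\pi)$ to $(H,\pi')$, and let $t=\|\pi'\|$. The idea is to pull back an optimal inversion sequence on $H$ to $G$ through $f$: an edge of $G$ is affected by a pulled-back inversion exactly when its image edge is affected by the original one.

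\textbf{Step 1: take an optimal sequence on $H$.} By definition of the norm, there is a sequence of inversions $X_1,\dots,X_t\subseteq V(H)$ turning $(H,\pi')$ into the all-$0$ coloring. An edge $xy$ of $H$ has its color flipped once for each index $i$ with $\{x,y\}\subseteq X_i$. Hence, for every $xy\in E(H)$,
\[
\pi'(xy)=\bigl|\{i\in[t] : x,y\in X_i\}\bigr| \bmod 2 .
\]

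\textbf{Step 2: pull the sets back to $G$.} Define $Y_i=f^{-1}(X_i)\subseteq V(G)$ for $i\in[t]$. Take any edge $uv\in E(G)$. Then $u,v\in Y_i$ if and only if $f(u),f(v)\in X_i$. Since $f$ is a homomorphism, $f(u)f(v)$ is an edge of $H$ with $\pi'(f(u)f(v))=\pi(uv)$. Therefore the number of inversions $Y_i$ flipping $uv$ has the same parity as $\pi'(f(u)f(v))$, which equals $\pi(uv)$.

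\textbf{Step 3: conclude.} By Step 2, applying $Y_1,\dots,Y_t$ to $(G,\pi)$ turns every edge to color $0$. This is a sequence of length $t$, so $\|\pi\|\le t=\|\pi'\|$.

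Equivalently, in the language of \Cref{obs:main}, set $\mathbf{u}:=\mathbf{f(u)}$, where $\mathbf{x}\in\mathbb{F}_2^t$ is the indicator vector of the sets $X_i$ containing $x$. Then $\mathbf{u}\cdot\mathbf{v}=\pi'(f(u)f(v))=\pi(uv)$ for every $uv\in E(G)$.

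There is no real obstacle. The only point that needs care is that the homomorphism preserves both adjacency and color, so every edge of $G$ maps to an edge of $H$ of the same color. Non-injectivity of $f$ causes no problem, because membership of a vertex in $Y_i$ depends only on its image.
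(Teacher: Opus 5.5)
Your proposal is correct and is essentially the paper's proof: the paper takes a $t$-inversion $\mathbf{i}:V(H)\to\mathbb{F}_2^t$ for $\pi'$ and observes that $\mathbf{i}\circ f$ is a $t$-inversion for $\pi$. Your pullback $Y_i=f^{-1}(X_i)$ is the same construction, written with the inversion sets instead of their indicator vectors, as your final remark already notes.
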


\begin{proof}
    Let $t = \|\pi'\|$ and $\mathbf{i}:V(H) \to \mathbb{F}_2^t$ be a $t$-inversion for $\pi'$. Let $f$ be a homomorphism from $(G,\pi)$ to $(H,\pi')$. We claim that $\mathbf{i}\circ f$ is a $t$-inversion for $\pi$, thus proving that $\|\pi\| \le \|\pi'\|$. Consider any edge $uv \in E(G)$: its endpoints are assigned the vectors $\mathbf{i}(f(u))$ and $\mathbf{i}(f(v))$ respectively, and since $f(u)f(v)$ is an edge in $H$, we have $\mathbf{i}(f(u)) \cdot \mathbf{i}(f(v)) = \pi'(f(u)f(v)) = \pi(uv)$.
\end{proof}

Given a class of graphs $\mathcal{C}$, we say that a $2$-edge-colored graph $(H,\pi)$ is a \emph{target} for $\mathcal{C}$ if every 2-edge-coloring of every element of $\mathcal{C}$ admits a homomorphism to $(H,\pi)$. As an easy consequence of the previous observation, we get the following.

\begin{lemma}
\label{prop:hom}
    If $(H,\pi)$ is a target for $\mathcal{C}$, then $\diam(G)\leq \|\pi\|$ for every $G\in\mathcal{C}$. 
\end{lemma}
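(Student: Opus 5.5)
The plan is to combine \Cref{obs:norm} with \Cref{obs:diam-preserved-homo}. Fix $G\in\mathcal{C}$. By \Cref{obs:norm}, $\diam(G)$ equals the maximum of $\|\sigma\|$ over all $2$-edge-colorings $\sigma:E(G)\to\mathbb{F}_2$. So it suffices to show $\|\sigma\|\leq\|\pi\|$ for an arbitrary coloring $\sigma$ of $G$.

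Let $\sigma$ be such a coloring. Since $(H,\pi)$ is a target for $\mathcal{C}$ and $G\in\mathcal{C}$, the definition of a target gives a homomorphism $(G,\sigma)\to(H,\pi)$. \Cref{obs:diam-preserved-homo} then yields $\|\sigma\|\leq\|\pi\|$. Taking the maximum over all $\sigma$ gives $\diam(G)\leq\|\pi\|$.

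There is essentially no obstacle; the only point to check is consistency between the two notions used. The proof of \Cref{obs:diam-preserved-homo} phrases the norm through $t$-inversions, while $\|\cdot\|$ was defined as a minimum inversion-sequence length. These agree: a $t$-inversion $\mathbf{u}\in\mathbb{F}_2^t$ corresponds to inverting the sets $X_i=\{u : \mathbf{u}_i=1\}$ for $i\in[t]$, since the color of $uv$ flips exactly $\mathbf{u}\cdot\mathbf{v}$ times. This is the content of \Cref{obs:main} applied coloring by coloring, so I would cite it rather than reprove it.
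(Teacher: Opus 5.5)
Your proof is correct and follows the paper's route: the paper states this lemma as an immediate consequence of \Cref{obs:diam-preserved-homo}, applied to every $2$-edge-coloring of $G$ and combined with \Cref{obs:norm}, which is exactly your argument. Your check that $t$-inversions correspond to inversion sequences of length $t$ is optional, since the paper's proof of \Cref{obs:diam-preserved-homo} already relies on that correspondence.
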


It is tempting to generalize the latter proposition to deal with multiple targets. This is indeed possible: say that a set $\mathcal{H}$ of $2$-edge-colored graphs is a \emph{target set} for $\mathcal{C}$ if every $2$-edge-coloring of every element of $\mathcal{C}$ admits a homomorphism to some element of $\mathcal{H}$. \Cref{prop:hom} naturally extends as follows.

\begin{proposition}
    If $\mathcal{H}$ is a target set for $\mathcal{C}$, then $\diam(G)\leq \max_{(H,\pi)\in\mathcal{H}} \|\pi\|$ for every $G\in\mathcal{C}$. 
\end{proposition}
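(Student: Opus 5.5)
The plan is to reduce this to \Cref{obs:diam-preserved-homo} combined with \Cref{obs:norm}, exactly as in the single-target case of \Cref{prop:hom}. Fix $G\in\mathcal{C}$. By \Cref{obs:norm}, $\diam(G)$ is the maximum of $\|\sigma\|$ over all $2$-edge-colorings $\sigma$ of $G$. It therefore suffices to show that every such $\sigma$ satisfies $\|\sigma\|\leq M$, where $M:=\max_{(H,\pi)\in\mathcal{H}}\|\pi\|$.

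So take an arbitrary $2$-edge-coloring $\sigma$ of $G$. Because $\mathcal{H}$ is a target set for $\mathcal{C}$, there is some $(H,\pi)\in\mathcal{H}$, which may depend on $\sigma$, with $(G,\sigma)\to(H,\pi)$. By \Cref{obs:diam-preserved-homo}, $\|\sigma\|\leq\|\pi\|$, and $\|\pi\|\leq M$ by the definition of $M$. Taking the maximum over $\sigma$ then gives $\diam(G)\leq M$.

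The only point needing care is that the maximum defining $M$ should make sense. If $\mathcal{H}$ is infinite, one reads $\max$ as $\sup$, and the inequality still holds, trivially so if the supremum is infinite. Also, each norm $\|\pi\|$ is finite: inverting both endpoints of each edge of color $1$ separately gives a valid inversion sequence, so $\|\pi\|\leq|E(H)|$ for finite $H$. Apart from this there is no real obstacle, since the argument is a direct pointwise application of the single-target lemma.
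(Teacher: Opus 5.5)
Your proof is correct and matches the intended argument: the paper gives no separate proof and only says that \Cref{prop:hom} ``naturally extends,'' and that extension is exactly your pointwise use of \Cref{obs:diam-preserved-homo} together with \Cref{obs:norm}, with the target allowed to depend on the coloring. The paper also notes a second route, in which the disjoint union of the graphs in $\mathcal{H}$ is a single target of norm $\max_{(H,\pi)\in\mathcal{H}}\|\pi\|$, so the statement reduces directly to \Cref{prop:hom}.
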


Note however that this is useless in our case, since we are interested in targets with small norm, not with few vertices. In particular, the disjoint union of all graphs in a target set $\mathcal{H}$ for $\mathcal{C}$ is a target for $\mathcal{C}$ of norm exactly $\max_{(H,\pi)\in\mathcal{H}} \|\pi\|$.

\section{Linear bound in the acyclic chromatic number}\label{sec:chia}

This section is devoted to the proof of~\Cref{thm:acyclic}, giving linear bounds on the inversion diameter in terms of the acyclic and star chromatic numbers.
Recall that the \emph{acyclic chromatic number} $\chi_a(G)$ (resp. \emph{star chromatic number} $\chi_s(G)$) of a graph $G$ is defined as the minimum number of colors required in a proper coloring such that the union of any two color classes induces a forest (resp. a star forest).
To show~\Cref{thm:acyclic}, we rely on the 2-edge-coloring reformulation of inversion diameter, as introduced in~\Cref{ssec:2edgecol}.
Then, according to \Cref{prop:hom}, to obtain a bound on the inversion diameter
of graphs with acyclic chromatic number
at most $k$, it suffices to have a bound for a target of this class. That is, a $2$-edge-colored graph onto which every $2$-edge-coloring of a graph $G$ with $\chi_a(G) \leq k$ admits a homomorphism.
It turns out that such targets already exist and are known as the Zielonka graphs~\cite{raspaud1994good,ochem2017homomorphisms}.
\begin{definition}
    The 2-edge-colored Zielonka graph $(SZ_k,\pi_k)$ is a complete $k$-partite graph, whose parts $X_1,\ldots,X_k$ have size $2^{k-1}$. The vertices of $X_i$ are labeled as $(i,\alpha_1, \dots \alpha_k)$ where $1\leq i \leq k$, $\alpha_i=0$, and $\forall j\neq i, \alpha_j \in \{-1,1\}$. For every $i\neq j$, there is an edge between $(i,\alpha_1, \dots \alpha_k)$ and $(j,\beta_1, \dots \beta_k)$ colored $0$ if $\alpha_j\beta_i=1$ and $1$ otherwise (that is, colored
    $\frac{1}{2}(1-\alpha_j\beta_i)$).
\end{definition}

\begin{theorem}[\cite{raspaud1994good}]
Every $2$-edge coloring of a graph $G$ with $\chi_a(G)\leq k$ admits a homomorphism to $(SZ_k,\pi_k)$.
\end{theorem}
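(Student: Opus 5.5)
The plan is to read a homomorphism to $(SZ_k,\pi_k)$ directly off an acyclic $k$-coloring of $G$, choosing each coordinate of the image labels separately, one pair of color classes at a time. Fix a $2$-edge-coloring $\pi$ of $G$ and an acyclic coloring $c:V(G)\to[k]$. Each vertex $u$ with $c(u)=i$ will be sent to a vertex $(i,\alpha_1(u),\dots,\alpha_k(u))$ of $X_i$. So we set $\alpha_i(u)=0$, and it remains to choose the signs $\alpha_j(u)\in\{-1,1\}$ for every $j\neq i$.

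First we check what is required. Since $c$ is proper, every edge $uv$ of $G$ joins two distinct classes, say $c(u)=i\neq j=c(v)$. Then $f(u)f(v)$ is an edge of $SZ_k$, because $SZ_k$ is complete $k$-partite. The color condition $\pi(uv)=\frac12(1-\alpha_j(u)\alpha_i(v))$ is equivalent to
\[
\alpha_j(u)\,\alpha_i(v)=(-1)^{\pi(uv)} .
\]
The key point is that this constraint involves only the coordinate $j$ of vertices in class $i$ and the coordinate $i$ of vertices in class $j$. Hence the whole problem splits into independent subproblems, one for each unordered pair $\{i,j\}$. In the subproblem for $\{i,j\}$, the unknowns are one sign $s(w)\in\{-1,1\}$ for each vertex $w$ of $G_{ij}:=G[c^{-1}(i)\cup c^{-1}(j)]$. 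This sign is $\alpha_j(w)$ if $c(w)=i$, and $\alpha_i(w)$ if $c(w)=j$. The constraints are $s(u)s(v)=(-1)^{\pi(uv)}$ for every edge $uv$ of $G_{ij}$.

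Next we solve each subproblem. By acyclicity of $c$, the graph $G_{ij}$ is a forest. In each tree we root at an arbitrary vertex and give the root sign $1$. We then propagate down the tree, setting $s(v)=s(u)(-1)^{\pi(uv)}$ whenever $v$ is a child of $u$. Every edge of a tree is a parent--child edge, so every constraint is satisfied, and no conflict can occur since there are no cycles. A vertex with no neighbor in the other class is an isolated tree and simply keeps sign $1$.

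Finally we assemble the solutions. Each coordinate $\alpha_j(u)$ with $j\neq c(u)$ is determined by exactly one pair, namely $\{c(u),j\}$, so combining all the subproblem solutions yields a well-defined map $f$. This $f$ preserves adjacency and edge colors, and is therefore the desired homomorphism. I do not expect a real obstacle. The only step needing care is the observation that each constraint involves a single pair of classes and that distinct pairs use disjoint coordinates. That observation is exactly what allows the forest propagation to be carried out independently for each pair.
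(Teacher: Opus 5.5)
Your argument is correct and complete. The color constraint on an edge between classes $i$ and $j$ involves only the $j$-th coordinate of the class-$i$ endpoint and the $i$-th coordinate of the class-$j$ endpoint, so the problem decouples over pairs of classes, and each pair is solved by sign propagation on the forest $G_{ij}$. The paper does not reprove this statement but cites \cite{raspaud1994good}, and your proof is essentially the standard argument from that source, adapted from orientations to $2$-edge-colorings.
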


We shall prove the following:

\begin{theorem}\label{sz_diam}
    For $k\geq 3$, $\|\pi_k\|\leq 4k-7$. 
\end{theorem}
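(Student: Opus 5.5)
By \Cref{obs:main}, applied to the single $2$-edge-coloring $\pi_k$ of $SZ_k$, it suffices to give a $t$-inversion of $(SZ_k,\pi_k)$ with $t=4k-7$. That is, we assign to every vertex $u$ a vector $\mathbf u\in\mathbb F_2^t$ so that $\mathbf u\cdot\mathbf v=\pi_k(uv)$ for every edge $uv$.

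The first step is to linearize the coloring. For a vertex $u=(i,\alpha_1,\dots,\alpha_k)\in X_i$, write $a_l(u)=\frac12(1-\alpha_l)\in\mathbb F_2$ for $l\neq i$. Then for $u\in X_i$ and $v\in X_j$ with $i\neq j$ we have
\[
\pi_k(uv)=a_j(u)+a_i(v)\pmod 2 .
\]
So the color of an edge is a sum of two terms, each depending on only one endpoint and the part of the other endpoint. This suggests the following construction. Give each vertex $u\in X_i$ two vectors $\mathbf x(u),\mathbf y(u)\in\mathbb F_2^k$, with $x_l(u)=a_l(u)$ for $l\neq i$, $x_i(u)$ free (say $0$), and $\mathbf y(u)=e_i$ the indicator of its own part. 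Then the bilinear form $B(u,v)=\mathbf x(u)\cdot\mathbf y(v)+\mathbf y(u)\cdot\mathbf x(v)$ evaluates to $x_j(u)+x_i(v)=\pi_k(uv)$ on every edge.

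The remaining task is to realize $B$ as an honest dot product. The form $B$ is a sum of $k$ hyperbolic pieces $x_ly'_l+y_lx'_l$. The identity
\[
xx'+yy'+(x+y)(x'+y')=xy'+yx' \quad\text{over } \mathbb F_2
\]
shows that each hyperbolic pair can be simulated by three coordinates $(x_l,y_l,x_l+y_l)$. This already gives a $3k$-inversion, which is at most $4k-7$ once $k\ge 7$.

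For the small cases $3\le k\le 6$ I will save coordinates using the available slack, and I expect this to be the main obstacle. The sources of slack are:
\begin{itemize}
\item products within a single part are unconstrained, since there are no edges inside $X_i$;
\item the coordinates $x_i(u)$ for $u\in X_i$ are free;
\item the vectors $\mathbf y$ take only $k$ values;
\item one may pass to any other Gram factorization of a symmetric matrix agreeing with $B$ off the diagonal blocks.
\end{itemize}
For instance, one can replace the $k$ vectors $e_i$ by vectors spanning a smaller space, or merge the correction coordinates $x_l+y_l$ across different $l$. I would first try to find, for each small $k$, a symmetric matrix realizing the off-diagonal-block constraints whose rank (plus one if it is alternating) is at most $4k-7$. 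This can be verified by direct computation on the explicit description of $SZ_k$. For $k=3$ the target is $5$.

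Should the uniform $3k$ construction fail somewhere, the fallback is an inductive argument. Deleting part $X_k$ leaves a graph with a homomorphism to $SZ_{k-1}$, obtained by dropping $\alpha_k$. Reattaching $X_k$ should cost at most $4$ new coordinates, namely one hyperbolic pair plus a parity fix. With a base case $k=3$ handled by hand with $5$ coordinates, this gives the bound $4k-7$ exactly.
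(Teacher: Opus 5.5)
\textbf{Verdict: genuine gap for $3\le k\le 6$, though it is easy to close inside your own framework.}

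Your linearization $\pi_k(uv)=a_j(u)+a_i(v)$ and the resulting $3k$-inversion are correct. This explicit bilinear realization is a genuinely different route from the paper's, which performs $2k-2$ inversions to create twins, deletes them, and then handles the residual graph $G_k$ by a further induction.

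\textbf{The gap.} You have $3k\le 4k-7$ only when $k\ge 7$, and for $k\in\{3,4,5,6\}$ the proposal contains no argument.
\begin{itemize}
\item The sources of slack are listed but never used.
\item The $5$-coordinate base case for $k=3$ is asserted but not exhibited.
\item The fallback induction does not work as stated. If you pull back an arbitrary inversion of $SZ_{k-1}$, a vertex $u\in X_k$ needs $\mathbf{u}\cdot\mathbf{v}=a_i(u)+a_k(v)$ for every $v\in X_i$. Fresh coordinates can produce the $a_k(v)$ term. The $a_i(u)$ term, however, requires a vector whose product with $\mathbf{v}$ depends only on the part of $v$, and a general inversion admits no such vector. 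For instance, for the $4$-cycle $SZ_2$, the valid $2$-inversion giving $(1,0),(0,1)$ to $X_1$ and $(0,1),(1,0)$ to $X_2$ has no vector with product $1$ on $X_1$ and $0$ on $X_2$. Nothing in your sketch explains how four new coordinates would supply this, so the induction hypothesis would have to carry extra structure, not just an arbitrary inversion.
\end{itemize}

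\textbf{How to close it.}

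For $k\ge 4$: on every edge $\mathbf{y}(u)=e_i$ and $\mathbf{y}(v)=e_j$ with $i\neq j$, so $y_l(u)y_l(v)=0$ and the middle coordinate of each triple is useless. The pairs $(x_l,x_l+y_l)$ already realize $B$. This gives a $2k$-inversion, and $2k\le 4k-7$ for all $k\ge 4$.

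For $k=3$: you must beat $6$, and an affine assignment does it. Take
\begin{itemize}
\item $\mathbf{m}_{12}=(1,1,0,0,0)$, $\mathbf{m}_{13}=(0,0,0,1,1)$, $\mathbf{m}_{23}=\mathbf{m}_{12}+\mathbf{m}_{13}$;
\item $\mathbf{c}_1=(1,0,0,1,0)$, $\mathbf{c}_2=(0,1,1,0,0)$, $\mathbf{c}_3=(1,1,1,1,0)$;
\item $\mathbf{u}=\mathbf{c}_i+\sum_{l\neq i}a_l(u)\,\mathbf{m}_{\{i,l\}}$ for $u\in X_i$.
\end{itemize}
The three vectors $\mathbf{m}_{12},\mathbf{m}_{13},\mathbf{m}_{23}$ have even weight and are pairwise orthogonal. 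Also $\mathbf{c}_i\cdot\mathbf{c}_j=0$ for $i\neq j$, and $\mathbf{c}_i\cdot\mathbf{m}_e=1$ exactly when $i\in e$. Hence $\mathbf{u}\cdot\mathbf{v}=a_j(u)+a_i(v)$ for $u\in X_i$, $v\in X_j$, which is a $5$-inversion.

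With these two additions your approach proves the theorem. For $k\ge 4$ it even gives the stronger bound $\|\pi_k\|\le 2k$.
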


To prove~\Cref{sz_diam}, we proceed in two steps. First, we invert some parts of $\pi_k$ to create many twins, namely non-adjacent vertices with the same (colored) neighborhoods. The following observation allows us to remove them without changing the norm.

\begin{observation}
\label{lem:twins}
    Let $(G,\pi)$ be a graph, and $v$ a vertex with a twin. Then $\|\pi\|=\|\pi|_{E(G-v)}\|$.
\end{observation}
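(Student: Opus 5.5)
We prove the two inequalities $\|\pi|_{E(G-v)}\| \le \|\pi\|$ and $\|\pi\| \le \|\pi|_{E(G-v)}\|$ separately. Throughout, "twin" means a vertex $w \neq v$, non-adjacent to $v$, with $N(w)=N(v)$ and $\pi(vx)=\pi(wx)$ for every $x\in N(v)$. Both directions use the linear-algebraic characterization of the norm, namely the graph-level version of \Cref{obs:main}: $\|\pi\|$ is the least $t$ such that $\pi$ admits a $t$-inversion. This version holds with the same proof, since a sequence of $t$ inversions $X_1,\dots,X_t$ corresponds to the vectors $\mathbf{u}=(\mathbb{1}_{u\in X_i})_i$. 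It also matches the way the proof of \Cref{obs:diam-preserved-homo} uses the notion.

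For the easy direction, $\|\pi|_{E(G-v)}\| \le \|\pi\|$, we restrict. The inclusion $G-v\hookrightarrow G$ is a homomorphism of $2$-edge-colored graphs from $(G-v,\pi|_{E(G-v)})$ to $(G,\pi)$, so \Cref{obs:diam-preserved-homo} gives the bound directly. Alternatively, we can simply restrict a $t$-inversion of $\pi$ to $V(G)\setminus\{v\}$.

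For the reverse direction, $\|\pi\| \le \|\pi|_{E(G-v)}\|$, we fold $v$ onto its twin. Let $w$ be a twin of $v$, and define $f:V(G)\to V(G-v)$ by $f(v)=w$ and $f(u)=u$ otherwise. We check that $f$ is a homomorphism from $(G,\pi)$ to $(G-v,\pi|_{E(G-v)})$. Edges not containing $v$ are mapped to themselves with the same color. An edge $vx$ has $x\neq w$, since $v$ and $w$ are non-adjacent, so it maps to $wx$. This is an edge because $N(w)=N(v)$, and it has the same color by the twin condition. Applying \Cref{obs:diam-preserved-homo} again gives the inequality.

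The only delicate point is the definition of twin. Non-adjacency is needed so that no edge $vw$ collapses to a loop, and equality of the colored neighborhoods is exactly what makes $f$ color-preserving. Once these are fixed, both directions are immediate consequences of \Cref{obs:diam-preserved-homo}.
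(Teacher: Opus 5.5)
Your proof is correct and is essentially the paper's argument. For the reverse direction, the paper sets $J_i=H_i\cup\{v\}$ exactly when the twin lies in $H_i$. This is precisely lifting an inversion sequence of $G-v$ through your folding map $f$, i.e.\ the composition $\mathbf{i}\circ f$ from the proof of \Cref{obs:diam-preserved-homo}, and your easy direction matches the paper's restriction argument.
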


\begin{proof}
    The inequality $\|\pi|_{E(G-v)}\|\leq \|\pi\|$ is clear. For the converse one, let $H_1, \dots, H_k$ be an inversion sequence rendering $(G-v,\pi|_{E(G-v)})$ colored $0$.
    Let $u$ be the twin of $v$.
    Construct sets $J_1, \dots, J_k$ such that $J_i=H_i \cup\{v\} $ if $u \in H_i$,
    and $J_i=H_i$ otherwise. Now $J_1,\ldots, J_k$ is an inversion sequence ending with $(G,\pi)$ colored $0$, which concludes. 
\end{proof}

Using this observation, we build a subgraph $(G_k,\sigma_k)$ of $(SZ_k,\pi_k)$ such that $\| \pi_k\|\leq \|\sigma_k\|+2k-2$. We then conclude the proof of Theorem~\ref{sz_diam} by showing that $\|\sigma_k\|\leq 2k-5$.

Let $(G_k,\sigma_k)$ the subgraph of $(SZ_k,\pi_k)$ induced by all
the vertices $(i,\alpha)$ such that $\alpha_j = 1$ for all $j \in [i+1]\setminus \{i\}$. Theorem~\ref{sz_diam} is then a consequence of the following.

\begin{lemma}
\label{lem:1}
    For every $k>0$, in $2k-2$ inversions, one can transform $\pi_k$ into a coloring where the subgraph $G_k$ is colored with $\sigma_k$, and all remaining vertices have a twin in $(G_k,\sigma_k)$.
\end{lemma}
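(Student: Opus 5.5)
The plan is to build everything from one elementary move, which costs two inversions. For $j\in[k]$ and a set $B$ of vertices disjoint from $X_j$, the aim is to flip exactly the edges between $X_j$ and $B$ and nothing else. Inverting $X_j\cup B$ flips every edge inside $X_j\cup B$. Since $X_j$ is independent in $SZ_k$, these are the edges between $X_j$ and $B$ together with the edges inside $B$. Inverting $B$ afterwards restores the edges inside $B$. So the move costs two inversions in general, and only one when $B$ is itself independent (for instance $B\subseteq X_i$ for a single $i$), because then the second inversion does nothing.

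Next I use what the colors look like. The edge between $(i,\alpha)$ and $(j,\beta)$ has color $\frac12(1-\alpha_j\beta_i)$. So, restricted to $X_j$, the colored neighborhood of $(i,\alpha)$ depends only on $\alpha_j$, and changing $\alpha_j$ from $1$ to $-1$ complements all colors towards $X_j$. Hence if we apply the move for $j$ with
\[B_j=\{(i,\alpha) : i\neq j,\ \alpha_j=-1,\ j\in[i+1]\setminus\{i\}\},\]
then every vertex with $\alpha_j=-1$ and $j\in[i+1]\setminus\{i\}$ now sees $X_j$ exactly as the vertex obtained by setting $\alpha_j=1$ sees it. That vertex is untouched, since it is not in $B_j$. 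The edges between $X_j$ and vertices outside $B_j$ are unchanged.

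I perform the moves for $j=1,\dots,k$. The moves commute, because each is a prescribed set of edge flips and the flipped edge sets only add up mod 2. For a vertex $v=(i,\alpha)$, let $\hat v$ be the vertex obtained by setting $\alpha_j=1$ for every $j\in[i+1]\setminus\{i\}$. By construction $\hat v\in G_k$, $\hat v$ lies in the same part as $v$ (so they are non-adjacent), and $\hat v$ belongs to no $B_j$. Three things then need checking.
\begin{itemize}
\item The colors inside $G_k$ are still $\sigma_k$. This holds because an edge is flipped only if one endpoint is in some $B_j$, and vertices of $G_k$ lie in no $B_j$.
\item For every part $X_j$, $v$ and $\hat v$ see $X_j$ identically in the final coloring. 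Here I check the cases $j$ normalized and $j$ not normalized. I must also check the edges from $X_j$-vertices that are themselves in other sets $B_{i}$. The flips applied by move $i$ on the edge $v w$ with $w\in X_j$ depend only on $w$'s membership, so they are identical for $v$ and $\hat v$.
\item Therefore $v$ and $\hat v$ are twins.
\end{itemize}

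This gives $2k$ inversions, and the remaining task is to save two. The first saving is immediate. For $j=k$, the condition $k\in[i+1]\setminus\{i\}$ with $i\neq k$ forces $i=k-1$, so $B_k\subseteq X_{k-1}$ is independent and the move costs one inversion. The main obstacle is the second saving. My first attempt is to merge the trailing inversions ``invert $B_j$'' of two moves. Alternatively, I would exploit that for $j=k-1$ the set $B_{k-1}$ meets only $X_k$ and $X_{k-2}$, which should allow a cheaper bespoke pair of inversions. Failing that, I would choose the order of the moves so that one correction inversion cancels against the next move's first inversion. I would verify the resulting count $2k-2$ case by case for small $k$ before writing the general argument.
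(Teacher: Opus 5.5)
Your verification is sound as far as it goes. The two-inversion move flips exactly the edges between $X_j$ and $B$, and the moves commute. The final color of an edge $vw$ with $v\in X_i$, $w\in X_j$ is $\pi_k(vw)+[v\in B_j]+[w\in B_i]$, where $[P]$ is $1$ if $P$ holds and $0$ otherwise. So $G_k$ is untouched and $v,\hat v$ are twins.

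\textbf{The gap is the count.} What you actually prove is $2(k-1)+1=2k-1$ inversions for every $k\ge 3$; only $k\le 2$ comes out at $2k-2$. The missing inversion matters: it is what gives $4k-7$ rather than $4k-6$ in Theorem~\ref{sz_diam}. Your ``second saving'' is only a list of things to try, and the natural local ones fail.
\begin{itemize}
\item The corrections ``invert $B_1$'' and ``invert $B_2$'' cannot be merged. Take $u=(2,\alpha)$ with $\alpha_1=-1$, $v=(3,\alpha)$ with $\alpha_1=\alpha_2=-1$, and $w=(1,\alpha)$ with $\alpha_2=-1$. Then $uv$ and $vw$ must flip but $uw$ must not, while any single inversion flipping $uv$ and $vw$ contains $u,v,w$ and hence flips $uw$.
\item No two inversions of your scheme cancel. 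No correction set $B_j$ equals any $X_{j'}\cup B_{j'}$, since the latter always contains vertices outside $B_j$.
\item The moves for $j=k-1$ and $j=k$ cannot be performed jointly with two inversions. Test a vertex of $B_{k-1}\cap X_{k-2}$, one vertex of $X_{k-1}$ inside $B_k$ and one outside it, and one vertex of $X_k$ inside $B_{k-1}$ and one outside it. In $\mathbb{F}_2^2$ both $X_k$-vectors are forced onto the same line orthogonal to the $X_{k-2}$-vector, which is a contradiction.
\end{itemize}
So a genuinely non-local ingredient is needed.

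\textbf{The paper's route.} The paper gets this ingredient from the isomorphism $\varphi\colon(SZ_k,\pi_k)\to(SZ_k,1-\pi_k)$, which makes a global complementation free. On $1-\pi_k$ it inverts $A$ and then $B=V\setminus A=-A$. Relative to $\pi_k$, this flips exactly the edges between $A$ and $B$; for $k\ge 3$ that would cost three inversions if done directly. It turns every anti-twin pair $u,-u$ into a twin pair and leaves a copy of $(SZ_{k-1},\pi_{k-1})$ on $A^+$. Induction then gives $2+2(k-2)$.

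\textbf{Repairing your own route.} You can instead keep your flip set and realize it in one go as a $(2k-2)$-inversion in the sense of Observation~\ref{obs:main}.
\begin{itemize}
\item Index the coordinates by $a_1,\dots,a_{k-1},b_1,\dots,b_{k-1}$, and let $\mathbf{e}_c$ be the unit vector of coordinate $c$.
\item Put $\mathbf{h}_t=\mathbf{e}_{a_t}+\mathbf{e}_{b_t}$ and $\mathbf{p}_i=\mathbf{e}_{b_{i-1}}+\mathbf{e}_{a_i}$, omitting $\mathbf{e}_{b_0}$ and $\mathbf{e}_{a_k}$.
\item Let $\mathbf{q}_{i,m}=\mathbf{h}_m+\dots+\mathbf{h}_{i-1}$ for $m<i$, and $\mathbf{q}_{i,i+1}=\mathbf{h}_i$.
\item Give $v\in X_i$ the vector $\mathbf{p}_i+\sum_{m:\,v\in B_m}\mathbf{q}_{i,m}$.
\end{itemize}
The $\mathbf{p}_i$ have pairwise disjoint supports, hence are pairwise orthogonal. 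The $\mathbf{h}_t$ are pairwise orthogonal and of even weight. Also $\mathbf{p}_j\cdot\mathbf{h}_t=[j\in\{t,t+1\}]$, so by telescoping $\mathbf{p}_j\cdot\mathbf{q}_{i,m}=[j=m]$ whenever $j\neq i$. Hence $\mathbf{v}\cdot\mathbf{w}=[v\in B_j]+[w\in B_i]$ for all $v\in X_i$, $w\in X_j$, $i\neq j$. The $2k-2$ coordinate inversions therefore produce exactly your final coloring, and your twin argument completes the proof. This version even leaves $G_k$ colored with $\sigma_k$ literally, not just up to isomorphism.
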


\begin{proof}
    We proceed by induction on $k$. If $k=1$, then $(SZ_k,\pi_k)$ and $(G_k,\sigma_k)$ are the $1$-vertex graph, and the inequality holds. Assume now $k>1$. 

    Given a vertex $u=(i,\alpha_1,\ldots,\alpha_k)$ in $SZ_k$, let us denote by $-u$ the vertex $(i,-\alpha_1,\ldots,-\alpha_k)$. Observe that by construction $u$ and $-u$ are anti-twins, that is, for every vertex $v$, either $v$ is adjacent to none of $u$ and $-u$, or it is adjacent to both, with edges of different colors.

    To prove the induction step, we first note that we may as well consider
    the opposite coloring $(SZ_k,1-\pi_k)$.
    Indeed, $(SZ_k,1-\pi_k)$ and $(SZ_k,\pi_k)$ are isomorphic, witnessed by the
    map  $\varphi : (i,\alpha_1,\ldots,\alpha_k) \mapsto (i,\alpha_1,\ldots,\alpha_i,-\alpha_{i+1},\ldots,-\alpha_k)$, as for $i<j$, $1- \frac{1}{2}(1-\alpha_j\beta_i) = \frac{1}{2}(1-(-\alpha_j)\beta_i)$.
    
    Let us partition the vertices of $SZ_k$ into two parts $A,B$ with
    \[
    A := \{ (1,\alpha); \alpha_2 = -1\}\cup\{ (i,{\alpha}); i\neq 1 \text{ and } \alpha_1=1\}.
    \] Observe that for every $u\in A$, $-u\in B$. Let us invert $A$ then $B$ in $(SZ_k,1-\pi_k)$, obtaining a new coloring $(SZ_k, \rho)$. Each vertex $u\in A$ is a twin of $-u$ in $(SZ_k,\rho)$, since for every $v\in A$ (resp. $v\in B$), $uv$ (resp. $-uv$) changed color but not $-uv$ (resp. $uv$). We may then forget about the vertices in $B$ by Lemma~\ref{lem:twins}.
    
    Let $A^+$ be the set of vertices with $\alpha_1=1$. In $(SZ_k,1-\pi_k)$, vertices of $A^+$ induce a copy of $(SZ_{k-1},1-\pi_{k-1})$, hence they induce a copy of $(SZ_{k-1},\pi_{k-1})$ in $(SZ_k, \rho)$. By induction, one can get a coloring where $G_{k-1}$ is colored with $\sigma_{k-1}$ and all remaining vertices have a twin in $(G_k,\sigma_k)$ in $2k-4$ inversions. Note that if two vertices of $A^+$ have the same colored neighborhood in $A^+$, then they are twins in $(SZ_k, \rho)$. Indeed, they must lie in the same $X_i$, and moreover if $u=(1,{\alpha})\in A\setminus A^+$, the edges linking them to $u$ have the same color (depending only on $\alpha_i$).
    
    Moreover, adding the vertices of $A\setminus A^+$ to the subgraph $(G_{k-1},\sigma_{k-1})$ yields precisely the graph $(G_k,\sigma_k)$ (since $A$ has been inverted). This concludes the induction.
\end{proof}

\begin{lemma}
    \label{lem:2}
    For $k\geq 3$, $\|\sigma_k\|\leq 2k-5$.
\end{lemma}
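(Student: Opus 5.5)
The plan is to exploit the very rigid structure of $(G_k,\sigma_k)$ and argue by induction on $k$, adding two coordinates per step. By \Cref{obs:main} applied to the single coloring $\sigma_k$, it suffices to exhibit a $(2k-5)$-inversion of $\sigma_k$.

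\textbf{Structure of $\sigma_k$.} A vertex $(i,\alpha)$ of $G_k$ has $\alpha_1=\dots=\alpha_{i-1}=1$, $\alpha_{i+1}=1$, and $\alpha_{i+2},\dots,\alpha_k$ free. So $|X_i\cap V(G_k)|=2^{\max(k-i-1,0)}$, and $X_{k-1}$ and $X_k$ are single vertices. Take $u=(i,\alpha)$ and $v=(j,\beta)$ with $i<j$. Then $\beta_i=1$, so $\sigma_k(uv)=\frac12(1-\alpha_j)$ depends only on $u$ and on the index $j$. Write $c_j(u)\in\mathbb{F}_2$ for this bit. Then $c_{i+1}(u)=0$, and $c_j(u)$ for $j\ge i+2$ ranges over all patterns as $u$ ranges over $X_i\cap V(G_k)$.

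In words: every vertex of $X_i$ sees each later part monochromatically, with an arbitrary prescribed colour per part, except that it sees $X_{i+1}$ in colour $0$. Moreover, the vertices of $G_k$ lying in parts $2,\dots,k$ induce exactly a copy of $(G_{k-1},\sigma_{k-1})$, after shifting indices. This is because $\alpha_1=1$ is already forced for those vertices.

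\textbf{Induction.} The base case $k=3$ is a direct check: $G_3$ is a path/triangle-like graph on three vertices, and one inversion suffices. The naive inductive step would show $\|\sigma_k\|\le\|\sigma_{k-1}\|+2$. This is done by extending a $t$-inversion $(\mathbf y_v)$ of the copy of $\sigma_{k-1}$ on parts $2,\dots,k$ by two new coordinates. Vertices $v\in X_j$ get $(\mathbf y_v,p_j,q_j)$, and each $u\in X_1$ gets $(\mathbf z_u,s_u,t_u)$. We then need
\[
\mathbf z_u\cdot\mathbf y_v+s_up_j+t_uq_j=c_j(u)\qquad\text{for all } v\in X_j,\ j\ge 2 .
\]
Since the right-hand side depends only on the part $j$, this forces a strengthened induction hypothesis. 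I would require the $t$-inversion of $\sigma_{k-1}$ to have the property that, for every function $g$ from parts to $\mathbb{F}_2$ that vanishes on the first part, there is a vector $\mathbf z$ with $\mathbf z\cdot\mathbf y_v=g(\text{part of }v)$ for all $v$, up to a correction absorbable by the two extra coordinates. Equivalently, the vectors $\mathbf y_v$ of a part, minus a common "part vector", should lie in a subspace orthogonal to something, so that the part vectors are linearly independent modulo that subspace.

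The two new coordinates $(p_j,q_j)$ are used for two things. First, they correct the value $g$ on the two small parts $X_{k-1}$ and $X_k$ and on $X_2$ (where $c_2(u)=0$ is forced). Second, they re-establish the strengthened hypothesis for the new part $X_1$: its vertices get a common part-component $(0,\dots,0,1,0)$, say, plus a varying component.

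\textbf{Main obstacle.} The crux is formulating the strengthened hypothesis so that it is both preserved and strong enough. The difficulty is that vertices of $X_j$ differ in how they must see later parts, which pushes their vectors apart. Meanwhile, earlier vertices need to see all of $X_j$ uniformly, which pushes them together.

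What I would try first is the following decomposition:
\[
\mathbf y_v=\mathbf a_j+\mathbf b_v,
\]
where the $\mathbf b_v$ live in a subspace $W$ that is orthogonal to all part vectors $\mathbf a_{j'}$ with $j'\le j$. Earlier vertices then pair with $\mathbf a_j$ only, and this gives the freedom required. I would then check that one new hyperbolic pair of coordinates per added part suffices to maintain this invariant. That check would give the count $2(k-3)+1=2k-5$.
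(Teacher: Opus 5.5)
\textbf{There is a genuine gap, and the scheme you outline provably fails at $k=5$.} Your structural analysis of $\sigma_k$ is correct, but the proposal stops where the proof has to happen: the strengthened hypothesis is never stated, and its preservation is only announced.

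To see the failure, write $\sigma_4$ with parts $X_1=\{a_{xy}\}$ ($x,y$ the colours towards $X_3,X_4$), $X_2=\{b_0,b_1\}$, $X_3=\{c\}$, $X_4=\{d\}$. (Incidentally, $G_3$ has four vertices, $|X_1|=2$, with a single $1$-edge.)
\begin{itemize}
\item The $1$-inversion of the copy of $\sigma_3$ on $X_2\cup X_3\cup X_4$ is unique: $\mathbf b_1=\mathbf d=(1)$ and $\mathbf b_0=\mathbf c=(0)$.
\item Your per-part extension is then forced up to swapping the two new coordinates. Serving the four patterns of $X_1$ requires $(p_2,q_2)=(0,0)$ and $(p_3,q_3),(p_4,q_4)$ linearly independent. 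The $0$-edge $cd$ requires them orthogonal, so they are $(1,0)$ and $(0,1)$. This yields $\mathbf a_{xy}=(0,x,y)$, $\mathbf b_0=\mathbf 0$, $\mathbf b_1=(1,0,0)$, $\mathbf c=(0,1,0)$, $\mathbf d=(1,0,1)$.
\item Now place this on parts $2,\dots,5$ of $\sigma_5$ and append per-part pairs $P_2,\dots,P_5$. We have $\mathbf b_0=\mathbf 0$, $\mathbf c=\mathbf a_{10}$ and $\mathbf d=\mathbf b_1+\mathbf a_{01}$. So a new vertex of $X_1$ that must see parts $3,4,5$ in colours $g_3,g_4,g_5$ needs its two new coordinates $\mathbf s\in\mathbb F_2^2$ to satisfy $\mathbf s\cdot P_3=g_3$, $\mathbf s\cdot(P_2+P_4)=g_4$ and $\mathbf s\cdot(P_2+P_3+P_5)=g_3+g_5$.
\item That is, a linear map $\mathbb F_2^2\to\mathbb F_2^3$ would have to be onto, which is impossible.
\end{itemize}
So ``dimension $1$ at $k=3$, then two per-part coordinates per step'' fails at $k=5$, whatever invariant you impose.

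There is a second, independent problem. The decomposition $\mathbf y_v=\mathbf a_j+\mathbf b_v$ with only $\mathbf b_v\perp\mathbf a_{j'}$ ($j'\le j$) does not make earlier vertices ``pair with $\mathbf a_j$ only''. The term $\mathbf b_u\cdot\mathbf b_v$ survives unless the relevant $\mathbf b$'s span a totally isotropic space.

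\textbf{How the paper avoids this.} It peels the \emph{last} part instead of the first. $X_k$ is a single vertex $u$; let $Y$ be its set of $1$-neighbours. Inverting $Y\cup\{u\}$ and then $Y$ recolours exactly the edges at $u$ to $0$. After that, each $(i,\alpha)$ with $i<k$ is a twin of the vertex obtained by flipping $\alpha_k$. Deleting twins leaves $\sigma_{k-1}$ plus a vertex with only $0$-edges, so $\|\sigma_k\|\le\|\sigma_{k-1}\|+2$. No strengthening is needed, because one manipulates inversion sequences instead of extending a fixed inversion.

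\textbf{How to repair your route.} If you prefer linear algebra, give the inversion explicitly rather than inductively. Use the $2k-5$ coordinates $\mathbf e_3,\dots,\mathbf e_k,\mathbf f_4,\dots,\mathbf f_k$, and set $\mathbf e_1=\mathbf e_2=\mathbf 0$, $\mathbf w_3=\mathbf e_3$, and $\mathbf w_j=\mathbf e_j+\mathbf f_j$ for $j\ge 4$. Assign $\mathbf u=\mathbf e_i+\sum_{j\ge i+2}c_j(u)\mathbf w_j$ to $u\in X_i$. For $u\in X_i$, $v\in X_j$ with $i<j$:
\begin{itemize}
\item the terms $\mathbf e_i\cdot\mathbf e_j$ and $\mathbf e_i\cdot\mathbf w_m$ vanish trivially;
\item $\sum_l c_l(u)\,\mathbf w_l\cdot\mathbf e_j=c_j(u)$, using $c_{i+1}(u)=0$;
\item the cross terms $\mathbf w_l\cdot\mathbf w_m$ have $m\ge j+2\ge 4$, so they vanish by disjoint supports or even weight.
\end{itemize}
Here the savings come from the front parts ($X_1,X_2$ need no part vector and $\mathbf w_3$ needs no partner), not from a cheap base case.
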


\begin{proof}
    We proceed by induction on $k$. The case $k=3$ follows from $\sigma_3$ having a single edge labeled $0$.
    Assume $k>3$. Consider the vertex $u$ of $G_k$ in $X_k$, and let $Y$ its neighbors $v$ such that $uv$ has color $1$. Invert $Y\cup\{u\}$ then $Y$. This ensures that all edges incident to $u$ are now colored $0$, and the others did not change. After these two inversions, each vertex $(i,\alpha_1,\ldots,\alpha_k)$ with $i<k$ is now a twin of $(i,\alpha_1,\ldots,\alpha_{k-1},-\alpha_k)$. Deleting these twins yields a copy of $(G_{k-1},\sigma_{k-1})$, hence by Lemma~\ref{lem:twins}, $\|\sigma_k\|\leq 2+\|\sigma_{k-1}\|\leq 2k-5$ using induction.
\end{proof}

This concludes the proof of \Cref{sz_diam}.
Then, applying \Cref{prop:hom} to \Cref{sz_diam} achieves to bound $\diam$ in terms of the acyclic (and star) chromatic number.
\thmacyclic*

\section{Planar graphs}
\label{sec:planar}

This section is devoted to our results on planar graphs, summarized in \Cref{table:bounds}.
We begin by providing two constructions, witnessing a lower bound of $6$ for the diameter of planar graphs in~\Cref{ssec:lb-planar-6}, and $4$ for that of planar graphs with girth $4$ in~\Cref{ssec:lb-planar-girth-4}.
Then, we present several upper bounds.
These bounds rely on two kinds of arguments: standard discharging for girths 4 and 7, and another use of the homomorphism framework (\Cref{prop:hom}) for girths 5 and 6.

Throughout this section, we will use the following conventions.
In our figures, we represent edges colored $0$ with a solid black line, and those colored $1$ with a red dotted line.
Given an integer $d$, a vertex $v$ in a graph is respectively a $d$-vertex, a $d^+$ vertex, or a $d^-$-vertex if it has degree $d$, at least $d$, or at most $d$ respectively. We extend this terminology to define $d$-faces, $d^+$-faces and $d^-$-faces according to their degree analogously.

\subsection{General lower bound of 6}\label{ssec:lb-planar-6}

Here we construct a planar graph with inversion diameter at least 6, improving the previously known lower bound of 5. We use the framework given by Observation~\ref{obs:main}, hence our goal is actually to provide a $2$-edge-coloring and to show that no $5$-inversion exists. The construction relies on several gadgets restricting the available 5-dimensional vectors on some vertices in every $5$-inversion. Each gadget is then glued to the vertices of the next one, until no more vectors are available, which ensures that no 5-inversion exists, and thus that the inversion diameter of the resulting graph is at least 6. 

The first gadget $G_1$ is an edge colored 1, which forbids the null vector (having all coordinates equal to 0) on both its endpoints. Therefore, up to adding a copy of $G_1$ (that is, a pendant edge) to each upcoming vertex, one may assume that the null vector cannot be used anywhere.

The second gadget $G_2$ consists of a path of length two with distinct colors, which prevents its endpoints from being assigned the same vector. This is summarized in the following observation. 

\begin{observation}
    \label{obs:path}
    If $G$ contains two edges $uv$ and $uw$ of different colors, then $v$ and $w$ cannot receive the same vector.
\end{observation}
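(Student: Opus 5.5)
The argument is a direct application of the $t$-inversion formulation of \Cref{obs:main}. Fix any $t$ and any $t$-inversion for $\pi$ (for the application, $t=5$). By definition, each edge receives the dot product of the vectors of its endpoints: $\pi(uv)=\mathbf{u}\cdot\mathbf{v}$ and $\pi(uw)=\mathbf{u}\cdot\mathbf{w}$. We want to show that $\mathbf{v}\neq\mathbf{w}$, and we argue by contradiction.

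Suppose $\mathbf{v}=\mathbf{w}$. Then
\[
\pi(uv)=\mathbf{u}\cdot\mathbf{v}=\mathbf{u}\cdot\mathbf{w}=\pi(uw).
\]
This contradicts the hypothesis that $uv$ and $uw$ have different colors, so $v$ and $w$ cannot receive the same vector in any $t$-inversion.

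No step is a real obstacle. The only point to be explicit about is the meaning of \say{cannot receive}: the claim is stated relative to an arbitrary $t$-inversion of the given $2$-edge-coloring. That is the form needed for the gadget argument, where a $2$-edge-colored planar graph is built so that no $5$-inversion exists.

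Equivalently, one can see it by linearity over $\mathbb{F}_2$: we have $\mathbf{u}\cdot(\mathbf{v}+\mathbf{w})=\pi(uv)+\pi(uw)=1$, which forces $\mathbf{v}+\mathbf{w}\neq \mathbf{0}$.
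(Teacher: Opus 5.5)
Your proof is correct and is the same argument as the paper's: the two differently colored edges at the common vertex $u$ force $\mathbf{u}\cdot\mathbf{v}\neq\mathbf{u}\cdot\mathbf{w}$, hence $\mathbf{v}\neq\mathbf{w}$. The paper's one-line proof writes the letters permuted (it takes $w$ as the common vertex), but the reasoning is identical to yours.
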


\begin{proof}
    We must have $\mathbf{u} \cdot \mathbf{w}\neq \mathbf{v} \cdot \mathbf{w}$, so $\mathbf{v}-\mathbf{u} \neq 0$.
\end{proof}

Therefore, for every edge $uv$ in the graph, adding a copy of $G_2$ (that is a vertex $w$ and edges $uw,vw$ with distinct colors) preserves planarity, and guarantees that $u$ and $v$ cannot get the same vector. We will apply this operation to every edge of the upcoming graphs, which forbids using the same vector on adjacent vertices.

The third gadget, $G_3$, is the graph constructed from the graph depicted in Figure~\ref{gadget5} by gluing the gadget $G_2$ on each edge, and the gadget $G_1$ on each vertex. 
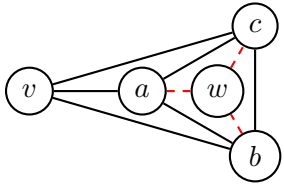
\begin{figure}[H]
    \centering
    \begin{tikzpicture}[thick, every node/.style={draw,circle}]
    \node (v) at (-.5,0) {$v$};
    \node (v1) at (1,0) {$a$};
    \node (v2) at (2,0) {$w$};
    \node (v3) at (2.5,-0.866) {$b$};
    \node (v4) at (2.5,.866) {$c$};
    \draw (v) -- (v1) -- (v3) -- (v) -- (v4) -- (v1);
    \draw (v3) -- (v4);
    \draw[dashed,red] (v3) -- (v2) -- (v4);
    \draw[dashed,red] (v2) -- (v1);
    \end{tikzpicture}
    \caption{A planar graph with a distinguished vertex $v$ used to construct gadget $G_3$}
    \label{gadget5}
\end{figure}

\begin{lemma}
     In any $5$-inversion of $G_3$, the vertex $v$ cannot be inverted at each step, i.e. labeled $(1,1,1,1,1)$. 
\end{lemma}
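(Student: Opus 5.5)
The plan is to argue by contradiction. Fix a $5$-inversion of $G_3$ in which $\mathbf{v}=\mathbf{j}:=(1,1,1,1,1)$. Then I would use only the colors of the edges of the graph in Figure~\ref{gadget5}, together with the two properties guaranteed by the attached gadgets. By the copies of $G_1$, no vertex receives the null vector. By the copies of $G_2$ and Observation~\ref{obs:path}, adjacent vertices receive distinct vectors.

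\textbf{Step 1: $\mathbf{a},\mathbf{b},\mathbf{c}$ have even weight.} Since $va$, $vb$, $vc$ are colored $0$, we get $\mathbf{j}\cdot\mathbf{a}=\mathbf{j}\cdot\mathbf{b}=\mathbf{j}\cdot\mathbf{c}=0$. As $\mathbf{j}\cdot\mathbf{x}$ is the parity of the weight of $\mathbf{x}$, each of $\mathbf{a},\mathbf{b},\mathbf{c}$ has even weight. Over $\mathbb{F}_2$ we have $\mathbf{x}\cdot\mathbf{x}=\mathbf{j}\cdot\mathbf{x}$, so each of these vectors is self-orthogonal.

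\textbf{Step 2: $U$ is totally isotropic.} The edges $ab$, $ac$, $bc$ are colored $0$, so $\mathbf{a},\mathbf{b},\mathbf{c}$ are pairwise orthogonal. Combined with Step 1, the subspace $U=\mathrm{span}(\mathbf{a},\mathbf{b},\mathbf{c})$ is totally isotropic for the standard dot product on $\mathbb{F}_2^5$. This form is nondegenerate, so $U\subseteq U^\perp$ and $\dim U+\dim U^\perp=5$ give $\dim U\leq 2$.

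\textbf{Step 3: $\mathbf{c}=\mathbf{a}+\mathbf{b}$.} The vectors $\mathbf{a},\mathbf{b},\mathbf{c}$ are nonzero (by $G_1$) and pairwise distinct (since $ab$, $ac$, $bc$ are edges, by $G_2$). A space of dimension at most $2$ has at most three nonzero vectors, so $\dim U=2$ and $\mathbf{a},\mathbf{b},\mathbf{c}$ are exactly its three nonzero elements. Hence $\mathbf{c}=\mathbf{a}+\mathbf{b}$.

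\textbf{Step 4: contradiction at $w$.} The edges $wa$ and $wb$ are colored $1$, so
\[
\mathbf{w}\cdot\mathbf{c}=\mathbf{w}\cdot\mathbf{a}+\mathbf{w}\cdot\mathbf{b}=1+1=0.
\]
This contradicts $wc$ being colored $1$, so no such $5$-inversion exists.

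The only step that is not a direct computation is the isotropy bound $\dim U\leq 2$ in Step 2. It holds because the standard form on $\mathbb{F}_2^5$ is nondegenerate. Alternatively, one can verify it by hand: two orthogonal, distinct, even-weight nonzero vectors of length $5$ have essentially one configuration up to permutation, e.g.\ $(1,1,0,0,0)$ and $(0,0,1,1,0)$. One then checks that any third such vector orthogonal to both must be their sum.
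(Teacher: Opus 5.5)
Your proof is correct and follows the paper's argument. Both use even weight, pairwise orthogonality, nonzeroness and distinctness of $\mathbf{a},\mathbf{b},\mathbf{c}$ to get $\mathbf{a}+\mathbf{b}+\mathbf{c}=\mathbf{0}$, then contradict the three $1$-colored edges at $w$; you justify the middle step with the bound $\dim U\leq 2$ for a totally isotropic subspace of $\mathbb{F}_2^5$, where the paper just asserts the configuration $(1,1,1,1,0),(1,1,0,0,0),(0,0,1,1,0)$ up to permutation, so your version makes that step rigorous.
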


\begin{proof}
    If $\mathbf{v}=(1,1,1,1,1)$, then vectors $\mathbf{a},\mathbf{b},\mathbf{c}$ must be pairwise orthogonal (as vertices the $a,b,c$ induce a triangle) and of even weight (due to their adjacency to $v$). Moreover, the glued copies of $G_1$ and $G_2$ force them to be non-null and pairwise distinct. This ensures (up to permutation of coordinates and vertices) that $\mathbf{a}=(1,1,1,1,0)$, $\mathbf{b}=(1,1,0,0,0)$ and $\mathbf{c}=(0,0,1,1,0)$, and in particular $\mathbf{a}+\mathbf{b}+\mathbf{c}=\mathbf{0}$. Therefore we obtain $0=(\mathbf{a}+\mathbf{b}+\mathbf{c})\cdot \mathbf{w}=1+1+1=1$, which is absurd.
\end{proof}

We now introduce a fourth and final gadget $G_4$, constructed from the graph depicted in Figure~\ref{gadget4}, by identifying each of its vertices with the distinguished vertex $v$ of a copy of $G_3$, and by adding a copy of $G_2$ on each edge.

\begin{lemma}
     In any $5$-inversion of $G_4$, the vertex $v$ is inverted an odd number of times.
\end{lemma}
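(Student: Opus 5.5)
Since Figure~\ref{gadget4} fixes the exact edges and colors of the base graph, I will argue directly with its structure. I will use the constraints that every $5$-inversion of $G_4$ inherits from the gadgets glued onto it. By the copies of $G_1$, no vertex gets $\mathbf{0}$. By the copies of $G_2$ on each edge and Observation~\ref{obs:path}, adjacent vertices get distinct vectors. By the previous lemma applied to the copy of $G_3$ hanging at each vertex, no vertex of the base graph gets $\mathbf{1}=(1,1,1,1,1)$.

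The key reformulation is that, over $\mathbb{F}_2$, $\mathbf{x}\cdot\mathbf{x}=\mathbf{x}\cdot\mathbf{1}$ is the parity of the weight of $\mathbf{x}$. So ``$v$ is inverted an odd number of times'' means exactly $\mathbf{v}\cdot\mathbf{v}=1$.

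I will argue by contradiction and assume $\mathbf{v}$ has even weight. Since $\mathbf{v}\neq\mathbf{0}$, up to permuting coordinates either $\mathbf{v}=(1,1,0,0,0)$ or $\mathbf{v}=(1,1,1,1,0)$. Each neighbour $x$ of $v$ in the base graph must then lie in the affine hyperplane $\{\mathbf{x} : \mathbf{x}\cdot\mathbf{v}=\pi(vx)\}$. It must also avoid $\mathbf{0}$, $\mathbf{1}$ and $\mathbf{v}$.

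I then propagate through the base graph as in the proof for $G_3$. Triangles and adjacent pairs give further orthogonality or non-orthogonality relations. The aim is to show that the neighbours of $v$ are forced into a configuration with a linear dependency, typically some subset of them summing to $\mathbf{0}$ or to $\mathbf{v}$. Pairing this dependency with a further vertex $w$ adjacent to all of them, as in the identity $(\mathbf{a}+\mathbf{b}+\mathbf{c})\cdot\mathbf{w}=1+1+1$ used for $G_3$, gives the parity contradiction $0=1$.

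In each of the two cases I will enumerate the admissible vectors in each hyperplane coset. There are at most $16$ before exclusions, and symmetry fixing $\mathbf{v}$ cuts this down considerably. I expect the main obstacle to be exactly this case analysis. It must be organised so that the two choices of $\mathbf{v}$ are handled uniformly, for instance by working in $\mathbf{v}^\perp/\langle \mathbf{v}\rangle$. Because $\mathbf{v}$ is self-orthogonal when its weight is even, this quotient is a $3$-dimensional space with the induced form. Phrased there, the neighbours of $v$ must realise a colored pattern that a $3$-dimensional space cannot accommodate once $\mathbf{0}$, $\mathbf{1}$ and equal adjacent vectors are forbidden. I will first try to find such a dimension-counting obstruction, and fall back to explicit enumeration if it does not close.
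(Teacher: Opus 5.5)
Your reductions are correct: the exclusions inherited from $G_1$, $G_2$, $G_3$, the fact that odd weight means $\mathbf v\cdot\mathbf v=1$, and that for even-weight $\mathbf v$ the dot product descends to $W=\mathbf v^\perp/\langle\mathbf v\rangle$, where it is nondegenerate since $(\mathbf v^\perp)^\perp=\langle\mathbf v\rangle$. But there is a genuine gap. The proof is never carried out: you never use the actual graph of Figure~\ref{gadget4}, and the contradiction is only announced.

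Worse, the obstruction as you phrase it is false. Name the five black neighbours of $v$ as $a_1$ (left corner), $c_1$, $b$ (bottom centre), $c_2$, $a_2$ (right corner). Together with $v$ they form two all-black $K_4$'s, $\{v,a_1,c_1,b\}$ and $\{v,b,c_2,a_2\}$, and $a_1a_2$ is red. Take $\mathbf v=(0,0,0,1,1)$, $\mathbf a_1=(0,0,1,0,0)$, $\mathbf b=(1,1,0,0,0)$, $\mathbf c_1=\mathbf c_2=(1,1,0,1,1)$, $\mathbf a_2=(1,1,1,0,0)$. This satisfies every edge among these six vertices, avoids $\mathbf 0$ and $\mathbf 1$, and gives adjacent vertices distinct vectors, yet $\mathbf v$ has even weight. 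It is excluded by the red common neighbours of $v,b,c_1$ and of $v,b,c_2$, since $\mathbf v+\mathbf b+\mathbf c_1=\mathbf v+\mathbf b+\mathbf c_2=\mathbf 0$.

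The missing point is that $\mathbf x\neq\mathbf y$ does not give $\bar{\mathbf x}\neq\bar{\mathbf y}$ in $W$: you also need $\mathbf x+\mathbf y\neq\mathbf v$. That is exactly what the five red ``apex'' vertices provide, via $(\mathbf v+\mathbf x+\mathbf y)\cdot\mathbf w=1$; each is joined by red edges to one of the triangles $va_1c_1$, $vbc_1$, $vbc_2$, $va_2c_2$, $va_1a_2$. Their role is to make the quotient map injective on the relevant pairs, not to supply a final parity step.

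With that ingredient, your quotient idea closes without enumeration.
\begin{itemize}
\item The images of $a_1,b,c_1,a_2,c_2$ in $W$ are nonzero, and distinct within each of the five triangles above.
\item Also $\bar{\mathbf a}_1\neq\bar{\mathbf b}$ and $\bar{\mathbf b}\neq\bar{\mathbf a}_2$, because $a_2$, resp.\ $a_1$, sees the two vertices with different colours.
\item In a $3$-dimensional nondegenerate space, three distinct nonzero pairwise orthogonal vectors are independent: a relation $z=x+y$ would make $\langle x,y\rangle$ a $2$-dimensional totally isotropic subspace. By nondegeneracy each then has square $1$, so they form an orthonormal basis.
\item Expand $\bar{\mathbf a}_2$ in the orthonormal basis $\bar{\mathbf a}_1,\bar{\mathbf b},\bar{\mathbf c}_1$. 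The conditions $\bar{\mathbf a}_2\cdot\bar{\mathbf a}_1=1$, $\bar{\mathbf a}_2\cdot\bar{\mathbf b}=0$ and $\bar{\mathbf a}_2\neq\bar{\mathbf a}_1$ force $\bar{\mathbf a}_2=\bar{\mathbf a}_1+\bar{\mathbf c}_1$, which is isotropic.
\item But $\bar{\mathbf a}_2$ also belongs to the orthonormal basis $\bar{\mathbf b},\bar{\mathbf c}_2,\bar{\mathbf a}_2$, so $\bar{\mathbf a}_2\cdot\bar{\mathbf a}_2=1$, a contradiction.
\end{itemize}
Completed this way, your route would replace the paper's computer search with a short conceptual argument. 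It also shows that excluding $\mathbf 1$ plays no role in this lemma. As submitted, however, the decisive step is missing.
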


\begin{figure}[H]
    \centering
    
    \begin{tikzpicture}[thick, every node/.style={draw,circle}]
    \node (0) at (0,0) {$v$};
    \node (1) at (-135:3) {};
    \node (3) at (-90:2.5) {};
    \node (2) at (-45:3) {};
    \node (4) at (-112.5:2.25) {};
    \node (7) at (-67.5:2.25) {};
    \node (5) at (-123.75:1.5) {};
    \node (6) at (-100.75:1.5) {};
    \node (9) at (-79.25:1.5) {};
    \node (8) at (-55.75:1.5) {};
    \node (10) at (-90:3.5) {};
    \draw (0) -- (1) -- (4) -- (0) -- (3) -- (7) --(2) -- (0) -- (7);
    \draw (4) -- (3) -- (7);
    \draw (2) --(3) --(1);
    \draw[red, dashed] (0) --(5) --(1) -- (10) -- (2) -- (8) -- (0) -- (9) -- (3) -- (6) -- (0);
    \draw[red,dashed] (0) to (2.5,0) to (2.5,-3.5) to (10);
    \draw[red,dashed,bend left] (2) to (1);
    \draw[red,dashed] (8) --(7) -- (9);
    \draw[red,dashed] (6) --(4) -- (5);
    \end{tikzpicture}
    \caption{A planar graph with a distinguished vertex $v$ used to construct the gadget $G_4$}
    \label{gadget4}
\end{figure}
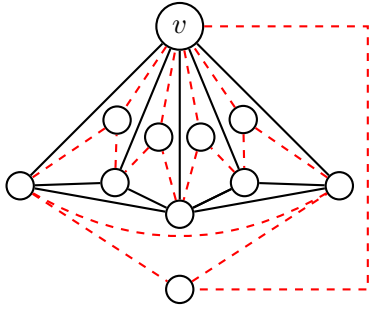

This lemma can be proven by case analysis, using the Python code available at \url{https://github.com/quentinchuet/Inversion-diameter}. In particular, we may now assume that only vectors with 1 or 3 coordinates equal to 1 are allowed in the following graphs. We may now construct a final graph $G$ from the graph depicted in Figure~\ref{gadget13} by identifying each of its vertices with the distinguished vertex of a copy of $G_4$, and gluing $G_2$ on each edge. The following lemma concludes the proof, by showing that $G$ cannot be $5$-inverted using only these vectors. 

\begin{lemma}
    There is no $5$-inversion of $G$.
\end{lemma}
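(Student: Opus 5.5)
We have not seen Figure~\ref{gadget13}, so the plan is phrased around what the construction guarantees rather than its exact picture. The argument is by contradiction: assume $G$ admits a $5$-inversion. By the gadgets already attached, every vertex of the underlying graph $H$ of Figure~\ref{gadget13} is the distinguished vertex of a copy of $G_4$. Hence every vertex of $H$ receives a vector of odd weight, i.e.\ weight $1$ or $3$. Such vectors are non-null automatically, and there are only $5+10=15$ of them. Moreover, since $G_2$ is glued on each edge, adjacent vertices of $H$ receive distinct vectors. So the $5$-inversion restricts to a map $\mathbf{x}:V(H)\to W$, where $W\subseteq\mathbb{F}_2^5$ is the set of odd-weight vectors. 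This map must be a proper coloring of $H$ with colors in $W$, and it must satisfy $\mathbf{u}\cdot\mathbf{v}=\pi(uv)$ on every edge of $H$.

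The key step is to encode these constraints as a compatibility graph on $W$. Let $\Gamma_c$, for $c\in\{0,1\}$, be the graph on $W$ in which $\mathbf{x},\mathbf{y}$ are adjacent when $\mathbf{x}\neq\mathbf{y}$ and $\mathbf{x}\cdot\mathbf{y}=c$. A $5$-inversion of $G$ then yields a homomorphism from the $2$-edge-colored graph $H$ to the $2$-edge-colored complete graph on $W$. It is useful to note how the dot product behaves on $W$:
\begin{itemize}
\item two weight-$1$ vectors have product $0$;
\item a weight-$1$ vector $e_i$ and a weight-$3$ vector $\mathbf{y}$ have product $1$ exactly when $i\in\operatorname{supp}(\mathbf{y})$;
\item two weight-$3$ vectors have product equal to $|\operatorname{supp}\cap\operatorname{supp}'| \bmod 2$, and their supports meet in $1$ or $2$ elements.
\end{itemize}
We then exploit the symmetry group $S_5$, which acts on coordinates and preserves $W$ and the dot product. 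This lets us fix the vectors on a first triangle or edge of $H$ up to symmetry, and propagate forced choices through $H$. Triangles of $H$ are the main lever. A triangle with all edges colored $0$ needs three pairwise-orthogonal distinct vectors of $W$; such triples are very rigid, for example three distinct $e_i$'s, or $e_i$ together with weight-$3$ vectors avoiding $i$ that meet in two elements. A triangle with edges colored $1$ is rigid in a similar way.

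I expect the graph of Figure~\ref{gadget13} to be designed so that this propagation closes up into a contradiction, such as a parity obstruction similar to the one in the $G_3$ lemma. There, summing vectors that are forced to add to zero, and taking the dot product with a neighbor, gives $0=1$. The main obstacle is that the case analysis may be sizeable even after quotienting by $S_5$. My first attempt would be to find a vertex or face of $H$ whose constraints already pin the configuration down to one or two orbits, and then push the contradiction through by hand. If this branches too much, I would fall back, as the paper does for $G_4$, on an exhaustive computer search: a backtracking search for homomorphisms from $H$ into the $15$-vertex $2$-edge-colored graph on $W$, which is immediate to run and certify.
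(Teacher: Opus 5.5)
\textbf{There is a genuine gap: your proposal does not prove the lemma.} Your reduction is correct and matches the paper's. The copies of $G_4$ force odd weight on every vertex of the base graph. The copies of $G_2$ force adjacent vertices to receive distinct vectors. What remains is a homomorphism question into the odd-weight vectors $W\subseteq\mathbb{F}_2^5$. But you never use the actual graph of Figure~\ref{gadget13}, and you never reach a contradiction. The argument is deferred to a hoped-for propagation or to a computer search that is not carried out. The one rigidity fact you actually need is asserted only as \say{rigid in a similar way}. This is a strategy, not a proof.

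\textbf{The graph.} It is $K_4$: a triangle $abc$ whose edges are all colored $1$, and a centre $v$ joined to $a,b,c$ by edges colored $0$.

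\textbf{The missing step is the classification of $1$-colored triangles in $W$.}
\begin{itemize}
\item Two distinct weight-$1$ vectors are orthogonal, so at most one of $\mathbf a,\mathbf b,\mathbf c$ has weight $1$.
\item Three distinct weight-$3$ supports cannot pairwise meet in exactly one element. If $S_1\cap S_2=\{x\}$, then $S_1\cup S_2=[5]$. Hence $|S_3\cap S_1|+|S_3\cap S_2|\geq |S_3|=3$, so both intersections cannot equal $1$.
\end{itemize}
So, up to the action of $S_5$ and relabelling, $\mathbf a=e_1$, $\mathbf b=e_1+e_2+e_3$ and $\mathbf c=e_1+e_4+e_5$. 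This gives $\mathbf a+\mathbf b+\mathbf c=(1,1,1,1,1)$.

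\textbf{The contradiction.} Since $\mathbf v$ has odd weight, $(\mathbf a+\mathbf b+\mathbf c)\cdot\mathbf v=1$. The three $0$-colored edges at $v$ give $(\mathbf a+\mathbf b+\mathbf c)\cdot\mathbf v=0+0+0=0$, which is absurd.

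Distinctness on the triangle is essential here, because $\mathbf x\cdot\mathbf x=1$ for every odd-weight $\mathbf x$. Without it, $\mathbf a=\mathbf b=\mathbf c$ would satisfy the triangle. Your framework does record distinctness. Still, you must run this short argument, or exhibit the completed exhaustive search, for the lemma to be proved.
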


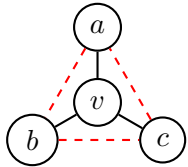
\begin{figure}[H]
    \centering
    \begin{tikzpicture}[thick, every node/.style={draw,circle}]
    \node (v) at (0,0) {$v$};
    \node (a) at (90:1) {$a$};
    \node (b) at (210:1) {$b$};
    \node (c) at (330:1) {$c$};
    \draw (a) -- (v) -- (b);
    \draw (v) -- (c);
    \draw[red,dashed] (a) -- (b) -- (c) -- (a);
    \end{tikzpicture}
    \caption{A planar graph that does not admit a solution with only vectors of weight $1$ or $3$}
    \label{gadget13}
\end{figure}

\begin{proof}
    Due to the glued copies of $G_4$, the vertices $a,b,c$ and $v$ must be inverted exactly $1$ or $3$ times. Therefore, up to permutation of coordinates and vertices, we must have $\mathbf{a}=(1,0,0,0,0)$, $\mathbf{b}=(1,1,1,0,0)$ and $\mathbf{c}=(1,0,0,1,1)$, hence $\mathbf{a} + \mathbf{b} + \mathbf{c} = (1,1,1,1,1)$. But since $\mathbf{v}$ must have an odd weight, we conclude that $1 = (\mathbf{a} + \mathbf{b} + \mathbf{c}) \cdot \mathbf{v} = 0 + 0 + 0 = 0$, which is absurd.
\end{proof}

\subsection{Lower bound for girth 4}\label{ssec:lb-planar-girth-4}

We now proceed with studying planar graphs of girth 4, and start with providing a graph of inversion diameter at least 4, as summarized in the following proposition.

\begin{proposition}
The edge-colored graph from \Cref{girth4} admits no 3-inversion. 
\end{proposition}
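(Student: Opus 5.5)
We do not see \Cref{girth4} itself, so the plan follows the strategy of \Cref{ssec:lb-planar-6}, adapted to dimension $3$ and to the absence of triangles. By \Cref{obs:main}, it suffices to show that no assignment of vectors of $\mathbb{F}_2^3$ to the vertices satisfies $\pi(uv)=\mathbf{u}\cdot\mathbf{v}$ on every edge. We assume, for contradiction, that such a $3$-inversion exists.

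\textbf{Step 1: exclude the null vector.} The girth-$3$ construction used the gadget $G_2$, which is a triangle and is not allowed here. Instead we look for pendant edges colored $1$ (the gadget $G_1$): every vertex carrying one gets a non-null vector. Note that $\mathbb{F}_2^3$ has only $7$ non-null vectors. They split into the odd-weight ones (self-products $1$) and the even-weight ones (self-products $0$). We record the orthogonality relations among them, for instance that $(1,1,1)$ is orthogonal exactly to the even-weight vectors.

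\textbf{Step 2: restrict the vectors of the core vertices.} I expect the figure to contain a $4$-cycle, or two vertices $v,w$ with several common neighbours, whose incident edges are colored so as to force a linear dependency. The mechanism is the same as in the gadgets $G_3$ and G$_4$: suppose $x_1,\dots,x_m$ are neighbours of a vertex $y$ and their vectors are forced to sum to $\mathbf{0}$. Then
\[
\sum_i \pi(x_iy)=\Big(\sum_i \mathbf{x}_i\Big)\cdot\mathbf{y}=0 .
\]
So if the colors on the edges $x_iy$ have odd sum, we get a contradiction. The plan is therefore:
\begin{itemize}
\item first identify small subgraphs whose colorings pin down, up to $GL$-type symmetries (or permutation of coordinates), the possible vectors of a distinguished vertex;
\item then glue these subgraphs so that the surviving possibilities force three vectors $\mathbf{a},\mathbf{b},\mathbf{c}$ with $\mathbf{a}+\mathbf{b}+\mathbf{c}=\mathbf{0}$ (or some other dependency), all adjacent to a common vertex through an odd number of edges colored $1$.
\end{itemize}
In dimension $3$, any $4$ non-null vectors are dependent, which makes such dependencies easy to force.

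\textbf{Step 3: finish with a case analysis.} Since $\mathbb{F}_2^3$ is small, the remaining argument is a finite case analysis. We fix the vector of one hub vertex up to coordinate permutation, which leaves only the orbits of weight $1$, $2$ and $3$. Each case then propagates through the orthogonality constraints along the edges until some edge's constraint fails. If the graph is large, this check can be delegated to the same exhaustive search code referenced for $G_4$.

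\textbf{Main obstacle.} The hard step is organizing the case analysis so that it stays short. Without triangles we lose the ``adjacent vertices get distinct vectors'' gadget, so distinctness must instead be derived from $4$-cycles with an odd number of edges colored $1$. My first attempt would be to show that such a $4$-cycle $uvwx$ forbids $\mathbf{u}=\mathbf{w}$, and to build everything from this.
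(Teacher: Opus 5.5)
Your proposal is a strategy, not a proof. It never engages with the specific graph, so Steps 2--3 are never carried out, and the mechanism you plan around (forcing a dependency such as $\mathbf{a}+\mathbf{b}+\mathbf{c}=\mathbf{0}$ and then a parity clash) is not what this graph is built for. The missing idea is a pure counting argument.

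The graph has two hubs $a,b$ of degree $8$ with eight common neighbours. Their colour profiles $(\pi(xa),\pi(xb))$ take each of the four values of $\mathbb{F}_2^2$ exactly twice. The two vertices sharing a profile are joined through a private degree-$2$ vertex by one edge coloured $0$ and one edge coloured $1$. Hence any two of the eight vertices have a common neighbour (a hub or that degree-$2$ vertex) reached by edges of different colours, and by \Cref{obs:path} they receive pairwise distinct vectors. Each of the eight is also incident to an edge coloured $1$ (one of them only through the pendant edge), so none gets the null vector. Eight pairwise distinct non-null vectors cannot live in $\mathbb{F}_2^3$, which has only $7$. You recorded that fact in Step 1 but never combined it with distinctness. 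This is the paper's proof: no forced dependencies, no case analysis, no computer search.

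Your stated ``main obstacle'' is also misdiagnosed. \Cref{obs:path} does not need triangles. Two vertices $v,w$ with a common neighbour $u$ such that $\pi(uv)\neq\pi(uw)$ get distinct vectors whether or not $vw$ is an edge. A triangle only appears when the gadget $G_2$ is used to separate \emph{adjacent} vertices, which is not needed here: the eight vertices form an independent set. Your $4$-cycle remark (an odd number of $1$-edges on $uvwx$ forbids $\mathbf{u}=\mathbf{w}$) is correct, but it is only a special case of \Cref{obs:path}. The tool you identified as the hard step is in fact the whole proof, once paired with non-nullity and pigeonhole.
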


\begin{figure}[ht]
    \centering
    \begin{tikzpicture}[thick, every node/.style={draw,circle},xscale=.75]
    \node (0) at (0,0) {};
    \node (1) at (1,0) {};
    \node (2) at (2,0) {};
    \node (3) at (4,0) {};
    \node (4) at (5,0) {};
    \node (5) at (6,0) {};
    \node (6) at (8,0) {};
    \node (7) at (9,0) {};
    \node (8) at (10,0) {};
    \node (9) at (12,0) {};
    \node (10) at (13,0) {};
    \node (11) at (14,0) {};
    \node (-1) at (0,1) {};
    \node (a) at (5.5,2) {};
    \node (b) at (5.5,-2) {};
    \draw (0) -- (a) -- (2) -- (b) -- (0) -- (1);
    \draw (4) -- (3) -- (a) -- (5);
    \draw (7) -- (6) -- (b) -- (8);
    \draw (10) -- (9);
    \draw[red,dashed] (-1) -- (0);
    \draw[red,dashed] (1) -- (2);
    \draw[red,dashed] (4) -- (5) -- (b) -- (3);
    \draw[red,dashed] (7) -- (8) -- (a) -- (6);
    \draw[red,dashed] (10) -- (11) -- (a) -- (9) -- (b) -- (11);
    \end{tikzpicture}
    \caption{A girth 4 planar graph with inversion diameter 4, edges colored 1 are dotted.}
    \label{girth4}
\end{figure}
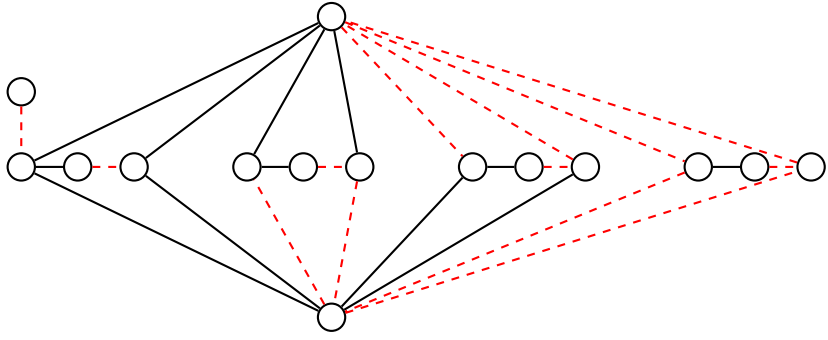

\begin{proof}
Observe that any two vertices of degree $3$ are adjacent with different colors to a third vertex, hence they must receive pairwise distinct vectors by \Cref{obs:path}. Moreover, they are all incident with an edge colored $1$, ensuring that they cannot be labeled with the null vector. Therefore, any vector assignment must use at least 9 vectors, which in turn ensures that their dimension is at least 4.
\end{proof}

\subsection{Upper bound for girth 4}

In this section, we show that every planar graph of girth 4 has inversion diameter at most 10. We use the discharging method to  prove a stronger statement, where each vertex is inverted exactly thrice, summarized in Theorem~\ref{thm:girth4-upper}.

Given an edge-colored graph $(G,\pi)$, and $0 \leq k \leq d$, a $\binom{d}{k}$-vector is a vector in $\mathbb{F}_2^d$ with exactly $k$ non-zero entries, and a $\binom{d}{k}$-inversion is a $d$-inversion where the vector associated with each vertex is a $\binom{d}{k}$-vector.
\begin{theorem}\label{thm:girth4-upper}
    Let $(G,\pi)$ be a triangle-free 2-edge-colored planar graph. Then $(G,\pi)$ admits a $\binom{10}{3}$-inversion.
\end{theorem}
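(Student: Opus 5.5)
We argue by a minimal counterexample combined with discharging. Let $(G,\pi)$ be a triangle-free planar $2$-edge-colored graph with no $\binom{10}{3}$-inversion, chosen with $|V(G)|+|E(G)|$ minimum. We first derive reducible configurations, then show via Euler's formula that a triangle-free planar graph must contain one of them.

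\textbf{The extension count.} Reducibility rests on counting how many $\binom{10}{3}$-vectors are compatible with a few prescribed neighbours. Fix one $\binom{10}{3}$-vector $\mathbf{u}$ and a bit $c$. The $\binom{10}{3}$-vectors $\mathbf{x}$ with $\mathbf{x}\cdot\mathbf{u}=c$ are counted by the parity of $|\mathrm{supp}(\mathbf{x})\cap\mathrm{supp}(\mathbf{u})|$.
\begin{itemize}
\item For $c=0$ (intersection size $0$ or $2$): $\binom{7}{3}+\binom{3}{2}\cdot 7=35+21=56$.
\item For $c=1$ (intersection size $1$ or $3$): $3\binom{7}{2}+1=64$.
\end{itemize}
So each neighbour forbids at most $120-56=64$ of the $120$ vectors. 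Hence a vertex of degree $1$ always extends. For higher degrees we need a sharper count: each pair of constraints from two neighbours $\mathbf{u}_1,\mathbf{u}_2$ leaves a number of solutions depending only on $|\mathrm{supp}(\mathbf{u}_1)\cap\mathrm{supp}(\mathbf{u}_2)|$ and the two prescribed bits. I would tabulate, by a direct count or inclusion–exclusion, the minimum number of $\binom{10}{3}$-vectors satisfying $d$ prescribed dot products for $d=2,3$. The hope is that this minimum stays positive for $d\le 3$, possibly for $d\le 4$. Since the number of affine constraints is at most $d$, about $120/2^d$ solutions are expected, and for $d\le 3$ that is around $15$. The worst case is when the neighbour vectors are dependent, for example $\mathbf{u}_3=\mathbf{u}_1+\mathbf{u}_2$, which is impossible for weight $3$ vectors anyway, or when they are equal with inconsistent bits. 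The latter cannot happen if the neighbours received equal vectors only when consistent; if it can, we handle it by choosing the deleted configuration cleverly.

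\textbf{Reducible configurations.} The base one is a $3^-$-vertex: delete it, invert the rest by minimality, and extend. We then add configurations involving $4$-vertices, or adjacent low-degree vertices on $4$-faces, reduced by deleting several vertices and extending greedily in a suitable order, with flexibility coming from the $\ge 15$ choices. With only $3^-$-vertices reducible, minimum degree $4$ already contradicts triangle-free planarity, since such graphs have $|E|\le 2|V|-4$ and so average degree below $4$. That would make $3^-$-vertex reducibility enough.

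\textbf{Main obstacle.} The crux is showing rigorously that every choice of three $\binom{10}{3}$-vectors and three bits admits a common $\binom{10}{3}$-solution. I would organise this by the Venn-diagram intersection pattern of the three supports on the $10$ coordinates. There are finitely many patterns, including coincident vectors, where consistency is forced by the coloring being realised. For each pattern I would count the weight-$3$ vectors by their intersection profile across the Venn regions, and check that every parity requirement is met. Coincident neighbour vectors with conflicting bits is the delicate case. If a counterexample pattern appears, I would strengthen the induction to require distinct vectors at distance two, or fall back to a genuine discharging argument with $4$-faces and $4$-vertices, using charges $\deg(v)-4$ and $\deg(f)-4$.
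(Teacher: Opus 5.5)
\textbf{Gap: deleting a $3^-$-vertex and extending does not work.} Your whole argument rests on this reduction: delete a $3^-$-vertex $v$, invert $G-v$ by minimality, and extend to $v$. It fails in general, not only in a corner case. Suppose $v$ has neighbours $u_1,u_2$ with $\pi(vu_1)\neq\pi(vu_2)$. Nothing in $(G-v,\pi|_{E(G-v)})$ prevents the inversion from giving $\mathbf{u_1}=\mathbf{u_2}$, since the only bicoloured path encoding that constraint went through $v$. Then no vector at all satisfies $\mathbf{v}\cdot\mathbf{u_1}\neq\mathbf{v}\cdot\mathbf{u_2}$.

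So your remark that consistency is ``forced by the coloring being realised'' is wrong. The problem affects $2$-vertices just as much as $3$-vertices, so you cannot reach minimum degree $4$ and invoke $|E|\le 2|V|-4$.

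Your suggested repair, requiring distinct vectors at distance two, is impossible. The $121$ leaves of $K_{1,121}$ are pairwise at distance two, but only $120$ $\binom{10}{3}$-vectors exist. Your counts ($56$ and $64$) are fine, and the multi-constraint count you want is essentially the paper's computer-checked lemma: at least $7$ solutions for up to three \emph{pairwise distinct} vectors. But that count is not the crux; forcing distinctness is.

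\textbf{How the paper handles this.} It needs two ideas that are missing from your proposal.

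\begin{enumerate}
\item \emph{$3$-vertices via a gadget.} For a $3$-vertex $v$ with neighbours $u_1,u_2,u_3$, delete $v$. For each pair $u_i,u_j$, add a new common neighbour joined to them by edges of different colours. This keeps the graph planar and triangle-free and forces $u_1,u_2,u_3$ to receive pairwise distinct vectors, after which $v$ extends. The new graph is larger, so the minimality order must put the number of $3^+$-vertices first. Your $|V|+|E|$ order does not allow this step.

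\item \emph{$2$-vertices survive, so a real discharging is needed.} For a $2$-vertex the gadget is just $v$ itself, so $2$-vertices cannot be eliminated this way. The paper therefore proves genuine configurations:
\begin{itemize}
\item no cut vertex;
\item no $2$-vertex adjacent to a $4^-$-vertex (re-choose the low-degree neighbour, using the $\geq 7$ available choices to avoid a collision);
\item no $4$-cycle containing two $2$-vertices;
\item a $(4+t)$-vertex, $t\in\{1,2,3\}$, has at most $t$ neighbours of degree $2$.
\end{itemize}
It then runs a discharging with charges $\deg-4$ on vertices and faces, in which $5^+$-vertices and $5^+$-faces feed the $2$-vertices according to whether these are isolated, fringe or protected neighbours.
\end{enumerate}

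Your fallback mentions such a discharging but supplies none of these configurations or rules. As it stands, the proposal does not prove the theorem.
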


Let us start with a few technical results whose proofs rely heavily on the following lemma, proven by case analysis using the Python code available at \url{https://github.com/quentinchuet/Inversion-diameter}.

\begin{lemma}\label{lem:deg3choice}
    Let $k\in [3]$, $\mathbf{u_1},\ldots,\mathbf{u_k}$ a family of pairwise distinct $\binom{10}{3}$-vectors and $a_1,\ldots,a_k \in \mathbb{F}_2$. Then there exist at least 7 distinct $\binom{10}{3}$-vectors $\mathbf{v_1}, \dots, \mathbf{v_7}$ such that $\mathbf{u_i \cdot v_j} = a_i$ for all $i\leq k$ and $j \in \{1,\dots,7\}$.
\end{lemma}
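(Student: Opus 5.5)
The plan is to replace the computer search by a hand classification, using the symmetry of the problem and a counting formula. Identify each $\binom{10}{3}$-vector with its support, a $3$-subset of $[10]$. Then $\mathbf{u}\cdot\mathbf{v}=|U\cap V| \bmod 2$, so the statement depends only on the configuration $(U_1,\dots,U_k)$ up to the action of $S_{10}$ on coordinates.

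\textbf{Reduction to $k=3$.} If $k<3$, pick distinct $3$-sets $U_{k+1},\dots,U_3$ different from the given ones, which is always possible. The solution set for the original constraints is the disjoint union, over the $2^{3-k}$ choices of $a_{k+1},\dots,a_3$, of the solution sets of the extended systems. So it suffices to prove that every system with $k=3$ has at least $7$ solutions.

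\textbf{Counting for $k=3$.} Let $W=U_1\cup U_2\cup U_3$ and $R=[10]\setminus W$, so $|R|\ge 1$. Partition $[10]$ into the (at most $7$) nonempty Venn atoms $A_S=\bigcap_{i\in S}U_i\setminus\bigcup_{i\notin S}U_i$ for $\emptyset\neq S\subseteq[3]$, together with $R$. A set $V$ is determined up to the choice inside atoms by the vector $(m_S)_S$ of how many elements it takes from each atom, with $m_R$ the number taken from $R$. Then
\[
|U_i\cap V|=\sum_{S\ni i} m_S ,
\]
and the number of solutions is
\[
\sum \prod_{S}\binom{|A_S|}{m_S}\binom{|R|}{m_R},
\]
where the sum ranges over all $(m_S,m_R)$ with total $3$ and $\sum_{S\ni i}m_S\equiv a_i \pmod 2$ for each $i$. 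The orbit types of triples of distinct $3$-sets are finite and few; they are determined by the atom sizes, and there are roughly a dozen of them. For each type and each of the $8$ parity vectors $(a_1,a_2,a_3)$, I exhibit enough admissible $(m_S,m_R)$ patterns to reach $7$.

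For example, in the pairwise disjoint case ($|R|=1$):
\begin{itemize}
\item $a=(1,1,1)$: take one element from each $U_i$, giving $27$ solutions.
\item $a=(0,0,0)$: take two elements of some $U_i$ plus the element of $R$, giving $9$ solutions.
\item $a=(1,0,0)$: take one element of $U_1$ plus two elements of $U_2$ or of $U_3$, giving $18$ solutions.
\end{itemize}
When $W$ is smaller, $R$ is larger. Then the patterns using $R$, such as all three elements in $R$ or one element in $R$ combined with a pair inside some $U_i$, contribute many solutions and the count is easy.

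\textbf{Main obstacle.} The expected difficulty lies in heavily overlapping configurations, where the three sets pairwise share two elements; for instance $U_1=\{1,2,3\}$, $U_2=\{1,2,4\}$, $U_3=\{1,3,4\}$ or a sunflower $\{1,2,x\}$. Here the atoms are tiny, and some parity vectors (e.g.\ $a=(1,0,0)$ with mixed requirements on the shared elements) cut the count down sharply. Since the bound $7$ is presumably tight, one of these cases should give exactly $7$. To handle them, I first fix $V\cap W$, which has only a few possibilities when $|W|\le 5$. For each feasible intersection, I then count completions in $R$, which has size at least $5$ and contributes factors like $\binom{|R|}{j}$. This should make each residual case a two-line check, with the tight case identified explicitly.
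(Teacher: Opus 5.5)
Your proposal is correct, but it takes a genuinely different route from the paper. The paper gives no argument: it certifies the lemma by exhaustive computer search. You instead pad to $k=3$, quotient by the $S_{10}$-action down to the twelve orbit types of triples of distinct $3$-sets, and count solutions via Venn atoms. The padding, the symmetry reduction and your counting formula are all sound. The inequality you would tabulate is true, with minimum exactly $7$. So completing your table, of which you have only done the disjoint case, proves the lemma by hand, and unlike the script it shows where the constant $7$ comes from.

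One correction to your heuristics: small $W$ is where the bound is \emph{tight}, not where the count is easy, so your sentence about small $W$ contradicts your next paragraph. Up to symmetry, equality holds only in two cases:
\begin{itemize}
\item $\{1,2,3\},\{1,2,4\},\{1,3,4\}$ with $a=(1,0,0)$; the solutions are $\{1,4,r\}$ for $r\in R$, and $\{1,2,3\}$.
\item $\{1,2,3\},\{1,2,4\},\{3,4,5\}$ with $a=(1,0,0)$; the solutions are $\{3,5,r\}$ for $r\in R$, and $\{1,4,5\},\{2,4,5\}$.
\end{itemize}
In both, no element of $W$ has incidence vector $a$. Hence $|V\cap W|\ge 2$, and the large $\binom{|R|}{2}$ and $\binom{|R|}{3}$ terms vanish.

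You can also collapse the whole table by expanding the parity indicators as characters. Write $N(a)$ for the number of solutions, $\sigma_i=(-1)^{a_i}$, and $D=U_1\triangle U_2\triangle U_3$, which has odd size. Then
\[
N(a)=15-\sum_i\sigma_i+\sum_{i<j}(-1)^{|U_i\cap U_j|}\sigma_i\sigma_j+\kappa\,\sigma_1\sigma_2\sigma_3,\qquad \kappa=\frac18\sum_{|V|=3}(-1)^{|V\cap D|},
\]
where $\kappa=6,-1,0,1,-6$ for $|D|=1,3,5,7,9$ respectively. Hence $N(a)\ge 15-3-3-1=8$ unless $|D|\in\{1,9\}$. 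The case $|D|=9$ occurs only for pairwise disjoint sets, with minimum $9$. The case $|D|=1$ occurs only for the two configurations above, with minimum $7$.
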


\begin{corollary}
\label{cor:deg3choice}
    If $(G,\pi)$ admits a $\binom{10}{3}$-inversion $\phi$, and $v$ is a vertex of degree at most $3$, then there exist at least 7 distinct $\binom{10}{3}$-vectors $\mathbf{v_1}, \dots, \mathbf{v_7}$ such that $\phi(v) \gets \mathbf{v_i}$ remains a $\binom{10}{3}$-inversion.
\end{corollary}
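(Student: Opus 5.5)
The corollary follows directly from \Cref{lem:deg3choice} applied to the neighbours of $v$. Let $\phi$ be a $\binom{10}{3}$-inversion of $(G,\pi)$ and let $u_1,\ldots,u_k$ be the neighbours of $v$, where $k=\deg(v)\le 3$. Changing $\phi(v)$ only affects the constraints on the edges $vu_i$, since the inversion condition $\pi(xy)=\phi(x)\cdot\phi(y)$ is checked edge by edge. So it suffices to find at least $7$ distinct $\binom{10}{3}$-vectors $\mathbf{w}$ with $\phi(u_i)\cdot\mathbf{w}=\pi(vu_i)$ for every $i\le k$. We therefore want to apply \Cref{lem:deg3choice} with $\mathbf{u_i}=\phi(u_i)$ and $a_i=\pi(vu_i)$.

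The degenerate case $k=0$ is immediate: every $\binom{10}{3}$-vector works, and there are $120\ge 7$ of them. If one prefers to stay within the lemma as stated, take $k=1$ with any vector and the appropriate $a_1$; the conclusion is even stronger without constraints.

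The one obstacle is that \Cref{lem:deg3choice} requires the $\mathbf{u_i}$ to be pairwise distinct, and $\phi$ need not be injective on $N(v)$. We handle this by collapsing repeated vectors. Suppose $\phi(u_i)=\phi(u_j)$. Since $\phi$ is a valid inversion, any valid choice for $\phi(v)$ must satisfy both $\phi(u_i)\cdot\phi(v)=\pi(vu_i)$ and $\phi(u_j)\cdot\phi(v)=\pi(vu_j)$. The current value $\phi(v)$ itself witnesses that $\pi(vu_i)=\pi(vu_j)$, so the two constraints coincide.

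We may therefore pass to the set of distinct vectors among $\phi(u_1),\ldots,\phi(u_k)$, with well-defined targets $a$. This is a family of $k'\le k\le 3$ pairwise distinct vectors, and $k'\ge 1$ whenever $k\ge 1$. Applying \Cref{lem:deg3choice} to this family yields $7$ distinct $\binom{10}{3}$-vectors $\mathbf{v_1},\ldots,\mathbf{v_7}$ satisfying all the edge constraints at $v$. Hence reassigning $\phi(v)\gets\mathbf{v_i}$ keeps $\phi$ a $\binom{10}{3}$-inversion for each $i$.
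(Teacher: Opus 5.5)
Your proof is correct and is the paper's argument: apply the lemma to the distinct vectors among $\phi(u_1),\ldots,\phi(u_k)$ with $a_i=\pi(vu_i)$, noting that $\phi(v)$ itself forces $\pi(vu_i)=\pi(vu_j)$ whenever $\phi(u_i)=\phi(u_j)$. Your explicit handling of $k=0$, which lies outside the lemma's range $k\in[3]$, is a small detail the paper leaves implicit.
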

\begin{proof}
    Let $u_1,\ldots,u_k$ ($k\leq 3$) be the neighbors of $v$.
    We apply \Cref{lem:deg3choice} to the set $\{\phi(u_1),\ldots,\phi(u_k)\}$, with the values $a_i=\pi(vu_i)$ to get seven $\binom{10}{3}$-vectors.
    
    Note that if $\phi(u_i)=\phi(u_j)$ then $\pi(vu_i)=\pi(vu_j)$. Therefore, assigning any of the obtained vectors to $v$ yields a $\binom{10}{3}$-inversion.
\end{proof}

We will now prove Theorem~\ref{thm:girth4-upper}.
Suppose the theorem does not hold, and throughout the remainder of the subsection, let $G$ be a counterexample which first minimizes the number of $3^+$-vertices, then minimizes the total number of vertices.
Let us fix a planar embedding of $G$.
We begin by showing several claims about the structure of $G$, which will produce a contradiction. 

First we show that $G$ has no cut-vertex; as a consequence, $G$ has no $1$-vertex, and no boundary of a face $G$ contains the same vertex twice.
\begin{claim}\label{claim:cut}
    $G$ has no cut-vertex.
\end{claim}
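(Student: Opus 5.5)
Suppose for contradiction that $G$ has a cut-vertex $x$. Then $V(G)$ splits as $V_1\cup V_2$ with $V_1\cap V_2=\{x\}$, no edges between $V_1\setminus\{x\}$ and $V_2\setminus\{x\}$, and both $G_1=G[V_1]$ and $G_2=G[V_2]$ having at least two vertices. Each $G_i$ with the restricted coloring $\pi|_{E(G_i)}$ is a triangle-free planar 2-edge-colored graph with strictly fewer vertices than $G$. It also has at most as many $3^+$-vertices as $G$, since degrees can only drop when passing to an induced subgraph. So $G_i$ is smaller in the lexicographic order used to choose $G$, and by minimality it admits a $\binom{10}{3}$-inversion $\phi_i$.

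The plan is to glue $\phi_1$ and $\phi_2$ along $x$. The only requirement is that both inversions assign the same vector to $x$. For this I will use the symmetry of $\binom{10}{3}$-inversions under coordinate permutations. If $\sigma$ is a permutation of the 10 coordinates, then $\mathbf{u}\cdot\mathbf{v}=\sigma(\mathbf{u})\cdot\sigma(\mathbf{v})$, and $\sigma$ maps weight-$3$ vectors to weight-$3$ vectors. Hence $\sigma\circ\phi_2$ is again a $\binom{10}{3}$-inversion of $G_2$. Since $\phi_1(x)$ and $\phi_2(x)$ both have weight exactly $3$, some permutation $\sigma$ satisfies $\sigma(\phi_2(x))=\phi_1(x)$. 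This is exactly why the stronger statement, with every vertex receiving exactly three nonzero entries, is convenient: all possible labels of $x$ are equivalent under this symmetry.

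Now define $\phi$ on $V(G)$ by $\phi=\phi_1$ on $V_1$ and $\phi=\sigma\circ\phi_2$ on $V_2$. This is well defined because the two agree at $x$. Every edge of $G$ lies entirely in $G_1$ or entirely in $G_2$, so each edge constraint $\pi(uv)=\mathbf{u}\cdot\mathbf{v}$ is satisfied. Thus $\phi$ is a $\binom{10}{3}$-inversion of $(G,\pi)$, which contradicts the choice of $G$ as a counterexample.

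The stated consequences then follow. If $v$ were a $1$-vertex, its unique neighbor would be a cut-vertex unless $G=K_2$, and $K_2$ trivially admits such an inversion. For $K_2$, take $\mathbf{u}=(1,1,1,0,\dots,0)$ and choose $\mathbf{v}$ to share one coordinate with $\mathbf{u}$ if the edge is colored $1$, and none if it is colored $0$. If $G$ is disconnected, the components can be handled separately, which is the same argument without the gluing step. A face boundary that repeats a vertex forces that vertex to be a cut-vertex, by the standard planar-embedding fact.

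The main thing to check is the minimality bookkeeping: the $G_i$ must genuinely be smaller in the chosen order. This holds because the number of $3^+$-vertices does not increase and the total number of vertices strictly decreases. No real obstacle is expected beyond this.
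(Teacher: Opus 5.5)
Your proof is correct and follows the paper's argument. You split $G$ at the cut-vertex, apply minimality to each piece (each has fewer vertices and no more $3^+$-vertices), and use a coordinate permutation, which preserves both weight $3$ and dot products, to make the two vectors at the cut-vertex agree before gluing; your two-piece split is if anything slightly cleaner than the paper's appeal to ``2-connected components''.
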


\begin{proof}
If $G$ has a cut-vertex $v$, by minimality, each of its 2-connected components admits a valid $\binom{10}{3}$-inversion. Since all the vectors assigned to $v$ in each $2$-connected components are $\binom{10}{3}$-vectors, it suffices to permute coordinates in each component to make all vectors assigned to $v$ identical. This yields a $\binom{10}{3}$-inversion of $(G,\pi)$, contradicting the minimality of $G$.
\end{proof}

\begin{claim}\label{claim:2-4}
$G$ has no $2$-vertex adjacent to a $4^-$-vertex.
\end{claim}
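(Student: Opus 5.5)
Suppose for contradiction that $G$ contains a $2$-vertex $v$ with neighbours $u$ and $w$, where $u$ has degree at most $4$. The plan is to remove the edge $vu$, which lowers the degree of $u$, and then reinsert it by recolouring $v$ and, if needed, $u$.

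\textbf{Step 1: the reduced graph.} Let $G' = G - vu$, with the restricted colouring. It is still triangle-free and planar. In $G'$ the vertex $u$ has degree at most $3$ and $v$ has degree $1$. Its number of $3^+$-vertices is at most that of $G$, and its number of edges is smaller. Strictly, the minimality order is on the number of $3^+$-vertices and then on the number of vertices, so $G'$ is not obviously smaller. There are two ways around this. One is to delete $v$ entirely and add a new pendant vertex $v'$ adjacent to $w$ with colour $\pi(vw)$; since $v'$ has degree $1$, this keeps the vertex count. The cleaner way is to contract instead: delete $v$ and handle both $u$ and $v$ at the end. I will take the simplest version: delete $v$, obtaining $G - v$. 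This has fewer vertices and no more $3^+$-vertices, so by minimality it admits a $\binom{10}{3}$-inversion $\phi$. In $G - v$, the vertex $u$ has degree at most $3$.

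\textbf{Step 2: extending to $v$.} We now need vectors for $v$ (and possibly a new one for $u$) such that $\phi(u)\cdot\mathbf{v} = \pi(uv)$ and $\phi(w)\cdot\mathbf{v} = \pi(vw)$.
\begin{itemize}
\item If $\phi(u) \neq \phi(w)$, apply \Cref{lem:deg3choice} with $k=2$ to obtain a valid $\mathbf{v}$ directly.
\item If $\phi(u) = \phi(w)$ and $\pi(uv) = \pi(vw)$, apply \Cref{lem:deg3choice} with $k=1$.
\item If $\phi(u) = \phi(w)$ and $\pi(uv) \neq \pi(vw)$, recolour $u$ first. By \Cref{cor:deg3choice} applied in $G - v$, where $u$ has degree at most $3$, there are at least $7$ admissible vectors for $u$. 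At most one of them equals $\phi(w)$, so we can choose $\phi(u) \neq \phi(w)$ and then fall back to the first case.
\end{itemize}
Since $G$ is triangle-free, $u$ and $w$ are not adjacent, so changing $\phi(u)$ affects no constraint involving $w$.

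\textbf{Main obstacle.} The delicate point is the case $\phi(u) = \phi(w)$ with differing colours on $uv$ and $vw$. It is handled by the recolouring freedom of \Cref{cor:deg3choice}, which gives $7$ options where only one is forbidden. Everything else is a direct application of \Cref{lem:deg3choice}. This extension contradicts the choice of $G$ as a counterexample.
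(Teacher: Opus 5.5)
Your proposal is correct and is essentially the paper's argument. The paper deletes the $2$-vertex and uses minimality, then applies \Cref{cor:deg3choice} to give the low-degree neighbour a vector different from that of the other neighbour, and finishes with \Cref{lem:deg3choice}; your version differs only in recolouring just when needed rather than always, and the detour through $G - vu$ in your Step 1 can simply be dropped.
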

\begin{proof}
    Suppose $u$ is a $2$-vertex adjacent to $v$, a $4^-$-vertex, and another vertex $w$. Consider a $\binom{10}{3}$-inversion of $(G-u,\pi|_{E(G-u)})$ obtained by minimality. Since $v$ has degree at most $3$ in $G-u$, we may choose $\mathbf{v}$ distinct from $\mathbf{w}$ by Corollary~\ref{cor:deg3choice}. We can then extend the $\binom{10}{3}$-inversion to $u$ by Lemma~\ref{lem:deg3choice}, a contradiction. 
\end{proof}

\begin{claim}\label{claim:no-4cycle-2-2-vertices}
$G$ has no $4$-cycle with two $2$-vertices.
\end{claim}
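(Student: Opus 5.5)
Let the $4$-cycle be $C$. Since $G$ is triangle-free, $C$ has no chord. By \Cref{claim:2-4}, two $2$-vertices cannot be adjacent, because a $2$-vertex is in particular a $4^-$-vertex. So the two $2$-vertices $u,v$ of $C$ are opposite, and $C=uxvy$ with $N(u)=N(v)=\{x,y\}$. The plan is to delete one of $u,v$, use the minimality of $G$ to get a $\binom{10}{3}$-inversion of the rest, and extend it back by \Cref{lem:deg3choice}.

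\textbf{Choice of the deleted vertex.} If one of $u,v$, say $v$ after renaming, satisfies $\pi(vx)\neq\pi(vy)$, we delete the other one, $u$. Otherwise both satisfy $\pi(ux)=\pi(uy)$ and $\pi(vx)=\pi(vy)$, and we delete $u$ anyway.

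The graph $G-u$ is still a triangle-free $2$-edge-colored planar graph. It has no more $3^+$-vertices than $G$ and strictly fewer vertices. By the choice of $G$ as a minimal counterexample, $G-u$ therefore admits a $\binom{10}{3}$-inversion $\phi$.

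\textbf{Extending $\phi$ to $u$.} We need a $\binom{10}{3}$-vector $\mathbf{u}$ with $\mathbf{u}\cdot\phi(x)=\pi(ux)$ and $\mathbf{u}\cdot\phi(y)=\pi(uy)$. There are two cases.
\begin{itemize}
\item If $\phi(x)\neq\phi(y)$, we apply \Cref{lem:deg3choice} with $k=2$, $\mathbf{u_1}=\phi(x)$, $\mathbf{u_2}=\phi(y)$, $a_1=\pi(ux)$, $a_2=\pi(uy)$. This gives the required vector.
\item If $\phi(x)=\phi(y)$, then $\phi(v)\cdot\phi(x)=\phi(v)\cdot\phi(y)$, so $\pi(vx)=\pi(vy)$. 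By the choice above, this is only possible in the second case, where also $\pi(ux)=\pi(uy)$. The two constraints on $\mathbf{u}$ then coincide, and \Cref{lem:deg3choice} with $k=1$ supplies $\mathbf{u}$.
\end{itemize}
Either way we obtain a $\binom{10}{3}$-inversion of $(G,\pi)$, contradicting the choice of $G$.

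\textbf{Main obstacle.} The only delicate point is that $x$ and $y$ might receive equal vectors while $u$ demands different dot products with them. The choice of which $2$-vertex to delete handles exactly this. Alternatively, when $u$ and $v$ have identical colored neighbourhoods, $u$ is a twin of $v$ and one can simply copy $\phi(v)$ onto $u$, in the spirit of \Cref{lem:twins}.
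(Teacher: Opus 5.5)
Your proof is correct and is essentially the paper's argument. You delete one of the two opposite $2$-vertices so that either the deleted vertex has equally colored edges to $x,y$, or the remaining $2$-vertex forces $\phi(x)\neq\phi(y)$, and then extend via \Cref{lem:deg3choice}. Your case split is just a relabeling of the paper's, which splits on whether $\pi(ux)=\pi(uy)$ and deletes $u$ or $v$ accordingly; you also make explicit the minimality bookkeeping and the $\phi(x)=\phi(y)$ subcase, which the paper dismisses as ``easily''.
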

\begin{proof}
Suppose $G$ has two $2$-vertices $u,v$ in some $4$-cycle. By the previous claim, $u$ and $v$ cannot be adjacent. Let $x$ and $y$ be the two common neighbors of $u$ and $v$. If $\pi(ux) = \pi(uy)$, we can easily extend a $\binom{10}{3}$-inversion from $G-u$ obtained by minimality using Lemma~\ref{lem:deg3choice}. Otherwise, $\pi(ux) \ne \pi(uy)$ and we consider $G-v$, in which every $\binom{10}{3}$-inversion assigns different vectors to $x$ and $y$. We may thus apply Lemma~\ref{lem:deg3choice} again to obtain a vector for $v$, yielding a $\binom{10}{3}$-inversion of $(G,\pi)$, a contradiction.
\end{proof}

\begin{claim}\label{claim:3vertex}
$G$ has no $3$-vertex.
\end{claim}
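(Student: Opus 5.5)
The plan is to use the first minimality criterion: remove a $3$-vertex and replace it by $2$-vertex gadgets that force its neighbours to get distinct vectors. Suppose for contradiction that $v$ is a $3$-vertex with neighbours $x,y,z$. By Claim~\ref{claim:2-4}, none of $x,y,z$ is a $2$-vertex, so all three are $3^+$-vertices. Since $G$ is triangle-free, $x,y,z$ are pairwise non-adjacent.

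Build $(G',\pi')$ from $G-v$ as follows. For each pair among $\{x,y\},\{y,z\},\{x,z\}$ whose two edges to $v$ have different colours (say $\pi(vx)\neq\pi(vy)$), add a new vertex $w_{xy}$ with edges $w_{xy}x$ and $w_{xy}y$ of different colours; this is a copy of the gadget $G_2$ from \Cref{ssec:lb-planar-6}.

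We check that $G'$ is an admissible smaller instance.
\begin{itemize}
\item \emph{Planarity.} Deleting $v$ merges its incident faces into one face whose boundary contains $x,y,z$. The new $2$-vertices can all be drawn inside this face, as a subdivided triangle on $x,y,z$.
\item \emph{Triangle-freeness.} A triangle through $w_{xy}$ would need the edge $xy$, which does not exist. The new vertices are pairwise non-adjacent, so no other new triangle can appear.
\item \emph{Fewer $3^+$-vertices.} The degree of each of $x,y,z$ drops by one and then rises by at most two. Since they were already $3^+$-vertices, they stay $3^+$ and no other vertex changes status. The new vertices have degree $2$ and $v$ is gone, so $G'$ has strictly fewer $3^+$-vertices than $G$.
\end{itemize}
By the choice of $G$ as a counterexample with the minimum number of $3^+$-vertices, $(G',\pi')$ admits a $\binom{10}{3}$-inversion $\phi$.

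Restrict $\phi$ to $G-v$. For every pair with $\pi(vx)\neq\pi(vy)$, \Cref{obs:path} applied to $w_{xy}$ gives $\phi(x)\neq\phi(y)$. Hence whenever two neighbours of $v$ receive the same vector, their edges to $v$ have the same colour, so the constraints $\mathbf{v}\cdot\phi(u)=\pi(vu)$ are consistent. Remove duplicated vectors to obtain $k\le 3$ pairwise distinct $\binom{10}{3}$-vectors, each with a well-defined target value. \Cref{lem:deg3choice} then provides a $\binom{10}{3}$-vector for $v$ satisfying all three edge constraints, which is the same reasoning as in \Cref{cor:deg3choice}. This extends $\phi$ to a $\binom{10}{3}$-inversion of $(G,\pi)$, a contradiction.

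The main obstacle is ensuring that the gadget replacement does not create new $3^+$-vertices and keeps planarity and triangle-freeness. Claim~\ref{claim:2-4}, which guarantees that the neighbours of a $3$-vertex are already $3^+$, handles the degree count, and the face left by $v$ handles the embedding.
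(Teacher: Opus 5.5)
Your proof is correct and is essentially the paper's argument: delete $v$, attach degree-$2$ gadgets with differently colored edges between its neighbours so that minimality on the number of $3^+$-vertices applies and forces distinct vectors where needed, then extend to $v$ via \Cref{lem:deg3choice}. The only difference is that you add gadgets just for pairs whose edges to $v$ differ in color, whereas the paper adds all three and gets full distinctness; your consistency argument covers this. Two harmless slips: excluding degree-$1$ neighbours needs \Cref{claim:cut}, and in your version a neighbour may drop to degree $2$ rather than stay $3^+$. Neither affects the count of $3^+$-vertices.
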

\begin{proof}
Suppose $v$ has exactly three neighbors $u_1, u_2, u_3$ in $G$ (which themselves must be $3^+$-vertices, by \Cref{claim:cut} and \Cref{claim:2-4}).
Consider the graph $(G',\pi')$ constructed from $(G,\pi)$ by deleting $v$ and for each $1\leq i<j\leq 3$, adding a common neighbor to $u_i$ and $u_j$ with edges of different colors (see \Cref{fig:transfo_deg3}).

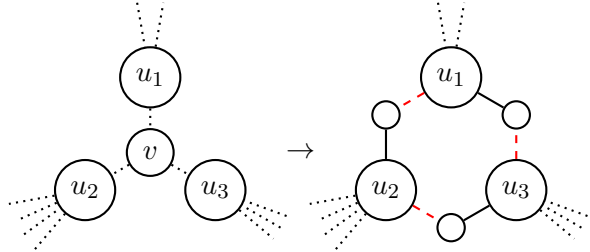
\begin{figure}[!ht]
\centering
    \begin{tikzpicture}[thick, v/.style={draw,circle}]
    \node[v] (v) at (0,0) {$v$};
    \node[v] (a) at (90:1) {$u_1$};
    \node[v] (b) at (210:1) {$u_2$};
    \node[v] (c) at (330:1) {$u_3$};
    \draw[dotted] (a) -- (v) -- (b);
    \draw[dotted] (v) -- (c);
    \draw[dotted] (95:2) -- (a) -- (85:2);
    \draw[dotted] (213:2) -- (b) -- (207:2);
    \draw[dotted] (220:2) -- (b) -- (200:2);
    \draw[dotted] (335:2) -- (c) -- (325:2);
    \draw[dotted] (330:2) -- (c);
    \node at (2,0) {$\to$};
    \tikzset{xshift=4cm}
    \node[v] (a) at (90:1) {$u_1$};
    \node[v] (b) at (210:1) {$u_2$};
    \node[v] (c) at (330:1) {$u_3$};
    \node[v] (ab) at (150:1) {};
    \node[v] (bc) at (-90:1) {};
    \node[v] (ac) at (30:1) {};
    
    \draw[dotted] (95:2) -- (a) -- (85:2);
    \draw[dotted] (213:2) -- (b) -- (207:2);
    \draw[dotted] (220:2) -- (b) -- (200:2);
    \draw[dotted] (335:2) -- (c) -- (325:2);
    \draw[dotted] (330:2) -- (c);
    
    \draw[red, dashed] (a) -- (ab);
    \draw[red, dashed] (b) -- (bc);
    \draw[red, dashed] (c) -- (ac);
    \draw (ab) -- (b);
    \draw (bc) -- (c);
    \draw (ac) -- (a);
    \end{tikzpicture}
\caption{Transformation of degree-3 vertices.}
\label{fig:transfo_deg3}
\end{figure}

$G'$ is still a triangle-free planar graph, and has strictly less $3^+$-vertices than $G$, therefore $(G',\pi')$ admits a $\binom{10}{3}$-inversion by minimality. Moreover, $u_1$, $u_2$, $u_3$ must receive distinct vectors by construction (having a common neighbor with different colors). We can then extend the $\binom{10}{3}$-inversion to $v$ by Lemma~\ref{lem:deg3choice}, a contradiction.
\end{proof}

\begin{claim}\label{claim:567}
    If $\deg(v) = 4 + t$ for some $t \in \{1,2,3\}$, then $v$ is adjacent to at most $t$ vertices of degree $2$.
\end{claim}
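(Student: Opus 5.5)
We argue by contradiction: suppose $v$ has degree $4+t$ with $t\in\{1,2,3\}$ and is adjacent to at least $t+1$ vertices of degree $2$. We pick $t+1$ of them, $w_1,\dots,w_{t+1}$. By \Cref{claim:2-4}, each $w_i$ has its other neighbor $x_i$ of degree at least $5$. By \Cref{claim:no-4cycle-2-2-vertices}, the $x_i$ are pairwise distinct; if $x_i=x_j$ we would get a $4$-cycle $v w_i x_i w_j$ with two $2$-vertices. We then build a smaller instance in which $v$ has degree at most $3$ and no new $3^+$-vertex is created, so that we may apply Corollary~\ref{cor:deg3choice} and Lemma~\ref{lem:deg3choice}.

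First we form $G'=G-\{w_1,\dots,w_{t+1}\}$. In $G'$, $v$ has degree $(4+t)-(t+1)=3$ and no new vertices are added, so $G'$ is a triangle-free planar graph with at most as many $3^+$-vertices as $G$ and strictly fewer vertices. By minimality it admits a $\binom{10}{3}$-inversion $\phi$. Since $\deg_{G'}(v)=3$, Corollary~\ref{cor:deg3choice} gives at least $7$ distinct admissible $\binom{10}{3}$-vectors for $v$ compatible with its three remaining neighbors.

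Next we choose $\mathbf v$ among these $7$ candidates so that $\mathbf v\neq \phi(x_i)$ whenever this is needed to extend to $w_i$. Each $w_i$ is a $2$-vertex with neighbors $v$ and $x_i$. If $\mathbf v\neq\phi(x_i)$, Lemma~\ref{lem:deg3choice} with $k=2$ gives a vector for $w_i$ realizing both prescribed colors. If $\mathbf v=\phi(x_i)$, we need $\pi(vw_i)=\pi(w_ix_i)$, and then Lemma~\ref{lem:deg3choice} with $k=1$ applies. The set of forbidden vectors is therefore contained in $\{\phi(x_1),\dots,\phi(x_{t+1})\}$, which has at most $t+1\le 4$ elements. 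This is fewer than $7$, so a good $\mathbf v$ exists, and extending to each $w_i$ independently yields a $\binom{10}{3}$-inversion of $(G,\pi)$, a contradiction.

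The main obstacle is that this count is quite loose, which suggests a bound like ``at most $t$'' could be attacked directly. The argument also seems to show that a vertex of degree $4+t$ cannot have even $t+1$ neighbors of degree $2$ in any case where $v$ ends up with degree at most $3$. The delicate points are, first, that removing $v$'s $2$-neighbors does not break the minimality order: it only deletes $2$-vertices, so the number of $3^+$-vertices does not increase and the total number of vertices decreases. Second, the case $\deg_{G'}(v)\le 3$ is exactly what Corollary~\ref{cor:deg3choice} requires. If more than $t+1$ neighbors are $2$-vertices, we remove only $t+1$ of them, which keeps $\deg_{G'}(v)=3$, and we handle the rest by the same recoloring as above, with the total number of forbidden vectors staying at most the number of removed $w_i$.
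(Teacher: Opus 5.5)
Your proof is correct and follows the paper's argument: delete $t+1$ of the degree-$2$ neighbors so that $v$ has degree $3$, use \Cref{cor:deg3choice} to choose $\mathbf v$ among at least $7$ candidates while avoiding the at most $t+1\le 4$ vectors of the other endpoints, then extend to the deleted vertices with \Cref{lem:deg3choice}. The additional observations (that the $x_i$ are distinct and have degree at least $5$, and your closing paragraph) are not needed but do no harm.
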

\begin{proof}
    If $v$ has $t+1$ neighbors $u_1, \dots, u_{t+1}$ of degree $2$, let $w_1,\dots,w_{t+1}$ be their respective (unique) other neighbors, and remove $u_1,\dots,u_{t+1}$ such that $v$ has $3$ neighbors left. We obtain a $\binom{10}{3}$-inversion of the resulting graph by minimality, and use \Cref{cor:deg3choice} to ensure that $\mathbf{v}$ is distinct from $\mathbf{w_1},\ldots,\mathbf{w_{t+1}}$. We can then extend the inversion to $u_{1},\dots,u_{t+1}$ by \Cref{lem:deg3choice}, a contradiction.
\end{proof}

Given that we have fixed an embedding of $G$, when considering a specific vertex $v$ we have a cyclic order $\sigma_v$ on its neighbors. If $u \in N(v)$ has degree $2$, we shall say that $u$ is (see \Cref{fig:iso_fringe_protected}):
\begin{itemize}
    \item an \emph{isolated neighbor} of $v$ if both its predecessor and its successor in $\sigma_v$ have degree $\ge 3$.
    \item a \emph{protected neighbor} of $v$ if both its predecessor and its successor in $\sigma_v$ have degree $2$.
    \item a \emph{fringe neighbor} of $v$ otherwise, i.e. exactly one of its successor or predecessor in $\sigma_v$ has degree $2$.
\end{itemize}

\begin{figure}[!ht]
\centering
    \begin{tikzpicture}[thick, v/.style={draw,circle,inner sep=2pt}]
    \node[v] (v) at (0,0) {$v$};
    \node[v,label=above:{\scriptsize isolated}] (2) at (0,1) {$2$};
    \node[v] (g) at (-1,1.5) {$3^+$};
    \node[v] (d) at (1,1.5) {$3^+$};
    \draw (g) -- (v) -- (d);
    \draw (v) -- (2);
    \tikzset{xshift=4cm}
    \node[v] (v) at (0,0) {$v$};
    \node[v,label=above:{\scriptsize fringe}] (2g) at (-.5,1) {$2$};
    \node[v,label=above:{\scriptsize fringe}] (2d) at (.5,1) {$2$};
    \node[v] (g) at (-1.5,1.5) {$3^+$};
    \node[v] (d) at (1.5,1.5) {$3^+$};
    \draw[bend right] (g) to (v);
    \draw[bend right] (v) to (d);
    \draw (2d) -- (v) -- (2g);
    \tikzset{xshift=4cm}
    \node[v] (v) at (0,0) {$v$};
    \node[v,label=above:{\scriptsize fringe}] (2g) at (-.75,1) {$2$};
    \node[v,label=above:{\scriptsize fringe}] (2d) at (.75,1) {$2$};
    \node[v,pin=above:{\scriptsize protected}] (2m) at (0,1) {$2$};
    \node[v] (g) at (-1.5,1.5) {$3^+$};
    \node[v] (d) at (1.5,1.5) {$3^+$};
    \draw[bend right] (g) to (v);
    \draw[bend right] (v) to (d);
    \draw (2d) -- (v) -- (2g);
    \draw (2m) -- (v);
    \end{tikzpicture}
\caption{Examples of isolated, fringe, and protected neighbors of $v$}
\label{fig:iso_fringe_protected}
\end{figure}
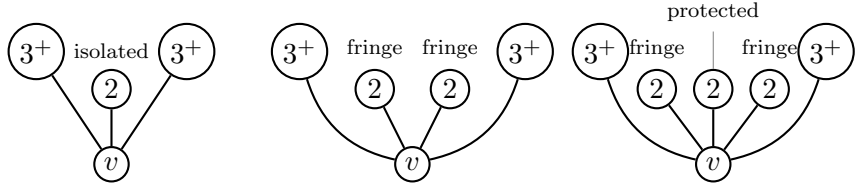

Note that if all neighbors of $v$ have degree $2$, then they are all protected. 
Moreover, the fact that $G$ is triangle-free, along with \Cref{claim:no-4cycle-2-2-vertices} ensure the following.

\begin{claim}\label{claim:fringe_protected}
    A fringe neighbor belongs to at least one $5^+$-face (containing the other $2$-vertex). Moreover, a protected neighbor is adjacent to at least two $5^+$-faces. 
\end{claim}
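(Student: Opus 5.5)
The plan is to read both assertions directly off the local embedding around $v$, using three ingredients: triangle-freeness, \Cref{claim:cut}, and \Cref{claim:no-4cycle-2-2-vertices}.

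Let $u$ be a $2$-vertex neighbor of $v$, with other neighbor $w$. Let $u'$ be a neighbor of $v$ that is consecutive to $u$ in $\sigma_v$ and also has degree $2$, with other neighbor $w'$. Since $G$ has no cut-vertex, no face boundary repeats a vertex. Hence the face $f$ whose boundary contains the consecutive edges $uv$ and $vu'$ contains the walk $w,u,v,u',w'$.

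We then bound the length of $f$. Triangle-freeness gives $w \neq u'$ and $w' \neq u$. If $w = w'$, then $v,u,w,u'$ is a $4$-cycle containing the two $2$-vertices $u$ and $u'$, which \Cref{claim:no-4cycle-2-2-vertices} forbids. So $w \neq w'$, and the boundary of $f$ contains at least the five distinct vertices $w,u,v,u',w'$. This also rules out $f$ closing up early through an edge $ww'$, since $f$ would then still be a $5$-face. If instead $w$ were adjacent to $v$, the vertices $v,u,w$ would form a triangle, which is impossible. Therefore $f$ is a $5^+$-face containing both $2$-vertices.

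This gives both statements. For a fringe neighbor $u$, apply the argument to its unique degree-$2$ cyclic neighbor $u'$; this yields a $5^+$-face containing $u$ and $u'$. For a protected neighbor $u$, apply the argument to its predecessor and to its successor. This yields the two faces on either side of $u$ around $v$, both $5^+$-faces.

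The one delicate case is when $v$ has degree $2$, so that predecessor and successor coincide and the two faces could in principle be the same face. Then $v$ and $u$ are adjacent $2$-vertices, which contradicts \Cref{claim:2-4}. More generally, the two faces at $v$ on either side of $uv$ are distinct because $G$ is $2$-connected and the degree of $v$ is at least $3$. I expect this distinctness, together with the requirement that face boundaries be cycles, to be the only step needing care. Both follow from \Cref{claim:cut}, since in a $2$-connected plane graph every face is bounded by a cycle.
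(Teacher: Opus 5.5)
Your proof is correct and follows the same route as the paper, which justifies this claim in one line by triangle-freeness together with \Cref{claim:no-4cycle-2-2-vertices}. You make explicit the facial walk $w,u,v,u',w'$ with $w\neq w'$, and you use \Cref{claim:cut} to ensure that face boundaries are cycles and that the two faces on either side of $uv$ are distinct — details the paper leaves implicit.
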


We now derive a contradiction using a simple discharging argument, which amounts to double-counting the sum $\omega(G):= \sum_v \omega(v) + \sum_f \omega(f)$ of the following initial charges.

\noindent\textbf{Initial charges:}
\begin{itemize}
    \item $\omega(v) = \deg(v) - 4$.
    \item $\omega(f) = \deg(f) - 4$.
\end{itemize}

By Euler's formula, $w(G) =  -8$. We now redistribute the charges according to the following rules, and reach a contradiction by proving that every vertex and every face has a non-negative charge after discharging. This will conclude and yield~\Cref{thm:girth4-upper}.

\noindent\textbf{Discharging rules:}
\begin{itemize}
    \item Every $5^+$ vertex gives $1$ to its isolated neighbors, $\frac{3}{4}$ to its fringe neighbors, and $\frac{1}{2}$ to its protected neighbors.
    \item Every $5^+$-face $f$ gives $\frac{1}{2}$ to its vertices of degree $2$.
\end{itemize}

\begin{claim}
    After discharging, all faces have non-negative charge.
\end{claim}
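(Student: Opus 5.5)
Since $G$ is triangle-free, every face has degree at least $4$, so every face starts with charge $\omega(f)=\deg(f)-4\geq 0$. The plan is to show that a face never gives away more than its surplus. Faces only lose charge through the second rule, so only $5^+$-faces can lose anything; a $4$-face keeps charge $0$.

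Fix a $5^+$-face $f$ of degree $d$. By \Cref{claim:cut}, its boundary is a cycle with $d$ distinct vertices. By \Cref{claim:2-4}, no two $2$-vertices are adjacent; otherwise one $2$-vertex would be adjacent to a $4^-$-vertex. Hence the $2$-vertices on $f$ form an independent set in a cycle of length $d$, so there are at most $\lfloor d/2\rfloor$ of them. Each receives $\frac12$ from $f$, so $f$ gives at most $\frac12\lfloor d/2\rfloor$.

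It remains to check the inequality
\[
d-4\;\geq\;\tfrac12\lfloor d/2\rfloor \qquad\text{for } d\geq 5.
\]
For $d=5$ the left side is $1$ and the right side is $1$. For $d=6$ the left side is $2$ and the right side is $\frac32$. For larger $d$, the left side grows by $1$ per unit of $d$ while the right side grows by at most $\frac14$ per unit on average, so the inequality persists.

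The only point needing care is the argument that the boundary walk is a genuine cycle, which is what lets the independence bound count $2$-vertices correctly. This follows from 2-connectivity, given by \Cref{claim:cut}, together with $G$ not being a single cycle. A single cycle is excluded because it has no $3^+$-vertices, and such a graph would trivially admit a $\binom{10}{3}$-inversion, contradicting that $G$ is a counterexample. Hence every face has non-negative final charge.
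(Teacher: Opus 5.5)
Your proof is correct and follows the paper's argument. Triangle-freeness rules out $3$-faces and $4$-faces lose nothing; on a $5^+$-face of degree $d$, \Cref{claim:2-4} limits the $2$-vertices to at most $\lfloor d/2\rfloor$, giving final charge at least $d-4-\frac12\lfloor d/2\rfloor\ge 0$. Your aside excluding the case where $G$ is a single cycle is unnecessary, since every face of a $2$-connected plane graph (cycles included) is bounded by a cycle, and \Cref{claim:2-4} already rules out $G$ being a cycle.
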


\begin{proof}
    There are no $3$-faces due to the absence of triangles, and $4$-faces are unaffected. Faces of degree $d \ge 5$ contain at most $\lfloor\frac{d}{2}\rfloor$ vertices of degree $2$ by \Cref{claim:2-4}, hence they end up with a charge of at least $d-4-\lfloor d/2\rfloor/2 \geq 0 $.
\end{proof}

\begin{claim}
    After discharging, all $2$-vertices have non-negative charge.
\end{claim}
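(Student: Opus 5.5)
A $2$-vertex $u$ starts with charge $\omega(u)=2-4=-2$. It only receives charge: from its two neighbours, both $5^+$-vertices, and from its incident $5^+$-faces. So I need to show that what it receives is at least $2$.

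\textbf{Neighbours are $5^+$.} Let $v_1,v_2$ be the neighbours of $u$. By \Cref{claim:2-4}, $u$ has no $4^-$-neighbour, so both $v_i$ have degree at least $5$. Hence both apply the first discharging rule to $u$, which is isolated, fringe or protected with respect to each $v_i$ separately. I will also use that $G$ has no cut-vertex (\Cref{claim:cut}), so $u$ lies on exactly two distinct faces $f_1,f_2$, each corresponding to one of the two angles at $v_i$ adjacent to the edge $uv_i$.

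\textbf{Faces seen from one neighbour.} Fix $v=v_i$. The two faces at $u$ are exactly the faces containing the consecutive pairs (predecessor of $u$, $u$) and ($u$, successor of $u$) in $\sigma_v$.
\begin{itemize}
\item If $u$ is isolated for $v$, then $v$ alone gives $1$.
\item If $u$ is fringe for $v$, then $v$ gives $\frac34$. By \Cref{claim:fringe_protected}, the face spanned by $u$ and its degree-$2$ consecutive neighbour is a $5^+$-face: otherwise it would be a $4$-face containing two $2$-vertices, contradicting \Cref{claim:no-4cycle-2-2-vertices}, and triangles are excluded. So at least one of $f_1,f_2$ is a $5^+$-face.
\item If $u$ is protected for $v$, then $v$ gives $\frac12$, and both $f_1,f_2$ are $5^+$-faces, each giving $\frac12$.
\end{itemize}
The key point is that $f_1,f_2$ are the same two faces whichever neighbour we look from, so face contributions must not be double counted.

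\textbf{Case analysis.} Let $F\in\{0,1,2\}$ be the number of $5^+$-faces among $f_1,f_2$. The total received is $c(v_1)+c(v_2)+F/2$, where $c(v_i)\in\{1,\frac34,\frac12\}$. If some $v_i$ sees $u$ as protected, then $F=2$, so the faces give $1$ and the two vertices give at least $\frac12+\frac12$, for a total of at least $2$. Otherwise each $c(v_i)\ge\frac34$.
\begin{itemize}
\item If both are isolated, the total is already $2$.
\item If at least one is fringe, then $F\ge1$.
\item If exactly one is fringe, the total is at least $1+\frac34+\frac12>2$.
\item If both are fringe with $F\ge1$, the total is at least $\frac34+\frac34+\frac12=2$.
\end{itemize}

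The main obstacle is the claim that the relevant $5^+$-faces really are faces incident to $u$, and that the fringe and protected witnesses coincide with $f_1,f_2$. This needs the rotation-system description: consecutive edges $vu,vw$ in $\sigma_v$ bound a common face. It also needs the absence of repeated vertices on a face boundary, which follows from \Cref{claim:cut}. I would state that fact explicitly before running the case check above.
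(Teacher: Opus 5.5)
Your proof is correct and follows the paper's case analysis. If $u$ is protected for some neighbour, it lies on two $5^+$-faces; otherwise the isolated/fringe combinations suffice, with at least one $5^+$-face whenever some neighbour sees $u$ as fringe. Your extra remarks make explicit what the paper leaves implicit: that both neighbours see the same two faces at $u$ (via the rotation system and \Cref{claim:cut}), so the face contributions are not double counted.
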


\begin{proof}
    Let $v$ be a vertex of degree $2$ (which initially has a charge of $-2$). Let $x,y$ be the two neighbors of $v$, note that these are $5^+$-vertices by \Cref{claim:2-4}. If $v$ is a protected neighbor of $x$ or a protected neighbor of $y$, we are done. Indeed, $v$ is then incident to two $5^+$-faces by \Cref{claim:fringe_protected}, receiving charge at least $1$ from them, and receives at least $1$ in total from $x$ and $y$. 
    
    If $v$ is an isolated neighbor of $x$, then either $v$ is an isolated neighbor of $y$, in which case we are done, or $v$ belongs to at least one $5^+$-face, and we are also done. By symmetry, if $v$ is an isolated neighbor of $y$, we are done.

    Finally, if $v$ is both a fringe neighbor of $x$ and $y$, then $v$ belongs to at least one $5^+$-face, and so $v$ gains a charge of at least $\frac{3}{4} + \frac{3}{4} + \frac{1}{2} = 2$.
\end{proof}

\begin{claim}
    After discharging, all $3^+$-vertices have non-negative charge.
\end{claim}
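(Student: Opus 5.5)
By \Cref{claim:cut} and \Cref{claim:3vertex}, every $3^+$-vertex of $G$ is in fact a $4^+$-vertex. Also, $3^+$-vertices only ever give charge, and only under the first rule, which concerns $5^+$-vertices. So $4$-vertices keep their initial charge $0$, and the only case to check is a vertex $v$ with $\deg(v)=d\ge 5$. Such a vertex starts with charge $d-4$. It must end non-negative after paying $1$ per isolated, $\frac34$ per fringe and $\frac12$ per protected neighbor of degree $2$.

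Let $n_2$ denote the number of $2$-neighbors of $v$. I would organize the $2$-neighbors by the cyclic order $\sigma_v$ into maximal runs of consecutive $2$-vertices.

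If $n_2=d$, all neighbors are protected and $v$ pays $d/2\le d-4$ whenever $d\ge 8$. For $d\in\{5,6,7\}$ this case is impossible, since \Cref{claim:567} gives $n_2\le d-4<d$.

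Otherwise there is at least one run, and each run is bounded on both sides by $3^+$-neighbors. A run of length $1$ costs $1$. A run of length $\ell\ge 2$ costs $2\cdot\frac34+(\ell-2)\cdot\frac12=\frac{\ell+1}{2}$. In every case a run costs at most $\ell$, so the total paid is at most $n_2$.

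For $d\in\{5,6,7\}$, \Cref{claim:567} gives $n_2\le d-4$, and we are done. For $d\ge 8$ I would use the finer count. With $r$ runs, the charge paid is at most $\frac{n_2+r}{2}$, using that a run of length $1$ costs $1=\frac{1+1}{2}$. There are at least $r$ neighbors of degree $\ge 3$ separating the runs, so $n_2+r\le d$. Hence the payment is at most $d/2\le d-4$ for $d\ge 8$.

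The main point to verify is the per-run cost bound $\frac{\ell+1}{2}$, which follows directly from the definitions of isolated, fringe and protected neighbors, together with the separation bound $r\le d-n_2$. One degenerate case needs checking: a run may wrap around the whole cyclic order, but that is exactly the case $n_2=d$ handled above. This completes the claim, and hence contradicts $\omega(G)=-8$.
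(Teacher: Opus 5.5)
Your argument is correct and is essentially the paper's proof: degrees $5$ to $7$ are handled by \Cref{claim:567} (each $2$-neighbor costs at most $1$), and for $d\ge 8$ your runs of $2$-neighbors, each paired with a separating $3^+$-neighbor, are exactly the paper's blocks, giving total cost at most $d/2\le d-4$. The only cosmetic difference is that you handle $4$-vertices by noting that only $5^+$-vertices give charge, whereas the paper invokes \Cref{claim:2-4}; both are valid.
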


\begin{proof}
    There are no $3$-vertices in $G$ by \Cref{claim:3vertex}, and $4$-vertices do not lose any charge as they are not adjacent to any $2$-vertices by \Cref{claim:2-4}. By \Cref{claim:567}, vertices of degree $4+t$ lose at most $t$, for $t \in \{1,2,3\}$. Finally, if a vertex $v$ has degree $d \ge 8$, we claim that $v$ loses a charge of at most $d/2$. This is clear if all neighbors of $v$ are $2$-vertices, thus protected, therefore we shall assume that there is at least one $3^+$-vertex in $N(v)$. A \emph{block} around $v$ consists of a $3^+$-vertex and all of its consecutive successors of degree $2$ until the next $3^+$-vertex in $\sigma_v$. Consider any block $u_1, \dots, u_k$: we claim that this block takes a charge of at most $k/2$ from $v$. If $k=1$, then no charge is taken from $v$. If $k=2$, the block contains an isolated vertex that receives a charge of $1$ from $v$. If $k\ge3$, the block contains two fringe neighbors and $k-3$ protected neighbors, and thus receives a total charge of $3/2 + (k-3)/2 = k/2$ from $v$. Since all blocks around $v$ cost at most half their size, and form a disjoint union of $N(v)$, we conclude that $v$ gives a charge of at most $d/2$ to its neighborhood.
\end{proof}

\subsection{Upper bounds for girth 5 and 6}

The goal of this section is to show the following.

\begin{theorem}
\label{thm:girth56}
    Every planar graph of girth 5 has inversion diameter at most $7$,
    and every planar graph of girth 6 has inversion diameter at most $5$. 
\end{theorem}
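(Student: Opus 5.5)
The plan is to use the homomorphism framework, via \Cref{prop:hom}. We need two $2$-edge-colored targets: one for planar graphs of girth $5$, one for planar graphs of girth $6$. We then bound the norm of each target by $7$ and by $5$ respectively. For the targets, I would invoke known results on $2$-edge-colored (signified) homomorphisms of sparse planar graphs from the literature (e.g.\ work by Montejano, Ochem, Pinlou, Raspaud, Sopena and Borodin et al.). The relevant results are:
\begin{itemize}
\item every $2$-edge-colored planar graph of girth at least $5$ maps to a $2$-edge-colored graph on $16$ vertices;
\item every $2$-edge-colored planar graph of girth at least $6$ maps to a $2$-edge-colored graph on $8$ vertices.
\end{itemize}
These targets are typically built as Paley-like or Tromp-like constructions, so they have a lot of symmetry. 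If these results are stated for girth-constrained planar graphs with a specific target $(T,\tau)$, then \Cref{prop:hom} reduces the theorem to showing $\|\tau\|\le 7$ and $\|\tau\|\le 5$. By \Cref{obs:main}, this means exhibiting a $7$-inversion, respectively a $5$-inversion, of the single fixed coloring $\tau$. Note that a target only needs to satisfy this for the one coloring $\tau$, not for all colorings of $T$.

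The key step is constructing these explicit vector assignments $\mathbf{i}:V(T)\to\mathbb{F}_2^t$ with $\mathbf{i}(x)\cdot\mathbf{i}(y)=\tau(xy)$ on every edge. I would proceed in the style of Lemmas~\ref{lem:1} and~\ref{lem:2}: first apply a few inversions that create twins or anti-twins using the symmetries of the target, and remove the twins by \Cref{lem:twins}. This shrinks the target to a smaller graph whose norm can be bounded by hand or by a small computer search, as done elsewhere in the paper. Alternatively, I would search directly for a $t$-inversion by computer, since the targets have only $8$ or $16$ vertices and $t\le 7$. A Gram-matrix viewpoint also helps. The adjacency-with-colors data prescribes $\mathbf{x}\cdot\mathbf{y}$ only on edges, with free diagonal and non-edge entries. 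We need a symmetric $\mathbb{F}_2$ matrix of that form with a factorization $M=V^\top V$ with $V$ having $t$ rows. Over $\mathbb{F}_2$, such a factorization exists roughly when the rank is at most $t$ (with an extra parity condition when the diagonal is zero). So it suffices to choose the free entries to make the rank at most $7$, respectively at most $5$.

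The main obstacle is the choice of targets. If the literature target for girth $5$ is too large or has too rich a structure, its norm may exceed $7$. In that case I would instead prove a homomorphism result directly for a carefully chosen smaller target with a low-rank completion. This would go via a minimal-counterexample discharging argument using Euler's formula for girth $5$ and $6$. The reducible configurations would be $1$-vertices, $2$-vertices and short threads, and they rely on an extension property of the target: for each vertex set of size at most $2$ (or $3$) and each prescribed coloring pattern, there are enough common neighbors with the right colors. So I would first try the known targets with a computer-assisted rank completion. Only if that fails would I design a target tailored to the extension properties and redo the discharging.
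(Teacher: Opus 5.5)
Your overall strategy matches the paper's: apply \Cref{prop:hom} to a known target and bound the target's norm. But the proposal stops exactly where the proof has to do its work, so there is a genuine gap. First, you never fix the targets. The paper uses the Tromp graphs $\hat{SP_9}$ and $\hat{SP_5}$ of the Paley graphs, from Montejano et al., on $20$ and $12$ vertices. You quote unnamed targets on $16$ and $8$ vertices, which leaves nothing concrete to compute on. Second, you never show that any target has norm at most $7$, resp.\ $5$, and you concede it ``may exceed $7$''. Deferring to an unspecified computer search, or to a discharging proof for a target not yet designed, leaves the numbers $7$ and $5$ unproved, and they are the entire content of the theorem.

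The missing idea is how the norm of a Tromp graph relates to that of its base: $\|\hat{\pi}\|\le\|1-\pi\|+2$. Take a $k$-inversion $\varphi$ of $(G,1-\pi)$ and assign:
\begin{itemize}
\item $u\mapsto(0,1,\varphi(u)_1,\ldots,\varphi(u)_k)$ and its anti-twin $u'\mapsto(1,0,\varphi(u)_1,\ldots,\varphi(u)_k)$;
\item $\infty\mapsto(1,0,0,\ldots,0)$ and $\infty'\mapsto(0,1,0,\ldots,0)$.
\end{itemize}
The two leading coordinates add $1$ to $\mathbf{u}\cdot\mathbf{v}$ and $0$ to $\mathbf{u}\cdot\mathbf{v'}$, which is exactly the colour flip built into $\hat\pi$. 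They also give the prescribed colours on all edges to $\infty$ and $\infty'$. Since $SP_q$ is autodual, $\|1-\pi\|=\|\pi\|$. So it suffices to exhibit a $5$-inversion of $SP_9$, whose $1$-edges form the $3\times 3$ rook's graph, and a $3$-inversion of $SP_5$, whose $1$-edges form a $5$-cycle. For $SP_5$, the vectors $100,101,011,010,111$ in cyclic order work. Note that the anti-twin pairs $u,u'$ are absorbed by the two extra coordinates rather than removed as in \Cref{lem:1}. Without this reduction and the two explicit small inversions, your plan does not yet yield the stated bounds.
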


To prove this, we combine Proposition~\ref{prop:hom} with the results from Montejano et al.~\cite{MONTEJANO20101365}. These provide targets for planar graphs of girth $5$ (or $6$).
We may then show that these targets have small inversion diameter to conclude. 

Let us first define the considered targets. Given a prime power $q =1\mod 4$, the \emph{Paley graph} of order $q$, denoted by $SP_q$, is a clique on $q$ vertices, each of them associated with one element of the finite field $\mathbb{F}_q$, and each edge $xy$ is colored $1$ if $x-y$ is a square, $0$ otherwise. It is well-known that exchanging $0$ and $1$ in the latter definition gives an isomorphic 2-edge-colored graph: we say that $SP_q$ is \emph{autodual}.

Given a $2$-edge-colored graph $(G,\pi)$, its \emph{Tromp} graph $(\hat{G},\hat{\pi})$ is formed from two copies of $G$ together with two additional vertices as follows:
\begin{itemize}
    \item $V(\hat{G})=V(G)\cup\{u'; u\in V(G)\}\cup\{\infty,\infty'\}$
    \item $E(\hat{G}) = \bigcup_{u \in V(G)} \{u\infty, u\infty', u'\infty, u'\infty'\}\cup \bigcup_{uv \in E(G)}\{uv, uv', u'v, u'v'\}$
    \item For each $u\in V(G)$, $\hat{\pi}(u\infty)=\hat{\pi}(u'\infty')=0$ and $\hat{\pi}(u\infty')=\hat{\pi}(u'\infty)=1$.
    \item For each edge $uv\in E(G)$, $\hat{\pi}(uv)=\hat{\pi}(u'v')=\pi(uv)$ and $\hat{\pi}(u'v)=\hat{\pi}(uv')=1-\pi(uv)$.
\end{itemize}
Note that each vertex $u'$ is an anti-twin of $u$, meaning they have the same neighborhood, but with opposite colors: $uv$ and $u'v$ always get distinct colors.

With these definitions, we may now recall the results from~\cite{MONTEJANO20101365}.

\begin{theorem}[\cite{MONTEJANO20101365}]
$\hat{SP_9}$ (resp. $\hat{SP_5}$) is a target for planar graphs of girth $5$ (resp. $6$).
\end{theorem}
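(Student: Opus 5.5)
The plan is the classical minimal-counterexample scheme for homomorphisms of sparse planar graphs: extract extension properties from the algebraic structure of $(\hat{SP_q},\hat\pi)$, use them to show that certain local configurations are reducible, and then show by discharging that every planar graph of girth $5$ (resp. $6$) contains one of them. Throughout, let $(G,\pi)$ be a counterexample minimizing $|V(G)|+|E(G)|$. Then $G$ is connected and every proper subgraph maps to the target.

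\textbf{Step 1: extension properties of the target.} Since $u$ and $u'$ are anti-twins and $\infty,\infty'$ are joined to everything, I expect $\hat{SP_q}$ to behave like a $2$-edge-colored ``extension'' graph. Concretely, for any set $S$ of at most $k$ vertices, no two of which are equal or anti-twins, and any prescribed colors $a_s\in\mathbb{F}_2$ for $s\in S$, there should be a vertex $x$ adjacent to all of $S$ with $\hat\pi(xs)=a_s$. For $SP_q$ this follows from character-sum estimates, or by direct inspection for $q=5,9$: in $SP_9$, any two vertices have common neighbors of each of the four color patterns. The doubling $u\mapsto u'$ together with $\infty,\infty'$ then lifts these properties to the Tromp graph. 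From them I would also derive a ``walk'' property: between any two vertices of the target, and for any coloring of a path of length $\ell\ge \ell_0$ (with $\ell_0$ small, around $3$ for $q=9$), there is a homomorphic image of that path joining them. This amounts to counting colored walks, which can be done by hand or by computer.

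\textbf{Step 2: reducible configurations.} Using Step~1, I would show that $G$ has no $1$-vertex. For $\hat{SP_9}$, I would also show that $G$ has no path of consecutive $2$-vertices of the length allowed by the walk property, and no $3$-vertex adjacent to too many $2$-vertices. The argument is the same each time: delete the configuration, map the rest by minimality, and re-extend using the prescribed-colors property. One subtlety is that two attachment vertices might be mapped to equal or anti-twin vertices. This is handled either by gadgets as in \Cref{claim:3vertex}, which add a common neighbor with different colors and thereby force distinct images, or by noting that anti-twin images merely flip the needed colors, which the richness of the target absorbs.

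\textbf{Step 3: discharging.} I would set initial charges $\omega(v)=3\deg(v)-10$ and $\omega(f)=2\deg(f)-10$ for girth $5$, and $\omega(v)=2\deg(v)-6$ and $\omega(f)=\deg(f)-6$ for girth $6$. Both sums are negative by Euler's formula. Vertices of degree at least $3$ and long faces then send charge to $2$-vertices along threads, and the forbidden configurations guarantee that every final charge is non-negative, a contradiction.

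\textbf{Main obstacle.} The main obstacle is the precise bookkeeping between Steps~1 and~2, namely determining exactly which thread lengths and which degree/$2$-neighbor combinations are reducible for the small targets $\hat{SP_5}$ and $\hat{SP_9}$. For $\hat{SP_5}$ in particular, the extension property only holds for pairs, not triples. I would try first to compute the walk-extension tables explicitly, and then tune the discharging rules to exactly those reducible configurations.
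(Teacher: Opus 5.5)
The paper does not prove this statement; it imports it from Montejano et al.~\cite{MONTEJANO20101365}, so your outline has to stand on its own. Its overall shape (minimal counterexample, extension properties of the Tromp target, discharging) is the standard route for such results. However, it is a plan whose substance is deferred: which configurations are reducible, and which rules make the final charges non-negative. The parts you do commit to do not work.

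\textbf{The configuration list is not unavoidable.} For girth $5$, your charges give each $3$-vertex $3\cdot 3-10=-1$ and each $5$-face $0$, so $3$-vertices must receive charge rather than send it. The dodecahedron is planar, cubic, of girth $5$, with all faces pentagons, and has no vertex of degree at most $2$. It therefore contains none of your configurations, yet its charges sum to $-20$, so no discharging rules can make all final charges non-negative. You need a reducible configuration made of $3$-vertices alone. For $\hat{SP_9}$ one is available: any three vertices that are pairwise neither equal nor anti-twins have common neighbours realizing all $8$ colour patterns. Here $\infty,\infty'$ give the two constant patterns, and the other six come from $SP_9$, whose colour classes are both $K_3\square K_3$. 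This makes a $3$-vertex with $3^+$-neighbours reducible, once collisions are handled. For $\hat{SP_5}$ the triple property fails, as you note, so for girth $6$ the configurations around $3$-vertices need genuine re-mapping arguments, which you have not supplied.

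\textbf{The collision handling is unsound.} The gadget of \Cref{claim:3vertex} (a new common neighbour joined with \emph{different} colours) forbids equal images but not anti-twin images. If $u_i\mapsto x$ and $u_j\mapsto x'$, every common neighbour of $x,x'$ sees them with different colours, so the gadget is satisfiable. Yet anti-twin images are fatal exactly when $v$ sees $u_i,u_j$ with the same colour, just as equal images are fatal when the colours differ. In those cases no image for $v$ exists, so no ``richness'' of the target can absorb the collision.

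Adding both kinds of common neighbour would create a $4$-cycle and leave the class. What works is a gadget copying $v$'s colours: for each pair, add $w_{ij}$ joined to $u_i,u_j$ with colours $\pi(vu_i),\pi(vu_j)$. This keeps planarity and girth at least $\min(g,6)$. A cycle through a single $w_{ij}$ has the length of a cycle through $v$, and every other new cycle has length at least $6$.

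Finally, such a replacement removes $4$ from $|V(G)|+|E(G)|$ and adds $9$, so your minimality measure does not permit it. You need a lexicographic measure counting $3^+$-vertices first, as in \Cref{claim:3vertex}, together with the hypothesis that the $u_i$ are already $3^+$-vertices.
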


We can now get our upper bounds by computing the inversion diameter of $\hat{SP_9}$ and $\hat{SP_5}$. We first show how the inversion diameter is impacted by the Tromp operation.

\begin{proposition}
    For every 2-edge-colored graph $(G,\pi)$, letting $(\hat{G},\hat{\pi})$ be its Tromp graph, $\|\hat{\pi}\|\leq \|1-\pi\|+2$.
\end{proposition}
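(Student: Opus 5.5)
The plan is to build a $(t+2)$-inversion of $\hat{\pi}$ directly from a $t$-inversion of the complementary coloring $1-\pi$, where $t=\|1-\pi\|$. The extra two coordinates will act as a ``sign'' gadget: they add $1$ to the dot product on the edges $uv$ and $u'v'$, add nothing on the mixed edges $uv'$ and $u'v$, and handle the edges to $\infty$ and $\infty'$. We use the correspondence between inversion sequences of length $t$ and $t$-inversions for a single coloring, which is the per-coloring content of \Cref{obs:main}. That is, a coloring $\rho$ has $\|\rho\|\le t$ if and only if $\rho$ admits a $t$-inversion, obtained by letting coordinate $i$ of $\mathbf{u}$ record whether $u$ belongs to the $i$-th inverted set. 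So it suffices to exhibit a $(t+2)$-inversion for $\hat\pi$.

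Fix a $t$-inversion $u\mapsto \mathbf{x}_u\in\mathbb{F}_2^t$ of $1-\pi$, so that $\mathbf{x}_u\cdot\mathbf{x}_v=1-\pi(uv)$ for every $uv\in E(G)$. For every $u\in V(G)$, assign in $\mathbb{F}_2^{t+2}$
\[
\mathbf{u}=(\mathbf{x}_u,1,0),\qquad \mathbf{u'}=(\mathbf{x}_u,0,1),\qquad \boldsymbol{\infty}=(\mathbf{0},0,1),\qquad \boldsymbol{\infty'}=(\mathbf{0},1,0).
\]
We then check each edge type of $\hat G$.

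\begin{itemize}
\item For $uv\in E(G)$, $\mathbf{u}\cdot\mathbf{v}=\mathbf{x}_u\cdot\mathbf{x}_v+1=\pi(uv)$, and likewise $\mathbf{u'}\cdot\mathbf{v'}=\pi(uv)$.
\item For the mixed edges, $\mathbf{u}\cdot\mathbf{v'}=\mathbf{u'}\cdot\mathbf{v}=\mathbf{x}_u\cdot\mathbf{x}_v+0=1-\pi(uv)$, as required.
\item For the edges to the special vertices, $\mathbf{u}\cdot\boldsymbol{\infty}=0$, $\mathbf{u'}\cdot\boldsymbol{\infty}=1$, $\mathbf{u}\cdot\boldsymbol{\infty'}=1$ and $\mathbf{u'}\cdot\boldsymbol{\infty'}=0$, matching $\hat\pi$.
\end{itemize}

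Since $\infty\infty'$ is not an edge, nothing else needs checking. Hence $\hat\pi$ admits a $(t+2)$-inversion, and $\|\hat\pi\|\le t+2=\|1-\pi\|+2$.

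The only real obstacle is finding the two-coordinate gadget. The requirements are that the tags of $u$ and $u'$ each have odd weight, so that they contribute $1$ on $uv$ and $u'v'$, and are mutually orthogonal, so that they contribute $0$ on $uv'$ and $u'v$. In $\mathbb{F}_2^2$ this forces the tags $(1,0)$ and $(0,1)$. The vectors for $\infty$ and $\infty'$ are then determined by the four required colors. With these choices the verification above is immediate.
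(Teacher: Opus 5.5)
Your proof is correct and is essentially the paper's own argument. You append the same two-coordinate tags to a $\|1-\pi\|$-inversion of $1-\pi$ and check the same edge types. The only differences are cosmetic: the paper places the extra coordinates first and uses the tags $(0,1)$ for $u$, $(1,0)$ for $u'$, $(1,0)$ for $\infty$ and $(0,1)$ for $\infty'$, which is your assignment with the two coordinates swapped.
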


\begin{proof}
    Consider an inversion $\varphi$ of dimension $k:=\|1-\pi\|$ for $(G,1-\pi)$. We construct an inversion $\hat{\varphi}$ of dimension $k+2$ for $(\hat{G},\hat{\pi})$ as follows:
    \begin{itemize}
        \item For each $u\in V(G)$, $\hat{\varphi}(u)=(0,1,\varphi(u)_1,\ldots,\varphi(u)_k)$ and $\hat{\varphi}(u')=(1,0,\varphi(u)_1,\ldots,\varphi(u)_k)$. 
        \item $\hat{\varphi}(\infty)=(1,0,0,\ldots,0)$, $\hat{\varphi}(\infty')=(0,1,0,\ldots,0)$.
    \end{itemize}
    Now observe that if $u\neq v\in V(G)$, 
    \begin{itemize}
        \item $\hat{\varphi}(u)\cdot\hat{\varphi}(\infty)=0=\hat{\pi}(u\infty)$.
        \item $\hat{\varphi}(u')\cdot\hat{\varphi}(\infty)=1=\hat{\pi}(u'\infty)$.
        \item $\hat{\varphi}(u)\cdot\hat{\varphi}(v)=1+\varphi(u)\cdot\varphi(v)=1+(1-\pi)(uv)=\hat{\pi}(uv)$.
        \item $\hat{\varphi}(u)\cdot\hat{\varphi}(v')=\varphi(u)\cdot\varphi(v)=(1-\pi)(uv) =\hat{\pi}(uv')$.
    \end{itemize}
    The remaining cases being symmetric, we get that $\hat{\varphi}$ is an $k+2$-inversion for $(\hat{G},\hat{\pi})$, which concludes.
\end{proof}

Recalling that each of the targets $SP_9$ and $SP_5$ are autodual, the previous proposition allows us to conclude by providing a $5$-inversion for $SP_9$ and a $3$-inversion for $SP_5$. Those are provided in \Cref{SZ4} (where for simplicity only the 1-colored edges are drawn), which ends the proof of Theorem~\ref{thm:girth56}.

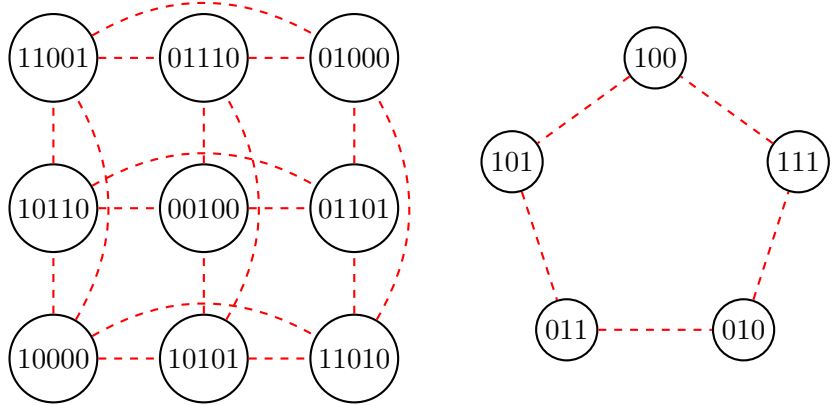
\begin{figure}[!ht]
    \centering
    \begin{tikzpicture}[thick, every node/.style={draw,circle,inner sep=2pt},scale=2]
    \node (00) at (0,0) {10000};
    \node (01) at (1,0) {10101};
    \node (02) at (2,0) {11010};
    \node (10) at (0,1) {10110};
    \node (11) at (1,1) {00100};
    \node (12) at (2,1) {01101};
    \node (20) at (0,2) {11001};
    \node (21) at (1,2) {01110};
    \node (22) at (2,2) {01000};
    
    \draw[red, dashed] (00) -- (01) -- (02);
    \draw[red, dashed] (10) -- (11) -- (12);
    \draw[red, dashed] (20) -- (21) -- (22);
    \draw[red, dashed] (00) -- (10) -- (20);
    \draw[red, dashed] (01) -- (11) -- (21);
    \draw[red, dashed] (02) -- (12) -- (22);
    \draw[bend left,red, dashed] (00) to (02);
    \draw[bend left,red, dashed] (10) to (12);
    \draw[bend left,red, dashed] (20) to (22);
    \draw[bend right,red, dashed] (00) to (20);
    \draw[bend right,red, dashed] (01) to (21);
    \draw[bend right,red, dashed] (02) to (22);

    \tikzset{xshift=4cm,yshift=1cm}
    \node (0) at (90:1) {100};
    \node (1) at (162:1) {101};
    \node (2) at (234:1) {011};
    \node (3) at (306:1) {010};
    \node (4) at (18:1) {111};
    \draw[red, dashed] (0) -- (1) -- (2) -- (3) -- (4) -- (0) ;
    \end{tikzpicture}
    \caption{Valid vector assignments for $SP_9$ (left) and $SP_5$ (right) of optimal dimensions.}
    \label{SZ4}
\end{figure}

\subsection{Upper bounds for girth 7}

We conclude this section by providing our tight upper bound on the inversion diameter of planar graphs with girth at least $7$. A $t$-inversion is \emph{strict} if every vertex is associated with a non-null vector. We show the following:
\begin{theorem}
\label{thm:girth7}
    Let $G$ be a planar graph of girth at least $7$ and $\pi:E(G)\to\mathbb{F}_2$. There exists a strict $3$-inversion of $(G,\pi)$.
\end{theorem}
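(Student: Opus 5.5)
We argue by minimal counterexample with discharging, in the same spirit as the girth-$4$ argument. Let $(G,\pi)$ be a counterexample minimizing $|V(G)|+|E(G)|$. The working set is the $7$ non-null vectors of $\mathbb{F}_2^3$. The basic counting fact is this: for a non-null vector $\mathbf{u}$ and $a\in\mathbb{F}_2$, the set $\{\mathbf{x}\neq \mathbf{0} : \mathbf{u\cdot x}=a\}$ has $3$ elements if $a=0$ and $4$ elements if $a=1$. For two distinct non-null vectors $\mathbf{u_1},\mathbf{u_2}$ and prescribed values, the set of non-null solutions is an affine line of $\mathbb{F}_2^3$ minus possibly $\mathbf 0$, so it has $1$ or $2$ elements. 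If $\mathbf{u_1}=\mathbf{u_2}$, there is either no solution or one constraint's worth of solutions. From these facts we get the base reducibility: $G$ is connected, and has no $1^-$-vertex, since a pendant vertex always extends, having at least $3$ choices.

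The key reducible configurations concern threads of $2$-vertices. The plan is to show the following two properties.
\begin{enumerate}
\item $G$ has no path of three consecutive $2$-vertices, and possibly no path of two.
\item A $3$-vertex is not incident to too many threads.
\end{enumerate}
To prove these, we analyze a path $x\,u_1\cdots u_k\,y$ whose internal vertices have degree $2$. We delete the $u_i$, take a strict $3$-inversion of the rest by minimality, and try to extend along the path.

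Extending along the path is a transfer-matrix computation. Consider the bipartite ``compatibility'' relation on the $7$ non-null vectors given by $\mathbf{x\cdot y}=c$. For $c=1$ it is $4$-regular; for $c=0$ it is $3$-regular, with $\mathbf{x}$ related to itself exactly when $\mathbf{x}$ has even weight. Composing these relations along the path, we must determine the minimal $k$ for which every pair $(\mathbf{x},\mathbf{y})$ is connected, whatever the colors. I expect $k=2$ already to give full connectivity for most color patterns, and $k=3$ to give it always. This is a finite check that could also be done by computer, as in \Cref{lem:deg3choice}. This forbids long threads. For shorter threads, we count the number of end vectors reachable from a fixed $\mathbf{x}$, and use it to recolor a $3$-vertex whose neighbors are mostly ends of threads, as in \Cref{cor:deg3choice}. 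Concretely, we remove a $3$-vertex $v$ together with its incident threads, and choose $\mathbf{v}$ among the $7$ vectors so that each thread toward its far endpoint is extendable. Each thread forbids only a few values of $\mathbf{v}$, so the extension succeeds whenever the total number of forbidden values is less than $7$.

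Finally, we discharge with the girth-$7$ Euler formula. Give charge $\omega(v)=5\deg(v)-14$ to each vertex and $\omega(f)=2\deg(f)-14$ to each face. The total is $-28$, and faces are already non-negative since the girth is at least $7$. So it suffices to use vertex rules only: each $3^+$-vertex sends charge to the $2$-vertices on its incident threads. A $2$-vertex needs $4$, and a $3$-vertex has $1$ to spare. Alternatively, we can use the standard fact that a planar graph of girth $7$ has average degree $<14/5$, and derive a contradiction via the reducible configurations above, namely the absence of long threads and the bounds on threads at $3$-vertices. The main obstacle is making the reducible configurations strong enough to match the density $14/5$: short threads, including single $2$-vertices between $3$-vertices, must be controlled well enough. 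I would first compute exactly which $(\mathbf{x},\mathbf{y},\text{colors})$ fail for a single $2$-vertex and for two consecutive $2$-vertices, and then tune the discharging rule (for instance, $3$-vertices giving $1/2$ per incident thread with a $2$-vertex neighbor, threads of length $\ge 2$ forbidden) to fit.
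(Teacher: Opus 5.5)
Your setup matches the paper's. You use a minimal counterexample and the charges $5\deg(v)-14$ and $2\deg(f)-14$, with total $-28$ and faces already non-negative. Your first-level reductions also match: no $1$-vertex, and no $3$-vertex with two $2$-neighbors. Two adjacent $2$-vertices are reducible for \emph{every} color pattern, not just most: take $\mathbf{u_1}$ non-null with the right product against $\mathbf{x}$ and different from $\mathbf{y}$, and then $\mathbf{u_2}$ exists.

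The genuine gap is that the proposal stops where the real work begins, and the discharging you sketch cannot succeed. A $2$-vertex starts at $-4$. If both its neighbors are $3$-vertices, each has only $1$ to spare. So under any rule in which $3^+$-vertices only send charge to the $2$-vertices on their own threads, it receives at most $2$, and only $1$ with your $1/2$ per thread. This situation is not reducible on its own either. After deleting the $2$-vertex, extension fails exactly when its two neighbors received the same vector with mismatched colors. A $3$-vertex whose two other neighbors carry distinct vectors may have a unique admissible non-null vector, so recoloring that neighborhood does not help. Hence thread configurations with vertex-to-thread rules cannot produce the contradiction. Computing which $(\mathbf{x},\mathbf{y},\text{colors})$ fail on short threads will not change that.

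What is missing is a second layer of reducible configurations and a two-step flow of charge. The paper calls a $3$-vertex with a $2$-neighbor \emph{deficient}. Every vertex gives $2$ to each $2$-neighbor, so a deficient vertex drops to $-1$ and must be refunded by its other neighbors: each $4^+$-vertex gives $1$, and each non-deficient $3$-vertex gives $1/2$, to each deficient neighbor. Keeping the payers non-negative requires configurations on the neighborhoods of deficient vertices. These come from two counting facts:
\begin{itemize}
\item once its far ends are fixed, a deficient vertex still has two admissible vectors (Claim~\ref{cl:deg2});
\item two distinct candidate vectors on a neighbor leave at least $6$ admissible vectors for the common neighbor (Claim~\ref{cl:claim_disch}).
\end{itemize}
The resulting configurations are:
\begin{itemize}
\item a deficient vertex does not have two deficient neighbors (Lemma~\ref{lem:nondef});
\item a $3$-vertex does not have three deficient neighbors (Lemma~\ref{lem:deg3});
\item a $4$-vertex with three $2$-neighbors has no deficient neighbor (Lemma~\ref{lem:deg4}).
\end{itemize}

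The tuning is delicate even there. Under these rules, a deficient vertex whose two $3^+$-neighbors are one deficient vertex and one non-deficient $3$-vertex receives only $1/2$ and ends at $-1/2$. The paper's final count passes over this case, so adjacent deficient vertices need an additional configuration or rule.
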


The proof uses a mild discharging argument. We assume Theorem~\ref{thm:girth7} has a counter-example $(G,\pi)$, and choose one with minimum number of edges. We make use of the results from~\cite[Section 8.2]{havet} that we summarize below. 

\begin{proposition}[\cite{havet}]
\label{prop:havetdisch}
$G$ cannot contain any of the following: 
\begin{itemize}
    \item A vertex of degree $1$.
    \item Two adjacent vertices of degree $2$.
    \item A vertex of degree at most $6$ all of whose neighbors have degree $2$. 
    \item A vertex of degree $3$ with two neighbors of degree $2$.
\end{itemize}
\end{proposition}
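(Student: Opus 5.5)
We prove each item separately, by contradiction: assume the minimal counterexample $(G,\pi)$ contains the configuration, delete a small part to get $(G',\pi|_{E(G')})$, take a strict $3$-inversion of it by edge-minimality, and extend it. The whole argument rests on counting in $\mathbb{F}_2^3\setminus\{\mathbf 0\}$, which has $7$ vectors. For a non-null vector $\mathbf u$, the non-null $\mathbf x$ with $\mathbf u\cdot\mathbf x=0$ are $3$ in number, and those with $\mathbf u\cdot\mathbf x=1$ are $4$. For two distinct non-null $\mathbf u_1,\mathbf u_2$, any prescribed pair of values is met by exactly $2$ vectors of $\mathbb{F}_2^3$. When the pair is $(0,0)$ one of these is $\mathbf 0$, leaving only $1$ non-null choice; otherwise there are $2$. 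If $\mathbf u_1=\mathbf u_2$, the two constraints are compatible only if the colors agree.

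\textbf{Degree $1$.} Let $v$ be a $1$-vertex with neighbor $u$. Delete $v$ and take a strict $3$-inversion of the rest. At least $3$ non-null vectors $\mathbf x$ satisfy $\mathbf u\cdot\mathbf x=\pi(uv)$, so $v$ can be assigned one of them.

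\textbf{Two adjacent $2$-vertices.} Let $u,v$ be adjacent $2$-vertices, with other neighbors $a$ of $u$ and $b$ of $v$. Delete the edge $uv$ and take a strict $3$-inversion; then re-choose the vectors of $u$ and $v$. Given $\mathbf a$, vertex $u$ has at least $3$ options, forming a coset of $\mathbf a^\perp$ minus possibly $\mathbf 0$; likewise $v$ has at least $3$ options. We need one option for $u$ and one for $v$ with $\mathbf u\cdot\mathbf v=\pi(uv)$. A short check handles this: fix $\mathbf u$ among its at least $3$ candidates. The candidates for $v$ form a set $S$, which is either $3$ non-null vectors of a plane or $4$ vectors of an affine plane. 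The map $\mathbf x\mapsto\mathbf u\cdot\mathbf x$ is constant on $S$ only when $\mathbf u$ lies in a specific small set, and $\mathbf u$ has at least $3$ choices spread over a coset, so some choice of $\mathbf u$ makes both values attainable on $S$.

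\textbf{$6^-$-vertex with only $2$-vertex neighbors.} Let $v$ have degree $d\le 6$, with $2$-vertex neighbors $u_1,\dots,u_d$ whose other neighbors are $w_1,\dots,w_d$. Delete $v$ and the $u_i$. We need non-null $\mathbf v$ such that each $u_i$ can be extended. A vertex $u_i$ fails only if $\mathbf v=\mathbf w_i$ with colors differing, or if $\mathbf v\ne\mathbf w_i$ and the unique solution of its two constraints is $\mathbf 0$, which happens exactly when both required values are $0$. So each $u_i$ forbids at most one vector for $v$, namely $\mathbf w_i$ (this holds in both failure cases). Hence at most $6$ of the $7$ non-null vectors are forbidden and $\mathbf v$ exists.

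\textbf{$3$-vertex with two $2$-vertex neighbors.} Let $v$ have neighbors $x,u_1,u_2$, with $u_1,u_2$ of degree $2$ and other neighbors $w_1,w_2$. Delete $u_1,u_2$; the vertex $v$ then has degree $1$ and can be re-chosen among at least $3$ vectors satisfying its constraint with $\mathbf x$. By the previous paragraph, each $u_i$ forbids at most the single value $\mathbf w_i$ for $\mathbf v$, so together they forbid at most $2$ values. With at least $3$ candidates, one remains.

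The main obstacle is the adjacent $2$-vertices case. Its small linear-algebra check must be done carefully, especially in the degenerate situations where $\mathbf a=\mathbf b$ or the candidate sets coincide.
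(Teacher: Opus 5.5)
Your strategy (delete the configuration, use edge-minimality, extend) matches the source the paper cites. The key fact is that a $2$-vertex whose two neighbours carry distinct non-null vectors always extends. This is the argument from \cite[Lemma 8.13]{havet}, recalled in the proof of Claim~\ref{cl:deg2}. Items 1 and 4 are fine. However, the justification in item 3 contains a false step. You posit a second failure mode: ``$\mathbf v\ne\mathbf w_i$ and the unique solution of its two constraints is $\mathbf 0$, which happens exactly when both required values are $0$.'' This contradicts your own opening count. For distinct non-null $\mathbf v,\mathbf w_i$, the system $\mathbf v\cdot\mathbf u_i=\pi(vu_i)$, $\mathbf w_i\cdot\mathbf u_i=\pi(u_iw_i)$ has exactly two solutions. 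For the values $(0,0)$ these are $\mathbf 0$ and a non-null vector, so $u_i$ always extends. Moreover, if that failure mode were real, it would not forbid only $\mathbf w_i$. When $\pi(vu_i)=\pi(u_iw_i)=0$ it would forbid every $\mathbf v\neq\mathbf w_i$. So your parenthetical ``this holds in both failure cases'' is inconsistent, and read literally the count collapses. The correct statement is that $u_i$ fails only when $\mathbf v=\mathbf w_i$ and $\pi(vu_i)\neq\pi(u_iw_i)$. With this fix, items 3 and 4 go through as you wrote them: at most $d\le 6$ of the $7$ non-null vectors are forbidden in item 3, and at most $2$ of at least $3$ candidates in item 4.

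For item 2, the step you call the main obstacle is immediate and has no degenerate cases. The ``specific small set'' of $\mathbf u$ for which $\mathbf x\mapsto\mathbf u\cdot\mathbf x$ is constant on $S$ is exactly $\{\mathbf 0,\mathbf b\}$. So choose $\mathbf u\neq\mathbf b$ among its at least $3$ candidates. Then $\mathbf u$ and $\mathbf b$ are linearly independent, and the two constraints on $\mathbf v$ have a non-null solution by the same two-constraint count. In other words, the $2$-vertex $v$ forbids only the value $\mathbf b$ for $\mathbf u$, exactly as in items 3 and 4. Whether $\mathbf a=\mathbf b$ plays no role, and you should state the argument this explicitly rather than as a sketched ``short check''.
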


A \emph{deficient} vertex is a vertex of degree $3$ with a (unique) neighbor of degree $2$. We add three new configurations, and show that $G$ cannot contain them in the three upcoming lemmas. All of them rely on the following claims, which can be checked easily by a dimension argument or by exhaustive search.

\begin{claim}
\label{cl:claim_disch}
    Given two distinct non-zero vectors $\mathbf{x},\mathbf{y}\in\mathbb{F}_2^3$, and any $b\in\{0,1\}$, there are at least $6$ vectors $\mathbf{z}$ satisfying $\mathbf{x}\cdot \mathbf{z} =b$ or $\mathbf{y}\cdot \mathbf{z}=b$.
\end{claim}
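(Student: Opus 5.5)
The claim concerns the set of vectors $\mathbf{z}\in\mathbb{F}_2^3$ with $\mathbf{x}\cdot\mathbf{z}=b$ or $\mathbf{y}\cdot\mathbf{z}=b$. I would bound its size from below by inclusion–exclusion, using linear functionals on $\mathbb{F}_2^3$.

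First, since $\mathbf{x}\neq \mathbf{0}$, the map $\mathbf{z}\mapsto \mathbf{x}\cdot\mathbf{z}$ is a nonzero linear functional on $\mathbb{F}_2^3$. Hence its fibres both have size exactly $4$: the kernel is a $2$-dimensional subspace, and the other fibre is a coset of it. So $S_x:=\{\mathbf{z} : \mathbf{x}\cdot\mathbf{z}=b\}$ has $|S_x|=4$, and likewise $S_y$ defined from $\mathbf{y}$ has $|S_y|=4$.

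Second, I would compute $S_x\cap S_y$, the solution set of the system $\mathbf{x}\cdot\mathbf{z}=b$, $\mathbf{y}\cdot\mathbf{z}=b$. Over $\mathbb{F}_2$, two distinct nonzero vectors are linearly independent, because the only nonzero scalar is $1$. Therefore the $2\times 3$ matrix with rows $\mathbf{x},\mathbf{y}$ has rank $2$, and the system is consistent with solution set an affine line of size $2^{3-2}=2$. Inclusion–exclusion then gives
\[
|S_x\cup S_y| = 4+4-2 = 6,
\]
which proves the claim (in fact with equality).

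The only step that uses the hypotheses is the independence of $\mathbf{x}$ and $\mathbf{y}$. It is needed to get consistency of the system and an intersection of size $2$ rather than $4$. I expect no real obstacle here. As a sanity check, the claim could also be verified by exhaustive search over the $7\cdot 6\cdot 2$ choices of $(\mathbf{x},\mathbf{y},b)$, as the paper suggests.
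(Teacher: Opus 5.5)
Your proof is correct and is precisely the ``dimension argument'' the paper mentions without spelling out (its only other suggestion is exhaustive search). Each solution set $\{\mathbf{z} : \mathbf{x}\cdot\mathbf{z}=b\}$ and $\{\mathbf{z} : \mathbf{y}\cdot\mathbf{z}=b\}$ has $4$ elements, their intersection has $2$ because distinct nonzero vectors over $\mathbb{F}_2$ are linearly independent, and inclusion--exclusion gives exactly $6$.
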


\begin{claim}
\label{cl:deg2}
Given two non-zero vectors $\mathbf{x},\mathbf{y}\in\mathbb{F}_2^3$ and $a,b,c\in\{0,1\}$, there are two vectors $\mathbf{z}\in\mathbb{F}_2^3\setminus\{\mathbf{0}\}$ such that $\mathbf{z}\cdot \mathbf{x} = a$ and $\mathbf{z}$ admits a vector $\mathbf{z'}\in\mathbb{F}_2^3\setminus\{\mathbf{0}\}$ with $\mathbf{z'}\cdot \mathbf{z}=b$ and $\mathbf{z'}\cdot \mathbf{y}=c$.
\end{claim}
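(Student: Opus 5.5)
The plan is a direct count in $\mathbb{F}_2^3$: I will show that every admissible choice of $\mathbf{z}$, except possibly $\mathbf{z}=\mathbf{y}$, has a valid partner $\mathbf{z'}$. Then it suffices to check that at least three admissible $\mathbf{z}$ exist.

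\emph{Step 1: counting candidates.} Since $\mathbf{x}\neq\mathbf{0}$, the map $\mathbf{z}\mapsto\mathbf{z}\cdot\mathbf{x}$ is a nonzero linear form on $\mathbb{F}_2^3$. Each of its fibres therefore has exactly $4$ elements.
\begin{itemize}
\item If $a=1$, the fibre does not contain $\mathbf{0}$, so there are $4$ nonzero vectors $\mathbf{z}$ with $\mathbf{z}\cdot\mathbf{x}=a$.
\item If $a=0$, the fibre is a $2$-dimensional subspace, so there are $3$ nonzero vectors $\mathbf{z}$ with $\mathbf{z}\cdot\mathbf{x}=a$.
\end{itemize}
In both cases at least $3$ candidates remain.

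\emph{Step 2: existence of $\mathbf{z'}$.} Fix a nonzero candidate $\mathbf{z}$ and consider the linear system $\mathbf{z'}\cdot\mathbf{z}=b$, $\mathbf{z'}\cdot\mathbf{y}=c$.
\begin{itemize}
\item Suppose $\mathbf{z}\neq\mathbf{y}$. Two distinct nonzero vectors of $\mathbb{F}_2^3$ are linearly independent, so the system has rank $2$. Its solution set is an affine line with exactly $2$ elements.
\begin{itemize}
\item If $(b,c)\neq(0,0)$, then $\mathbf{0}$ is not a solution, so both solutions are nonzero.
\item If $b=c=0$, the solution set is a $1$-dimensional subspace $\{\mathbf{0},\mathbf{w}\}$ with $\mathbf{w}\neq\mathbf{0}$, and we take $\mathbf{z'}=\mathbf{w}$.
\end{itemize}
In either case a nonzero $\mathbf{z'}$ exists.
\item Suppose $\mathbf{z}=\mathbf{y}$. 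The system is solvable only if $b=c$. In that case it reduces to $\mathbf{z'}\cdot\mathbf{y}=b$, which has at least $3$ nonzero solutions. So $\mathbf{z}=\mathbf{y}$ fails exactly when $b\neq c$.
\end{itemize}

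\emph{Conclusion.} At most one candidate from Step 1, namely $\mathbf{y}$ itself, can fail. This leaves at least $3-1=2$ valid vectors $\mathbf{z}$, as required.

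The only point needing care is the degenerate case $b=c=0$. There we must check that the solution line of the homogeneous system still contains a nonzero vector, which holds because that line is a $1$-dimensional subspace. As the paper notes, the claim can alternatively be confirmed by exhaustive search over the $7\times7\times8$ cases.
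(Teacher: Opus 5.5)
Your proof is correct and follows essentially the same route as the paper. The fibre $\{\mathbf{z} : \mathbf{z}\cdot\mathbf{x}=a\}$ has four elements, and discarding $\mathbf{0}$ and $\mathbf{y}$ leaves at least two; for each remaining $\mathbf{z}$, the linear independence of $\mathbf{z}$ and $\mathbf{y}$ makes the solution set of the two equations a $1$-dimensional affine space, which contains a nonzero $\mathbf{z'}$. Your separate treatment of $\mathbf{z}=\mathbf{y}$ is harmless but not needed to reach the count of two.
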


\begin{proof}
    Observe that there are four possible vectors $\mathbf{z}$ such that $\mathbf{z}\cdot \mathbf{x}=a$, and at least two of them must be different from $\mathbf{0}$ and $\mathbf{y}$. We then find $\mathbf{z'}$ using the argument from Lemma 8.13 in~\cite{havet} that we recall hereafter.

    Since $\mathbf{y}$ and $\mathbf{z}$ are distinct and non-zero, they are linearly independent, hence the set of solutions of the affine equations $\mathbf{z'}\cdot \mathbf{z} = b$ and $\mathbf{z'}\cdot \mathbf{y}=c$ is a non-empty affine space of dimension $1$, and therefore contains a non-zero vector.
\end{proof}

We may now show that $G$ cannot contain some configurations.
\begin{lemma}
\label{lem:nondef}
    Every deficient vertex of $G$ has a non-deficient neighbor of degree at least $3$.
\end{lemma}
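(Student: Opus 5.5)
We argue by contradiction, using the minimality of $(G,\pi)$ (fewest edges) together with \Cref{prop:havetdisch}. Let $v$ be a deficient vertex, with neighbors $u,w_1,w_2$, where $u$ is its unique $2$-neighbor. Let $x$ be the other neighbor of $u$. By \Cref{prop:havetdisch}, $w_1$ and $w_2$ are not of degree $1$, and they are not of degree $2$ because $u$ is the unique $2$-neighbor of $v$. So both have degree at least $3$. Suppose for contradiction that both are deficient. Then each $w_i$ has degree exactly $3$ and a unique $2$-neighbor $u_i$, and $u_i\neq v$ since $\deg v=3$. Let $y_i$ be the other neighbor of $u_i$, and let $s_i$ be the third neighbor of $w_i$. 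Since $G$ has girth at least $7$, the vertices $v,u,x,w_1,w_2,u_1,u_2,y_1,y_2,s_1,s_2$ lie on short paths around $v$, so they are pairwise distinct and no extra edges join them. This is what makes the local reassignment below well defined.

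Let $G'=G-\{v,u,u_1,u_2\}$. It has fewer edges, so $(G',\pi|_{E(G')})$ admits a strict $3$-inversion by minimality. In $G'$ each $w_i$ is a leaf attached to $s_i$, so we discard the vectors on $w_1,w_2$ and reassign all of $w_1,w_2,v,u,u_1,u_2$. The available tools are:
\begin{itemize}
\item the affine-line argument from the proof of \Cref{cl:deg2}: a $2$-vertex whose two neighbors carry distinct non-zero vectors can always be given a non-zero vector;
\item the counting of \Cref{cl:claim_disch}.
\end{itemize}

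The order of extension is as follows.
\begin{enumerate}
\item For $w_i$, the constraint $\mathbf{w_i}\cdot\mathbf{s_i}=\pi(w_is_i)$ has $4$ solutions, at least $3$ of them non-zero. Removing $\mathbf{y_i}$ leaves at least $2$ candidates. For such a choice, $u_i$ can be extended at the very end, since $\mathbf{w_i}\neq\mathbf{y_i}$ and both are non-zero.
\item We choose $\mathbf{w_1}\neq\mathbf{w_2}$ among these candidates. Then the system $\mathbf{z}\cdot\mathbf{w_1}=\pi(vw_1)$, $\mathbf{z}\cdot\mathbf{w_2}=\pi(vw_2)$ has exactly two solutions $\mathbf{z}$, which are the candidates for $\mathbf{v}$.
\item We need one of these solutions to be non-zero and different from $\mathbf{x}$. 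Then $u$ extends by the same affine argument, with neighbors $v$ and $x$.
\end{enumerate}

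The main obstacle is the degenerate case of step 3, where the two solutions are exactly $\mathbf{0}$ and $\mathbf{x}$. That happens only when $\pi(vw_1)=\pi(vw_2)=0$ and $\mathbf{x}\cdot\mathbf{w_1}=\mathbf{x}\cdot\mathbf{w_2}=0$. In that case $\mathbf{w_1},\mathbf{w_2}$ are the two non-zero vectors of $\mathbf{x}^\perp$, so the pair is forced. The plan is to escape this configuration through the remaining freedom in step 1. Each $w_i$ has at least two admissible vectors, which gives at least four pairs. If any admissible pair with $\mathbf{w_1}\neq\mathbf{w_2}$ avoids $\{\mathbf{w_1},\mathbf{w_2}\}=\mathbf{x}^\perp\setminus\{\mathbf{0}\}$, we are done. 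If no such pair exists, the candidate sets are very small and all lie in $\mathbf{x}^\perp$, and we instead use that $u$ is a $2$-vertex: we reselect $\mathbf{v}$ and $\mathbf{u}$ jointly via \Cref{cl:deg2}, with $\mathbf{x}$ and one $\mathbf{w_i}$ as the fixed vectors, and recheck the constraint at the other $w_j$ using the leftover choice for $\mathbf{w_j}$. I expect this final finite case check to be fiddly but routine; it can also be confirmed by exhaustive search over $\mathbb{F}_2^3$. Every case yields a strict $3$-inversion of $(G,\pi)$, contradicting the choice of $G$.
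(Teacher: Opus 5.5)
Your reduction and the non-degenerate case are fine. The degenerate case of step 3, which you rightly flag as the crux, is not actually handled, and the fallback you sketch cannot work as you set it up.

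\textbf{The degenerate case is mis-described.} $\mathbf{x}^\perp$ is a $2$-dimensional subspace of $\mathbb{F}_2^3$ and has \emph{three} non-zero vectors. So degeneracy means $\pi(vw_1)=\pi(vw_2)=0$ and $\mathbf{w_1},\mathbf{w_2}$ are two distinct vectors of $\mathbf{x}^\perp\setminus\{\mathbf{0}\}$. The pair is not forced.

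\textbf{Your fallback must end at $\mathbf{w_1}=\mathbf{w_2}$, which step 2 excludes.} Suppose all candidates for both $\mathbf{w_i}$ lie in $\mathbf{x}^\perp$. If $\mathbf{v}\notin\{\mathbf{0},\mathbf{x}\}$ and $\mathbf{v}\cdot\mathbf{w_1}=0$, then $\mathbf{x}^\perp\cap\mathbf{v}^\perp$ is one-dimensional and contains $\mathbf{w_1}$. Hence the only admissible $\mathbf{w_2}$ is $\mathbf{w_1}$ itself. So reselecting $\mathbf{v},\mathbf{u}$ via \Cref{cl:deg2} and ``rechecking'' with a leftover $\mathbf{w_j}$ can succeed only if you allow $\mathbf{w_j}=\mathbf{w_i}$. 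You would also need to know that this vector is admissible for both sides. Your sketch neither states nor proves either point.

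\textbf{The case is not vacuous.} Take $\mathbf{s_1}=\mathbf{s_2}=\mathbf{x}$, $\pi(w_1s_1)=\pi(w_2s_2)=\pi(vw_1)=\pi(vw_2)=0$ and $\pi(uv)\neq\pi(ux)$. Nothing prevents the inversion of $G'$ from looking like this. Deferring this case to an unspecified ``routine'' check or exhaustive search leaves the one nontrivial step of the proof undone.

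\textbf{The missing idea.} $w_1$ and $w_2$ are non-adjacent, so $\mathbf{w_1}=\mathbf{w_2}$ is allowed. In the degenerate case, the two candidate sets are subsets of size at least $2$ of the $3$-element set $\mathbf{x}^\perp\setminus\{\mathbf{0}\}$, so they share some $\mathbf{w}$. Setting $\mathbf{w_1}=\mathbf{w_2}=\mathbf{w}$ leaves the single constraint $\mathbf{v}\cdot\mathbf{w}=0$. Now $\mathbf{w}^\perp$ has three non-zero vectors, one of which is $\mathbf{x}$, leaving two valid choices for $\mathbf{v}$.

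\textbf{The paper's route avoids the case split.} Instead of fixing a pair first, let $U_i$ be the set of vectors for $v$ compatible with at least one candidate for $\mathbf{w_i}$. Applying \Cref{cl:claim_disch} to two distinct candidates gives $|U_i|\geq 6$; this is the tool you listed but never used. Hence $|U_1\cap U_2|\geq 4$, and any $\mathbf{v}\in U_1\cap U_2\setminus\{\mathbf{0},\mathbf{x}\}$ works. Afterwards $w_1,w_2$ and the three $2$-vertices are filled in. This counting silently permits coinciding choices for $\mathbf{w_1},\mathbf{w_2}$, which is exactly what your degenerate case needs.

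(Minor point: girth $7$ does allow an edge $y_1y_2$, closing a $7$-cycle through $v$, so ``no extra edges join them'' is slightly too strong. This is harmless, since both endpoints keep their vectors from $G'$.)
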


\begin{proof}
    Assume otherwise, and let $v$ be a deficient vertex with two deficient neighbors $u,w$ in $G$. Denote by $u',v',w'$ the neighbors of degree $2$ of $u,v,w$, and by $a,b,c,d,e$ the remaining vertices as shown in Figure~\ref{fig:nondef}. Note that the girth assumption ensures that these vertices are all distinct and that all edges between $\{u,v,w,u',v',w'\}$ are drawn in the figure.

    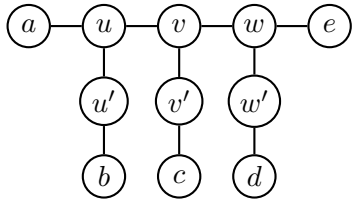
\begin{figure}[!ht]
    \centering
        \begin{tikzpicture}[thick, every node/.style={draw,circle,inner sep=2pt,minimum size=16pt}]
        \node (a) at (0,0) {$a$};
        \node (b) at (1,-2) {$b$};
        \node (c) at (2,-2) {$c$};
        \node (d) at (3,-2) {$d$};
        \node (e) at (4,0) {$e$};
        \node (u) at (1,0) {$u$};
        \node (v) at (2,0) {$v$};
        \node (w) at (3,0) {$w$};
        \node (u') at (1,-1) {$u'$};
        \node (v') at (2,-1) {$v'$};
        \node (w') at (3,-1) {$w'$};
        \draw (a) -- (u) -- (v) -- (w) -- (e);
        \draw (b) -- (u') -- (u);
        \draw (c) -- (v') -- (v);
        \draw (d) -- (w') -- (w);
        \end{tikzpicture}
    \caption{The configuration of Lemma~\ref{lem:nondef}}
    \label{fig:nondef}
    \end{figure}

    Since $(G,\pi)$ is a minimal counter-example, there exists a strict $3$-inversion of $(G-\{u,v,w,u',v',w'\},\pi)$ that we denote in bold. Let $\mathbf{u_1},\mathbf{u_2}$ the choices for $\mathbf{u}$ provided by Claim~\ref{cl:deg2} (note that both these choices can be extended into $\mathbf{u'}$ in a way that is compatible with the vectors assigned on $G-\{u,v,w,u',v',w'\}$). We similarly get $\mathbf{w_1},\mathbf{w_2}$ for $\mathbf w$.
    
    Let $U$ (resp. $W$) the set of vectors for $v$ that are compatible with either $\mathbf{u_1}$ or $\mathbf{u_2}$ (resp. $\mathbf{w_1}$ or $\mathbf{w_2}$). By Claim~\ref{cl:claim_disch}, $|U|\geq 6$ and $|W|\geq 6$, hence $|U\cap W|\geq 4$. Now define $\mathbf{v_1}\in U\cap W\setminus\{\mathbf{0},\mathbf{c}\}$. By construction, one can assign $\mathbf{u_1}$ or $\mathbf{u_2}$ to $u$, $\mathbf{w_1}$ or $\mathbf{w_2}$ to $w$, $\mathbf{v_1}$ to $v$ and find vectors for $u',v',w'$ by Claim~\ref{cl:deg2} to get a strict $3$-inversion for $(G,\pi)$, a contradiction.
\end{proof}

\begin{lemma}
    \label{lem:deg3}
    $G$ cannot contain a vertex of degree $3$ with three deficient neighbors.
\end{lemma}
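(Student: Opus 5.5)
We argue as in Lemma~\ref{lem:nondef}, by minimality of the counterexample $(G,\pi)$. Let $v$ be a vertex of degree $3$ whose neighbors $u_1,u_2,u_3$ are all deficient. For each $i$, let $u_i'$ be the degree-$2$ neighbor of $u_i$, let $b_i$ be the other neighbor of $u_i'$, and let $a_i$ be the remaining neighbor of $u_i$. The girth-$7$ assumption makes all these vertices distinct, except possibly for coincidences among the outer vertices $a_i,b_i$; these are harmless, since outer vertices are only read off, never reassigned. It also ensures that the only edges among $S:=\{v,u_1,u_2,u_3,u_1',u_2',u_3'\}$ are the drawn ones. By minimality, $(G-S,\pi)$ admits a strict $3$-inversion, written in bold.

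\textbf{Step 1.} For each $i$, apply Claim~\ref{cl:deg2} with $\mathbf{x}=\mathbf{a_i}$, $\mathbf{y}=\mathbf{b_i}$ and the colors $a=\pi(u_ia_i)$, $b=\pi(u_iu_i')$, $c=\pi(u_i'b_i)$. This yields two non-zero candidates $\mathbf{u_{i,1}},\mathbf{u_{i,2}}$ for $\mathbf{u_i}$. Each candidate extends to a non-zero $\mathbf{u_i'}$ compatible with $\mathbf{b_i}$. The proof of Claim~\ref{cl:deg2} also shows the two candidates are distinct, non-zero, and different from $\mathbf{b_i}$.

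\textbf{Step 2.} Let $U_i$ be the set of vectors $\mathbf{z}\in\mathbb{F}_2^3$ with $\mathbf{z}\cdot\mathbf{u_{i,1}}=\pi(vu_i)$ or $\mathbf{z}\cdot\mathbf{u_{i,2}}=\pi(vu_i)$. By Claim~\ref{cl:claim_disch}, $|U_i|\ge 6$, so each $U_i$ misses at most $2$ of the $8$ vectors. Hence $|U_1\cap U_2\cap U_3|\ge 8-6=2$.

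\textbf{Step 3 (main obstacle).} We need a \emph{non-zero} vector in this intersection. If the intersection contains two vectors, at least one is non-zero, so this suffices; the real difficulty is only to make sure the count in Step 2 is not lossy. I expect this to work as stated. If some edge case breaks it, the fallback is a finer count: the complement of $U_i$ is exactly the set $\{\mathbf{z}:\mathbf{z}\cdot\mathbf{u_{i,1}}\ne\pi(vu_i),\ \mathbf{z}\cdot\mathbf{u_{i,2}}\ne\pi(vu_i)\}$. This is an affine subspace of dimension $1$, i.e.\ exactly $2$ vectors. Moreover, it contains $\mathbf 0$ only when $\pi(vu_i)=1$, and in that case its other element is $\mathbf{u_{i,1}}\times\mathbf{u_{i,2}}$-like, i.e.\ the unique non-zero vector orthogonal to both candidates. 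So the union of the three complements covers at most $6$ vectors and, when it contains $\mathbf 0$, covers at most $6$ vectors counting $\mathbf 0$ itself. Therefore at least $8-6=2$ vectors remain, and if $\mathbf{0}$ is excluded the survivors are non-zero anyway; in all cases some non-zero $\mathbf{v}$ survives.

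\textbf{Step 4.} Fix such a non-zero $\mathbf{v}$. For each $i$, choose $j_i$ with $\mathbf{v}\cdot\mathbf{u_{i,j_i}}=\pi(vu_i)$, set $\mathbf{u_i}=\mathbf{u_{i,j_i}}$, and complete $\mathbf{u_i'}$ via Claim~\ref{cl:deg2}. Every edge of $G$ touching $S$ is then satisfied and all vectors are non-zero. This gives a strict $3$-inversion of $(G,\pi)$, contradicting minimality.
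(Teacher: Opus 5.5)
Your proof is correct and follows the paper's argument step for step: delete the seven vertices, use Claim~\ref{cl:deg2} to get two admissible vectors for each deficient neighbor, use Claim~\ref{cl:claim_disch} to get $|U_i|\ge 6$, and pick a non-zero vector in $U_1\cap U_2\cap U_3$, which has at least two elements. The fallback in your Step 3 is unnecessary, because two distinct vectors always include a non-zero one, so Step 2 already settles the choice with no edge case left.
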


\begin{proof}
    Assume otherwise, and that $G$ contains the structure depicted in Figure~\ref{fig:deg3}. Note that the girth assumption ensures that these vertices are all distinct and that all edges of $G$ among them are drawn. Let $H$ be obtained from $G$ by removing $u,u_1,u_2,u_3,v_1,v_2,v_3$. By minimality, there exists a strict $3$-inversion of $(H,\pi|_{E(H)})$, denoted in bold. 
    \begin{figure}[!ht]
        \centering
        \begin{tikzpicture}[thick, every node/.style={draw,circle,inner sep=2pt,minimum size=16pt}]
        \node (u) at (0,0) {$u$};
        \node (u1) at (-1,-1) {$u_1$};
        \node (u2) at (0,-1) {$u_2$};
        \node (u3) at (1,-1) {$u_3$};
        \node (v1) at (-1,-2) {$v_1$};
        \node (v2) at (0,-2) {$v_2$};
        \node (v3) at (1,-2) {$v_3$};
        \draw (1,-2.5) -- (v3) -- (u3) -- (u) -- (u2) -- (v2) -- (0,-2.5);
        \draw (-1,-2.5) -- (v1) -- (u1) -- (u); 
        \draw (-1.5,-1.5) -- (u1);
        \draw (-.5,-1.5) -- (u2);
        \draw (.5,-1.5) -- (u3);
        \end{tikzpicture}
        \caption{The configuration of Lemma~\ref{lem:deg3}}
        \label{fig:deg3}
    \end{figure}
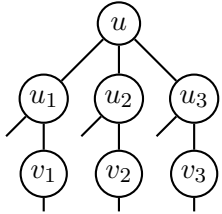

    For $i\in\{1,2,3\}$, we may apply Claim~\ref{cl:deg2} to find two choices for $\mathbf{u_i}$ that can be extended to $\mathbf{v_i}$, in a way that is compatible with the vectors in $H$. Let $U_i$ be the set of candidate vectors for $\mathbf{u}$ that are compatible with at least one choice of $\mathbf{u_i}$.

    By Claim~\ref{cl:claim_disch}, $|U_i|\geq 6$. There are $8$ vectors in $\mathbb{F}_2^3$, $U_1\cap U_2\cap U_3$ that have size at least $2$ and we can choose $\mathbf{u}$ to be a non-zero vector in this set. Therefore by definition of $U_i$ and Claim~\ref{cl:deg2}, we can assign $\mathbf{u}$ to $u$ and extend this into a strict $3$-inversion of $(G,\pi)$, a contradiction.
\end{proof}

\begin{lemma}
\label{lem:deg4}
    $G$ cannot contain a vertex of degree $4$ with $3$ neighbors of degree $2$ and a deficient neighbor.
\end{lemma}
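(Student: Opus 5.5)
Let $v$ be a vertex of degree $4$ with neighbors $x_1,x_2,x_3$ of degree $2$ and a deficient neighbor $u$. Let $y_i$ be the other neighbor of $x_i$. Let $u'$ be the degree-$2$ neighbor of $u$, $b$ the other neighbor of $u'$, and $a$ the third neighbor of $u$. By the girth assumption all these vertices are distinct, and the only edges among $\{v,u,u',x_1,x_2,x_3\}$ are the drawn ones. Let $H=G-\{v,u,u',x_1,x_2,x_3\}$. By minimality, $(H,\pi|_{E(H)})$ has a strict $3$-inversion, which we denote in bold, and we extend it as in the proofs of Lemmas~\ref{lem:nondef} and~\ref{lem:deg3}.

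First I handle the $x_i$. Once $\mathbf v$ is fixed and non-zero, a non-zero $\mathbf{x_i}$ with $\mathbf{x_i}\cdot\mathbf v=\pi(vx_i)$ and $\mathbf{x_i}\cdot\mathbf{y_i}=\pi(x_iy_i)$ exists whenever $\mathbf v\neq\mathbf{y_i}$. Indeed, $\mathbf v$ and $\mathbf{y_i}$ are then linearly independent, so the solution set is an affine line, which contains a non-zero vector; this is the argument of Claim~\ref{cl:deg2}. So the $x_i$ only forbid $\mathbf v\in\{\mathbf{y_1},\mathbf{y_2},\mathbf{y_3}\}$, and together with strictness at most $4$ of the $8$ vectors are excluded for $\mathbf v$.

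Next I handle $u$ and $u'$. By Claim~\ref{cl:deg2} applied with $\mathbf x=\mathbf a$, $\mathbf y=\mathbf b$, there are two non-zero choices $\mathbf{u_1},\mathbf{u_2}$ for $\mathbf u$ that are compatible with $a$ and extend to $u'$. Let $U$ be the set of $\mathbf v$ such that $\mathbf v\cdot\mathbf{u_j}=\pi(uv)$ for some $j$. By Claim~\ref{cl:claim_disch}, $|U|\geq 6$.

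It then suffices to find $\mathbf v\in U\setminus\{\mathbf 0,\mathbf{y_1},\mathbf{y_2},\mathbf{y_3}\}$. The naive count gives only $6-4=2>0$ if we are pessimistic, which is still enough, so this should go through. The step to double-check is the count when $\mathbf 0\in U$. In that case $\mathbf 0$ is among the excluded vectors, so at most $4$ elements of $U$ are removed and at least $2$ remain. If $\mathbf 0\notin U$, then removing $\mathbf{y_1},\mathbf{y_2},\mathbf{y_3}$ leaves at least $3$ vectors.

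With such a $\mathbf v$ chosen, pick the matching $\mathbf{u_j}$, complete $u'$ by Claim~\ref{cl:deg2}, and complete each $x_i$ as above. This gives a strict $3$-inversion of $(G,\pi)$, contradicting minimality. The only real obstacle is checking that Claim~\ref{cl:deg2} guarantees $\mathbf{u_j}\neq\mathbf 0$ (it does), and that the completion of $x_i$ does not need $\mathbf v\neq\mathbf{y_i}$ when $\pi$ values happen to agree. To keep the argument uniform, I simply exclude $\mathbf v=\mathbf{y_i}$ in all cases.
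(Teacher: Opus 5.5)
Your proof is correct and is essentially the paper's argument with the letters $u$ and $v$ swapped. You remove the six vertices, use Claim~\ref{cl:deg2} to get two admissible vectors for the deficient vertex, use Claim~\ref{cl:claim_disch} to get at least six candidates for the degree-$4$ vertex, and discard $\mathbf{0}$ and the vectors of the three far endpoints of its degree-$2$ neighbours. The case split on whether $\mathbf 0\in U$ is unnecessary, since $6-4=2>0$ holds either way.
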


\begin{proof}
Assume otherwise, and that $G$ contains the structure depicted on Figure~\ref{fig:deg4}. Let $H$ be obtained from $G$ be removing $u,v,u_1,u_2,u_3,v_1$. By minimality, there exists a strict $3$-inversion of $(H,\pi|_{E(H)})$, denoted in bold. 
    \begin{figure}[!ht]
        \centering
        \begin{tikzpicture}[thick, every node/.style={draw,circle,inner sep=2pt,minimum size=16pt}]
            \node (u) at (0,0) {$u$};
            \node (u1) at (-1,0) {$u_1$};
            \node (a) at (-2,0) {$a$};
            \node (u2) at (0,1) {$u_2$};
            \node (b) at (0,2) {$b$};
            \node (u3) at (1,0) {$u_3$};
            \node (c) at (2,0) {$c$};
            \node (v) at (0,-1) {$v$};
            \node (e) at (-.5,-2) {$e$};
            \node (v1) at (.5,-2) {$v_1$};
            \node (d) at (1.5,-2) {$d$};
            \draw (d) -- (v1) -- (v) -- (u) -- (u3) -- (c);
            \draw (b) -- (u2) -- (u) -- (u1) -- (a);
            \draw (e) -- (v);
        \end{tikzpicture}
        \caption{The configuration of Lemma~\ref{lem:deg4}}
        \label{fig:deg4}
    \end{figure}
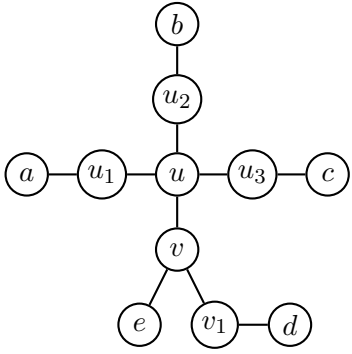

By Claim~\ref{cl:deg2}, there are two possible choices of $\mathbf{v}$ that can be extended into a choice of $\mathbf{v_1}$ which is compatible with $(H,\pi)$. By Claim~\ref{cl:claim_disch}, there are thus six choices for $\mathbf{u}$. Avoiding $\mathbf{0}$, $\mathbf{a}$, $\mathbf{b}$ and $\mathbf{c}$ yields a choice for $\mathbf{u}$, which may then be extended into a strict $3$-inversion of $(G,\pi)$, a contradiction.
\end{proof}

We now reach a contradiction using the following discharging argument. Fix a planar embedding of $G$. Assign to each vertex $v$ of $G$ a weight $\omega(v)=5\deg(v)-14$, and to each face $f$ of $G$ a weight $\omega(f)=2\deg(f)-14$. Observe that the total weight $\sum_v \omega(v)+\sum_f\omega(f)$ is $-28$ by Euler's formula. We now move the weights as follows: 
\begin{itemize}
\item Each vertex gives $2$ to each of its neighbors of degree $2$.
\item Each vertex of degree at least $4$ gives $1$ to each of its deficient neighbors. 
\item Each non-deficient vertex of degree $3$ gives $1/2$ to each of its deficient neighbors. 
\end{itemize}

We claim that after this process, every vertex and face of $G$ has non-negative weight.
This will yield a contradiction since the total initial weight was negative, thus concluding the proof of Theorem~\ref{thm:girth7}.

The case of faces and vertices of degree $2$ is straightforward. Vertices of degree $d\geqslant 5$ give a weight of at most $2d$, hence their final weight is at least $3d-14\geqslant 1$.

\begin{lemma}
 Vertices of degree $4$ have non-negative final weight.
\end{lemma}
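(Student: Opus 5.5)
A vertex $v$ of degree $4$ starts with weight $5\cdot 4-14=6$. Under the rules it loses $2$ for each neighbor of degree $2$ and $1$ for each deficient neighbor, and nothing else. Writing $n_2$ for the number of degree-$2$ neighbors and $n_d$ for the number of deficient neighbors, with $n_2+n_d\leq 4$, the final weight is $6-2n_2-n_d$. The plan is a case analysis on $n_2$, using \Cref{prop:havetdisch} and \Cref{lem:deg4} to exclude the bad cases.

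First, $n_2\leq 3$: by \Cref{prop:havetdisch}, a vertex of degree at most $6$ cannot have all its neighbors of degree $2$. The cases are then:
\begin{itemize}
\item If $n_2\leq 1$, then $n_d\leq 4-n_2$, so the final weight is at least $6-2n_2-(4-n_2)=2-n_2\geq 1$.
\item If $n_2=2$, then $n_d\leq 2$, so the final weight is at least $6-4-2=0$.
\item If $n_2=3$, the remaining neighbor cannot be deficient by \Cref{lem:deg4}, so $n_d=0$ and the final weight is $6-6=0$.
\end{itemize}

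Vertices of degree $4$ receive no weight under the rules, since only degree-$2$ and deficient vertices receive. The argument therefore needs no lower bound on received weight, only the upper bound on what is given. The only delicate point is the $n_2=3$ case, which is exactly why \Cref{lem:deg4} was proved. I should also check that a deficient neighbor is never counted as a degree-$2$ neighbor; this holds because deficient vertices have degree $3$, so the two counts are disjoint.
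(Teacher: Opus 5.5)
Your proof is correct and follows the paper's own argument. You use \Cref{prop:havetdisch} to rule out four degree-$2$ neighbors and \Cref{lem:deg4} to handle exactly three, and in the remaining cases you bound the loss by $2n_2+(4-n_2)\le 6$. Your finer split of the $n_2\le 2$ case, and your remark that degree-$4$ vertices receive nothing, just make explicit what the paper's one-line count leaves implicit.
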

\begin{proof}
    Let $v$ be a vertex of degree $4$, whose initial weight is $6$. It cannot have four neighbors of degree $2$ by Proposition~\ref{prop:havetdisch}. Moreover, if it has three, the remaining vertex cannot be deficient by Lemma~\ref{lem:deg4}, so $v$ gives exactly $6$ weight. Otherwise, $v$ has at most $2$ neighbors of degree $2$, hence loses at most $2\times 2 + 2\times 1=6$, ending with non-negative weight.
\end{proof}

\begin{lemma}
     Vertices of degree $3$ have non-negative final weight.
\end{lemma}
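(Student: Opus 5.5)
A vertex $v$ of degree $3$ starts with weight $5\cdot 3-14=1$. I will bound what it gives away by splitting into cases according to the number of degree-$2$ neighbors of $v$. By Proposition~\ref{prop:havetdisch}, $v$ cannot have two or more neighbors of degree $2$, since a $3$-vertex with two neighbors of degree $2$ is forbidden. So $v$ has either zero or exactly one neighbor of degree $2$.

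\textbf{Case 1: exactly one neighbor of degree $2$.} Then $v$ is deficient. It gives $2$ to its degree-$2$ neighbor. By the rules, a deficient vertex gives nothing to deficient neighbors, since only non-deficient $3$-vertices and $4^+$-vertices give in the second and third rules. So $v$ loses exactly $2$. I must show it receives at least $1$ from its two other neighbors $x,y$, which have degree at least $3$.
\begin{itemize}
\item Any $4^+$-neighbor gives $v$ exactly $1$, which suffices.
\item Otherwise $x,y$ are both $3$-vertices. A non-deficient one gives $1/2$; a deficient one gives nothing.
\end{itemize}
By Lemma~\ref{lem:nondef}, $v$ has a non-deficient neighbor of degree at least $3$. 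This alone yields only $1/2$ when both $x,y$ are $3$-vertices, so the key check is whether both are non-deficient. I expect this to be the main obstacle: the case of one deficient and one non-deficient $3$-neighbor gives $v$ final weight $-1/2$. Re-reading the proof of Lemma~\ref{lem:nondef}, the assumption used there is that $v$ has two deficient neighbors $u,w$. So the lemma literally rules out $x$ and $y$ both being deficient (among $3^+$-neighbors), and not the mixed case. I would therefore first try to strengthen Lemma~\ref{lem:nondef} to forbid any deficient $3$-neighbor of a deficient vertex, i.e.\ to exclude a path $u v$ of two adjacent deficient vertices. The plan is to rerun the same counting argument with a single pair. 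Remove $u,v,u',v'$ and choose two candidates for $\mathbf{v}$ via Claim~\ref{cl:deg2}, extendable to $v'$. By Claim~\ref{cl:claim_disch}, at least $6$ vectors for $\mathbf u$ are compatible with one of them. Among these, one must avoid $\mathbf 0$ and the vector of the outer neighbor of $u'$, and also satisfy the edge constraint to $u$'s third neighbor $a$, which halves the count: $6$ gives $3$, and removing up to $2$ leaves at least $1$. The weak point is that Claim~\ref{cl:claim_disch} counts vectors satisfying one of two constraints, while the constraint from $a$ must hold simultaneously; that intersection count needs care.

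If the strengthening fails, the fallback is to adjust the discharging instead. I would have non-deficient $3$-vertices give $1$ rather than $1/2$, and then verify that non-deficient $3$-vertices can afford it.

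\textbf{Case 2: no neighbor of degree $2$.} Then $v$ is non-deficient and gives $1/2$ to each deficient neighbor. By Lemma~\ref{lem:deg3}, at most two of its neighbors are deficient, so $v$ loses at most $1$ and ends with weight at least $0$.
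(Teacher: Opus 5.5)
Your Case~2 and the $4^+$-neighbour sub-case of Case~1 are correct and match the paper's argument. The remaining case is a genuine gap, and you diagnose it correctly. If $v$ is deficient and its two $3^+$-neighbours are a deficient $3$-vertex and a non-deficient $3$-vertex, then $v$ ends with $1-2+\tfrac12=-\tfrac12$. Lemma~\ref{lem:nondef} only guarantees \emph{one} non-deficient $3^+$-neighbour (its proof assumes two deficient neighbours), so it does not exclude this case. The paper's proof passes over it by asserting that two degree-$3$ neighbours ``each give $1/2$''. That tacitly assumes neither is deficient, so the written argument has the same hole. Your proposal does not close it, however, so as it stands it does not prove the lemma.

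Neither of your repairs works as sketched.

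\emph{The strengthened lemma.} Intersecting the $6$ vectors from Claim~\ref{cl:claim_disch} with $\{\mathbf u:\mathbf u\cdot\mathbf a=\pi(ua)\}$ does not halve the count. A $6$-set and a $4$-element affine plane in $\mathbb{F}_2^3$ are only guaranteed to share $6+4-8=2$ vectors, and these can be exactly $\mathbf 0$ and $\mathbf b$ (the outer neighbour of $u'$). Concretely, let $e$ be the third neighbour of $v$ and $c$ the other neighbour of $v'$, and suppose the inversion of $G-\{u,v,u',v'\}$ has $\mathbf a=\mathbf c=(1,0,0)$ and $\mathbf b=\mathbf e=(0,1,0)$. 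Take $\pi(ua)=\pi(uv)=\pi(ve)=0$, $\pi(uu')\neq\pi(u'b)$ and $\pi(vv')\neq\pi(v'c)$. The admissible $\mathbf v$ are then exactly $(0,0,1)$ and $(1,0,1)$; $\mathbf c$ is excluded because it cannot be extended to $v'$. The only $\mathbf u$ with $\mathbf u\cdot\mathbf a=0$ that are orthogonal to one of them are $(0,0,0)$ and $(0,1,0)=\mathbf b$. Neither is usable: $\mathbf 0$ by strictness, and $\mathbf b$ because $\pi(uu')\neq\pi(u'b)$. So this inversion does not extend at all, and two adjacent deficient vertices are not reducible by this local count.

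\emph{The fallback.} Lemma~\ref{lem:deg3} only forbids three deficient neighbours. A non-deficient $3$-vertex with two deficient neighbours would therefore give $2$ out of its initial weight $1$ and end at $-1$.

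Finishing the proof needs a new ingredient, either a larger reducible configuration exploiting more of the surrounding structure or a redesigned set of rules. Neither your proposal nor the paper supplies one.
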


\begin{proof}
    Let $v$ be a vertex of degree $3$, whose initial weight is $1$. If $v$ is not deficient, then it has at most two deficient neighbors by Lemma~\ref{lem:deg3}, hence gives $2\times 1/2=1$ and ends up with $0$ weight.

    Assume now that $v$ is deficient, and denote by $w_1,w_2$ its two neighbors of degree at least $3$. If $w_1,w_2$ both have degree $3$, then they each give $1/2$ to $v$, which ends up with weight $1-2+2\times 1/2=0$. Otherwise, $w_1$ or $w_2$ has degree at least $4$, and gives $1$ to $v$, for a final weight of at least $1-2+1=0$.
\end{proof}

\section{Maximum degree bound for triangle-free graphs}\label{sec:triangle-free}

This section is devoted to the proof of \Cref{thm:main}, bounding $\diam$ in terms of the maximum degree $\Delta$ for triangle-free graphs.
Our proof relies on the linear algebra formulation of $\diam$ given by \Cref{obs:main}.
We will first need the following simple lemma.
\begin{lemma}\label{lem:main-aux}
    Let $\mathbf{x}\in\mathbb{F}_2^t$ and $\mathbf{y}+E$ be an affine subspace of $\mathbb{F}_2^t$. 

    If $\mathbf{x}$ is not orthogonal to $E$, then one can find two vectors $\mathbf{y_0},\mathbf{y_1}\in\mathbf{y}+E$ such that $\mathbf{x\cdot y_i}=i$ for $i=0,1$. 
\end{lemma}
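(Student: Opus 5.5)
The idea is to exploit that the map $\mathbf{z}\mapsto \mathbf{x}\cdot\mathbf{z}$ is an $\mathbb{F}_2$-linear form on $\mathbb{F}_2^t$, so its restriction to the affine subspace $\mathbf{y}+E$ is an affine function. We will show that this affine function is non-constant. A non-constant affine function to $\mathbb{F}_2$ takes both values, which gives $\mathbf{y_0}$ and $\mathbf{y_1}$.

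First, use the hypothesis that $\mathbf{x}$ is not orthogonal to $E$. This yields some $\mathbf{e}\in E$ with $\mathbf{x}\cdot\mathbf{e}=1$.

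Next, consider the two vectors $\mathbf{y}$ and $\mathbf{y}+\mathbf{e}$. Both lie in $\mathbf{y}+E$. By bilinearity of the dot product,
\[
\mathbf{x}\cdot(\mathbf{y}+\mathbf{e})=\mathbf{x}\cdot\mathbf{y}+1 .
\]
So these two vectors have distinct dot products with $\mathbf{x}$, and hence one has value $0$ and the other value $1$. Set $\mathbf{y_i}$ to be the one with $\mathbf{x}\cdot\mathbf{y_i}=i$. Explicitly, $\mathbf{y_0}=\mathbf{y}+(\mathbf{x}\cdot\mathbf{y})\,\mathbf{e}$ and $\mathbf{y_1}=\mathbf{y}+(1+\mathbf{x}\cdot\mathbf{y})\,\mathbf{e}$.

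There is no real obstacle. The only point needing care is reading ``not orthogonal to $E$'' as the existence of an $\mathbf{e}\in E$ with $\mathbf{x}\cdot\mathbf{e}\neq 0$, which in $\mathbb{F}_2$ means $\mathbf{x}\cdot\mathbf{e}=1$. Note also that $\mathbf{x}$ need not be nonzero a priori; the hypothesis already guarantees it.
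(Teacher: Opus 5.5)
Your proposal is correct and is essentially the paper's proof: pick $\mathbf{e}\in E$ with $\mathbf{x}\cdot\mathbf{e}=1$ and note that $\mathbf{y}$ and $\mathbf{y}+\mathbf{e}$ have different dot products with $\mathbf{x}$. Your explicit formulas for $\mathbf{y_0}$ and $\mathbf{y_1}$ are a harmless extra that checks out.
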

\begin{proof}
    Since $\mathbf{x}$ is not orthogonal to $E$, there exists $\mathbf{x'}\in E$ such that $\mathbf{x\cdot x'}=1$. In particular, the dot products $\mathbf{x\cdot y}$ and $\mathbf{x}\cdot (\mathbf{x'}+\mathbf{y})$ have different values, which concludes.  
\end{proof}

We are now equipped to show our bound on $\diam$ in terms of the maximum degree $\Delta$.

\thmtrianglefree*

\begin{proof}
Let $G$ be a triangle-free graph of maximum degree $\Delta$. Let $t=\Delta+\lfloor\log\Delta\rfloor$. Using Observation~\ref{obs:main}, we prove Theorem~\ref{thm:main} by inductively constructing a $t$-inversion for every 2-edge-coloring $\pi$ of $G$ from a $t$-inversion of a graph with fewer vertices. Assume that Theorem~\ref{thm:main} is verified by all graphs with fewer vertices than $G$ (the base case being trivial). Let $\pi: E(G)\to\mathbb{F}_2$, and $u$ be a vertex of $G$, with neighbors $v_1,\ldots, v_k$ ($k\leq\Delta$). 

By hypothesis, $G\setminus\{u\}$ satisfies Theorem~\ref{thm:main}, hence one can find a valid $t$-inversion for $\pi|_{E(G\setminus\{u\})}$. Our plan is to modify the vectors $\mathbf{v_1},\ldots,\mathbf{v_k}$ so that we can find a vector $\mathbf{u}$ for $u$ satisfying the constraints given by the edges incident with $u$. 

Denote by $V_i$ the set of possible choices for $\mathbf{v_i}$ in $G\setminus\{u\}$, having decided all vector assignments in $G\setminus N[u]$. Observe that $V_i$ is non-empty, and is precisely the set of solutions of an affine system of (at most) $\Delta-1$ equations on $t$ unknowns (and this system does not depend on any $\mathbf{v_j}$ for $j\neq i$ since $G$ is triangle-free). By the Rouché-Capelli theorem, $V_i$ is an affine space of the form $\mathbf{v_i}+E_i$, where $E_i$ is a vector space of dimension at least $t-\Delta+1$.

According to~\Cref{lem:main-aux}, if we fix a vector $\mathbf{u}$ not orthogonal to $E_i$, then we can find $\mathbf{v'_i}\in\mathbf{v_i}+E_i$ such that $\mathbf{u\cdot v'_i}=\pi(uv_i)$. In particular, if we choose $\mathbf{u}$ outside of the union of the orthogonals of each $E_i$, we will be able afterwards to replace for $i\in [k]$, $\mathbf{v_i}$ by some $\mathbf{v'_i}\in V_i$ such that $f(uv_i)=\mathbf{u\cdot v'_i}$. This new assignment would then be a valid assignment for $G$, which will conclude the proof. 

Now, as the orthogonal ${E_i}^\perp$ of $E_i$ has dimension at most $\Delta-1$ for every $1\le i\le k$, the union $\bigcup_{i=1}^k {E_i}^\perp$ contains at most $k\cdot 2^{\Delta-1}\le 2^{\Delta+\log\Delta-1}<2^t$ vectors. In particular, we may choose $\mathbf{u}$ outside of this set, as required.
\end{proof}

\section{Subdivisions}\label{sec:subd}

We say that $H$ is a \emph{subdivision} of $G$ if it can be obtained from $G$ by replacing every edge by a path on at least 2 vertices (the lengths of the paths may be different for different edges). The goal of this section is to prove Theorem~\ref{thm:subd}, which we restate below.

\thmsubd*

\begin{proof}
Let $H$ be a subdivision of $G$, such that $V(G) \subseteq V(H)$, and each edge $uv \in E(G)$ is replaced by a path of length at least one. For each such path $(u,\ldots,v)$ in $H$, we let $u'$ be the neighbor of $u$ and $v'$ be the neighbor of $v$ (possibly with $u'=v,v'=u$ when $uv$ is not subdivided, and $v'=u'$ when it is subdivided once).
We use the point of view given by Observation~\ref{obs:norm}, by considering an arbitrary $2$-edge-coloring $\pi$ of $H$, and showing that $\|\pi\| = \diam(H)$ is bounded. 

Let us say that a 2-edge-coloring of $H$ is \emph{good} if
 for each $uv \in E(G)$, $uu'$ and $v'v$ are of the same color.
The following claim shows how to obtain a \emph{good} 2-edge-coloring from $\pi$.
\begin{claim}
    \label{claim:subd1}
    With at most $\log(\diam(G))+3$ inversions, $\pi$ can be transformed into a good 2-edge-coloring $\pi'$ of $H$. 
\end{claim}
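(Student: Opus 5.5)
The plan is to fix the mismatches of $\pi$ using a proper coloring of $G$ with few colors, encoded in binary, so that each inversion handles one bit of the encoding. For every $uv\in E(G)$, the edge $uv$ is \emph{mismatched} if $\pi(uu')\neq\pi(v'v)$. If $uv$ is not subdivided, then $uu'$ and $v'v$ are the same edge and $uv$ is never mismatched. In every other case, $uu'$ and $v'v$ are distinct edges of $H$, so it suffices to flip exactly one of them and nothing else that matters for goodness.

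\emph{Step 1: $G$ has few colors.} Let $d=\diam(G)$. I will show that $G$ is $2d$-degenerate by counting. Any subgraph $G'$ of $G$ satisfies $\diam(G')\le d$, because a $2$-edge-coloring of $G'$ extends to one of $G$ and a $d$-inversion restricts. By \Cref{obs:main}, every one of the $2^{|E(G')|}$ colorings of $G'$ is of the form $uv\mapsto \mathbf{u}\cdot\mathbf{v}$ for some assignment $V(G')\to\mathbb{F}_2^d$, and there are at most $2^{d|V(G')|}$ such assignments. Hence $|E(G')|\le d|V(G')|$, so $G'$ has a vertex of degree at most $2d$. Consequently $\chi(G)\le 2d+1$. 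We fix a proper coloring $c:V(G)\to\{0,\dots,2d\}$ and write colors in binary on $m=\lceil\log(2d+1)\rceil\le \log d+3$ bits.

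\emph{Step 2: the inversions.} For each mismatched edge $uv$, the colors $c(u)\neq c(v)$ differ in some bit $j(uv)$. Let $x\in\{u,v\}$ be the endpoint having bit $j(uv)$ equal to $1$, and let $x'$ be its neighbor on the path ($u'$ or $v'$). For $j\in[m]$, let $X_j$ consist of all $y\in V(G)$ whose color has bit $j$ equal to $1$, together with the vertex $x'$ of every mismatched edge with $j(uv)=j$. We invert $X_1,\dots,X_m$.

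\emph{Step 3: verification.} This is the step needing care, mainly for once-subdivided edges where $u'=v'=w$. Take a subdivided edge $uv$ of $G$. A subdivision vertex on its path belongs to some $X_j$ only if it is the chosen $x'$ for that path, and then it belongs only to $X_{j(uv)}$. Therefore, on that path, only edges incident to $x'$ can flip, and only during the inversion of $X_{j(uv)}$. In that inversion, $xx'$ flips because both $x$ and $x'$ lie in $X_{j(uv)}$. The other end $y$ of the path has bit $j(uv)$ equal to $0$, so $y\notin X_{j(uv)}$. When $u'=v'$, this means the edge $yx'$ does not flip. When the path is longer, the next subdivision vertex is not in $X_{j(uv)}$, so that edge does not flip either. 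Hence each mismatched edge has exactly one of $uu'$, $v'v$ flipped, exactly once, and matched edges are untouched. Edges of $H$ that are also edges of $G$ may change color, but they are irrelevant to goodness. The result is a good coloring obtained with $m\le\log(\diam(G))+3$ inversions.

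I expect Step 1, bounding $\chi(G)$ linearly in $\diam(G)$, to be the main point; the counting argument via \Cref{obs:main} together with monotonicity of $\diam$ under subgraphs should settle it.
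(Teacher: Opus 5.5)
Your proof is correct and follows the paper's argument. Like the paper, you take a proper coloring of $G$ with at most $2\diam(G)+1$ colors, encode it on $\lceil\log(2\diam(G)+1)\rceil$ bits, and fix each mismatched path by adding to the inversion of a distinguishing bit the subdivision neighbor of the endpoint whose bit is $1$. The one difference is that you derive the $2\diam(G)$-degeneracy yourself, by the counting bound $2^{|E(G')|}\le 2^{d|V(G')|}$ over subgraphs, whereas the paper cites Theorem~3.2 of Havet et al.\ for $\nabla_0\le 2\diam(G)$.
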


\begin{proofof}{the claim}
    Letting $\nabla_0$ for the greatest average degree of $G$, a standard degeneracy argument ensures that $G$ is $(\nabla_0+1)$-colorable.
    It was shown in~\cite{havet} (Theorem 3.2) that $\nabla_0\leq 2\diam(G)$, meaning we may have a proper vertex-coloring of $G$ with $k \leq 2 \diam (G) + 1$ colors, and in particular $k \leq 4 \diam(G)$. 
    We take such a coloring $\phi$, labeling the $k$ colors with distinct vectors of $\mathbb{F}_2^\ell$, with $\ell = \lceil \log(k) \rceil \leq \log(k)+1 \leq \log(\diam(G))+3$. We are now ready to define the sets  $X_1,...,X_{\ell}$ for the $\ell$ inversions.

    For any $i\in[\ell]$, we begin by including in $X_i$ all vertices $v\in V(G)$ such that the $i$-th coordinate of $\phi(v)$ is 1.
    Then, for any edge $(u,v) \in E(G)$ such that $\pi(uu') \neq \pi(vv')$, we choose a (single) index $i \leq \ell$ such that exactly one of $u$ and $v$ belongs to $X_i$ (which is possible since $\phi(u) \neq \phi(v)$).
    Then, we add $u'$ (resp. $v'$) to $X_i$ if $u\in X_i$ (resp. $v\in X_i$).
    Applying inversions $(X_i)_{i\in [\ell]}$, observe that for every edge $uv$ of $G$, either $\pi(uu')=\pi(vv')$ and $uu',vv'$ are never inverted, or exactly one edge among $uu',vv'$ gets recolored (exactly once), which yields a good edge-coloring.
\end{proofof}

We now consider $(H,\pi')$ obtained from the previous claim.
Then, consider the subgraph of $H$ induced by the subdivision vertices ($V(H) \setminus V(G)$), and note that it induces a (path) forest, and therefore has inversion diameter at most $2$ by~\cite[Theorem 4.2]{havet}. We may thus apply at most two inversions to $(H, \pi')$ to yield some $(H,\pi'')$, where each subdivision path is monochromatic, meaning $\|\pi'\| \leq \| \pi''\| + 2$, and in turn $\| \pi \| \leq \|\pi''\| +  \log(\diam(G)) + 5$.

Now, let $\hat{\pi}$ be the 2-edge-coloring of $G$ associating to each $uv \in E(G)$ the color of $(u,u',\ldots,v',v)$ in $(H,\pi'')$.
Then, we claim $\| \pi'' \| \leq \|\hat{\pi}\|$, because any inversion sequence $(X_i)_{i\in[\ell]}$ of $(G,\hat{\pi})$ ending in a $0$-coloring of $G$ may be extended to $(H,\pi'')$. Indeed, it suffices to augment the sets $(X_i)_{i\in[\ell]}$ as follows: for every $i$, and every $uv \in E(G)$ such that $\{u,v \} \in X_i$, add all subdivided vertices of $uv$ to $X_i$. Then, at step $i$, the augmented sequence on $H$ yields exactly the $2$-edge-colored subdivision of $G$ at step $i$.
We thus have $\|\pi''\| \leq \diam(G)$, which combined with our previous bound on $\pi$ yields $\|\pi\| \leq \diam(G) + \log(\diam(G)) + 5$, concluding the proof.
\end{proof}

We note that if $H$ is obtained from $G$ by subdividing each edge at least once, a similar strategy yields a bound of $\log(\diam(G))+O(1)$, by short-circuiting the last part of the proof of \Cref{thm:subd}.

\bibliographystyle{plain}

\bibliography{biblio}
\end{document}